\def\K{\mathcal{K}}
\def\liminf{\mathop{{\rm lim}\,{\rm inf}}}
\def\tto{\;{\lower 1pt \hbox{$\rightarrow$}}\kern -10pt
\hbox{\raise 2pt \hbox{$\rightarrow$}}\;}
\def\e{\epsilon}
\def\ra{\rangle}
\def\la{\langle}
\def\R{I\!\!R}
\def\N{I\!\!N}
\def\ox{\bar{x}}
\def\oz{\bar{z}}
\def\ovr{\bar{r}}
\def\epi{\mbox{\rm epi}\,}
\def\hypo{\mbox{\rm hypo}\,}
\def\dom{\mbox{\rm dom}\,}
\def\spa{\mbox{\rm span}\,}
\def\cor{\mbox{\rm core}\,}
\def\acl{\mbox{\rm acl}\,}
\def\lina{\mbox{\rm lina}\,}
\def \N{I\!\!N}
\newtheorem{theorem}{Theorem}[section]
\newtheorem{proposition}{Proposition}[section]
\newtheorem{corollary}{Corollary}[section]
\newtheorem{lemma}{Lemma}[section]
\newtheorem{remark}{Remark}[section]
\newtheorem{example}{Example}[section]
\newtheorem{definition}{Definition}[section]
\numberwithin{equation}{section}
\renewcommand{\theequation}{\thesection.\arabic{equation}}
\let\pdfoutput=\undefined\fi
\chardef\@x10\chardef\@xv60
\def\tcitime{
\def\@time{%
  \@minute\time\@hour\@minute\divide\@hour\@xv
  \ifnum\@hour<\@x 0\fi\the\@hour:%
  \multiply\@hour\@xv\advance\@minute-\@hour
  \ifnum\@minute<\@x 0\fi\the\@minute
  }}%
\def\x@hyperref#1#2#3{%
   % Turn off various catcodes before reading parameter 4
   \catcode`\~ = 12
   \catcode`\$ = 12
   \catcode`\_ = 12
   \catcode`\# = 12
   \catcode`\& = 12
   \y@hyperref{#1}{#2}{#3}%
}
\def\y@hyperref#1#2#3#4{%
   #2\ref{#4}#3
   \catcode`\~ = 13
   \catcode`\$ = 3
   \catcode`\_ = 8
   \catcode`\# = 6
   \catcode`\& = 4
}
\def\QCTOpt[#1]#2{%
  \def\QCTOptB{#1}
  \def\QCTOptA{#2}
}
\def\QCTNOpt#1{%
  \def\QCTOptA{#1}
  \let\QCTOptB\empty
}
\def\Qct{%
  \@ifnextchar[{%
    \QCTOpt}{\QCTNOpt}
}
\def\QCBOpt[#1]#2{%
  \def\QCBOptB{#1}%
  \def\QCBOptA{#2}%
}
\def\QCBNOpt#1{%
  \def\QCBOptA{#1}%
  \let\QCBOptB\empty
}
\def\Qcb{%
  \@ifnextchar[{%
    \QCBOpt}{\QCBNOpt}%
}
\def\PrepCapArgs{%
  \ifx\QCBOptA\empty
    \ifx\QCTOptA\empty
      {}%
    \else
      \ifx\QCTOptB\empty
        {\QCTOptA}%
      \else
        [\QCTOptB]{\QCTOptA}%
      \fi
    \fi
  \else
    \ifx\QCBOptA\empty
      {}%
    \else
      \ifx\QCBOptB\empty
        {\QCBOptA}%
      \else
        [\QCBOptB]{\QCBOptA}%
      \fi
    \fi
  \fi
}
\def\GRAPHICSPS#1{%
 \ifcase\GRAPHICSTYPE%\GRAPHICSTYPE=0
   \special{ps: #1}%
 \or%\GRAPHICSTYPE=1
   \special{language "PS", include "#1"}%
%%%\or%\GRAPHICSTYPE=2
%%%  #1%
 \fi
}%
\def\graffile#1#2#3#4{%
%%% \ifnum\GRAPHICSTYPE=\tw@
%%%  %Following if using psfig
%%%  \@ifundefined{psfig}{\input psfig.tex}{}%
%%%  \psfig{file=#1, height=#3, width=#2}%
%%% \else
  %Following for all others
  % JCS - added BOXTHEFRAME, see below
    \bgroup
	   \@inlabelfalse
       \leavevmode
       \@ifundefined{bbl@deactivate}{\def~{\string~}}{\activesoff}%
        \raise -#4 \BOXTHEFRAME{%
           \hbox to #2{\raise #3\hbox to #2{\null #1\hfil}}}%
    \egroup
}%
\def\draftbox#1#2#3#4{%
 \leavevmode\raise -#4 \hbox{%
  \frame{\rlap{\protect\tiny #1}\hbox to #2%
   {\vrule height#3 width\z@ depth\z@\hfil}%
  }%
 }%
}%
\let\nographics=\@msidraft
\newif\ifwasdraft
\def\GRAPHIC#1#2#3#4#5{%
   \ifnum\@msidraft=\@ne\draftbox{#2}{#3}{#4}{#5}%
   \else\graffile{#1}{#3}{#4}{#5}%
   \fi
}
\def\addtoLaTeXparams#1{%
    \edef\LaTeXparams{\LaTeXparams #1}}%
\newif\ifBoxFrame \BoxFramefalse
\newif\ifOverFrame \OverFramefalse
\newif\ifUnderFrame \UnderFramefalse
\def\BOXTHEFRAME#1{%
   \hbox{%
      \ifBoxFrame
         \frame{#1}%
      \else
         {#1}%
      \fi
   }%
}
\def\doFRAMEparams#1{\BoxFramefalse\OverFramefalse\UnderFramefalse\readFRAMEparams#1\end}%
\def\readFRAMEparams#1{%
 \ifx#1\end%
  \let\next=\relax
  \else
  \ifx#1i\dispkind=\z@\fi
  \ifx#1d\dispkind=\@ne\fi
  \ifx#1f\dispkind=\tw@\fi
  \ifx#1t\addtoLaTeXparams{t}\fi
  \ifx#1b\addtoLaTeXparams{b}\fi
  \ifx#1p\addtoLaTeXparams{p}\fi
  \ifx#1h\addtoLaTeXparams{h}\fi
  \ifx#1X\BoxFrametrue\fi
  \ifx#1O\OverFrametrue\fi
  \ifx#1U\UnderFrametrue\fi
  \ifx#1w
    \ifnum\@msidraft=1\wasdrafttrue\else\wasdraftfalse\fi
    \@msidraft=\@ne
  \fi
  \let\next=\readFRAMEparams
  \fi
 \next
 }%
\def\IFRAME#1#2#3#4#5#6{%
      \bgroup
      \let\QCTOptA\empty
      \let\QCTOptB\empty
      \let\QCBOptA\empty
      \let\QCBOptB\empty
      #6%
      \parindent=0pt
      \leftskip=0pt
      \rightskip=0pt
      \setbox0=\hbox{\QCBOptA}%
      \@tempdima=#1\relax
      \ifOverFrame
          % Do this later
          \typeout{This is not implemented yet}%
          \show\HELP
      \else
         \ifdim\wd0>\@tempdima
            \advance\@tempdima by \@tempdima
            \ifdim\wd0 >\@tempdima
               \setbox1 =\vbox{%
                  \unskip\hbox to \@tempdima{\hfill\GRAPHIC{#5}{#4}{#1}{#2}{#3}\hfill}%
                  \unskip\hbox to \@tempdima{\parbox[b]{\@tempdima}{\QCBOptA}}%
               }%
               \wd1=\@tempdima
            \else
               \textwidth=\wd0
               \setbox1 =\vbox{%
                 \noindent\hbox to \wd0{\hfill\GRAPHIC{#5}{#4}{#1}{#2}{#3}\hfill}\\%
                 \noindent\hbox{\QCBOptA}%
               }%
               \wd1=\wd0
            \fi
         \else
            \ifdim\wd0>0pt
              \hsize=\@tempdima
              \setbox1=\vbox{%
                \unskip\GRAPHIC{#5}{#4}{#1}{#2}{0pt}%
                \break
                \unskip\hbox to \@tempdima{\hfill \QCBOptA\hfill}%
              }%
              \wd1=\@tempdima
           \else
              \hsize=\@tempdima
              \setbox1=\vbox{%
                \unskip\GRAPHIC{#5}{#4}{#1}{#2}{0pt}%
              }%
              \wd1=\@tempdima
           \fi
         \fi
         \@tempdimb=\ht1
         %\advance\@tempdimb by \dp1
         \advance\@tempdimb by -#2
         \advance\@tempdimb by #3
         \leavevmode
         \raise -\@tempdimb \hbox{\box1}%
      \fi
      \egroup%
}%
\def\DFRAME#1#2#3#4#5{%
  \vspace\topsep
  \hfil\break
  \bgroup
     \leftskip\@flushglue
	 \rightskip\@flushglue
	 \parindent\z@
	 \parfillskip\z@skip
     \let\QCTOptA\empty
     \let\QCTOptB\empty
     \let\QCBOptA\empty
     \let\QCBOptB\empty
	 \vbox\bgroup
        \ifOverFrame 
           #5\QCTOptA\par
        \fi
        \GRAPHIC{#4}{#3}{#1}{#2}{\z@}%
        \ifUnderFrame 
           \break#5\QCBOptA
        \fi
	 \egroup
  \egroup
  \vspace\topsep
  \break
}%
\def\FFRAME#1#2#3#4#5#6#7{%
 %If float.sty loaded and float option is 'h', change to 'H'  (gp) 1998/09/05
  \@ifundefined{floatstyle}
    {%floatstyle undefined (and float.sty not present), no change
     \begin{figure}[#1]%
    }
    {%floatstyle DEFINED
	 \ifx#1h%Only the h parameter, change to H
      \begin{figure}[H]%
	 \else
      \begin{figure}[#1]%
	 \fi
	}
  \let\QCTOptA\empty
  \let\QCTOptB\empty
  \let\QCBOptA\empty
  \let\QCBOptB\empty
  \ifOverFrame
    #4
    \ifx\QCTOptA\empty
    \else
      \ifx\QCTOptB\empty
        \caption{\QCTOptA}%
      \else
        \caption[\QCTOptB]{\QCTOptA}%
      \fi
    \fi
    \ifUnderFrame\else
      \label{#5}%
    \fi
  \else
    \UnderFrametrue%
  \fi
  \begin{center}\GRAPHIC{#7}{#6}{#2}{#3}{\z@}\end{center}%
  \ifUnderFrame
    #4
    \ifx\QCBOptA\empty
      \caption{}%
    \else
      \ifx\QCBOptB\empty
        \caption{\QCBOptA}%
      \else
        \caption[\QCBOptB]{\QCBOptA}%
      \fi
    \fi
    \label{#5}%
  \fi
  \end{figure}%
 }%
\def\makeactives{
  \catcode`\"=\active
  \catcode`\;=\active
  \catcode`\:=\active
  \catcode`\'=\active
  \catcode`\~=\active
}
   \gdef\activesoff{%
      \def"{\string"}%
      \def;{\string;}%
      \def:{\string:}%
      \def'{\string'}%
      \def~{\string~}%
      %\bbl@deactivate{"}%
      %\bbl@deactivate{;}%
      %\bbl@deactivate{:}%
      %\bbl@deactivate{'}%
    }
\def\FRAME#1#2#3#4#5#6#7#8{%
 \bgroup
 \ifnum\@msidraft=\@ne
   \wasdrafttrue
 \else
   \wasdraftfalse%
 \fi
 \def\LaTeXparams{}%
 \dispkind=\z@
 \def\LaTeXparams{}%
 \doFRAMEparams{#1}%
 \ifnum\dispkind=\z@\IFRAME{#2}{#3}{#4}{#7}{#8}{#5}\else
  \ifnum\dispkind=\@ne\DFRAME{#2}{#3}{#7}{#8}{#5}\else
   \ifnum\dispkind=\tw@
    \edef\@tempa{\noexpand\FFRAME{\LaTeXparams}}%
    \@tempa{#2}{#3}{#5}{#6}{#7}{#8}%
    \fi
   \fi
  \fi
  \ifwasdraft\@msidraft=1\else\@msidraft=0\fi{}%
  \egroup
 }%
\def\TEXUX#1{"texux"}
\def\limfunc#1{\mathop{\rm #1}}%
\long\def\QQQ#1#2{%
     \long\expandafter\def\csname#1\endcsname{#2}}%
\long\def\QQA#1#2{}%
\def\QTR#1#2{{\csname#1\endcsname {#2}}}%
\def\EXPAND#1[#2]#3{}%
\def\NOEXPAND#1[#2]#3{}%
\def\LaTeXparent#1{}%
\def\ChildStyles#1{}%
\def\ChildDefaults#1{}%
\def\QTagDef#1#2#3{}%
  \providecommand{\UNICODE}[2][]{\protect\rule{.1in}{.1in}}
  \providecommand{\U}[1]{\protect\rule{.1in}{.1in}}
\def\QQfnmark#1{\footnotemark}
 \def\abstract{%
  \if@twocolumn
   \section*{Abstract (Not appropriate in this style!)}%
   \else \small 
   \begin{center}{\bf Abstract\vspace{-.5em}\vspace{\z@}}\end{center}%
   \quotation 
   \fi
  }%
   \def\registered{\relax\ifmmode{}\r@gistered
                    \else$\m@th\r@gistered$\fi}%
 \def\r@gistered{^{\ooalign
  {\hfil\raise.07ex\hbox{$\scriptstyle\rm\text{R}$}\hfil\crcr
  \mathhexbox20D}}}}{}%
\newdimen\theight
\def\newfmtname{LaTeX2e}
  \DeclareOldFontCommand{\rm}{\normalfont\rmfamily}{\mathrm}
  \DeclareOldFontCommand{\sf}{\normalfont\sffamily}{\mathsf}
  \DeclareOldFontCommand{\tt}{\normalfont\ttfamily}{\mathtt}
  \DeclareOldFontCommand{\bf}{\normalfont\bfseries}{\mathbf}
  \DeclareOldFontCommand{\it}{\normalfont\itshape}{\mathit}
  \DeclareOldFontCommand{\sl}{\normalfont\slshape}{\@nomath\sl}
  \DeclareOldFontCommand{\sc}{\normalfont\scshape}{\@nomath\sc}
\def\alpha{{\Greekmath 010B}}%
\def\beta{{\Greekmath 010C}}%
\def\gamma{{\Greekmath 010D}}%
\def\delta{{\Greekmath 010E}}%
\def\epsilon{{\Greekmath 010F}}%
\def\zeta{{\Greekmath 0110}}%
\def\eta{{\Greekmath 0111}}%
\def\theta{{\Greekmath 0112}}%
\def\iota{{\Greekmath 0113}}%
\def\kappa{{\Greekmath 0114}}%
\def\lambda{{\Greekmath 0115}}%
\def\mu{{\Greekmath 0116}}%
\def\nu{{\Greekmath 0117}}%
\def\xi{{\Greekmath 0118}}%
\def\pi{{\Greekmath 0119}}%
\def\rho{{\Greekmath 011A}}%
\def\sigma{{\Greekmath 011B}}%
\def\tau{{\Greekmath 011C}}%
\def\upsilon{{\Greekmath 011D}}%
\def\phi{{\Greekmath 011E}}%
\def\chi{{\Greekmath 011F}}%
\def\psi{{\Greekmath 0120}}%
\def\omega{{\Greekmath 0121}}%
\def\varepsilon{{\Greekmath 0122}}%
\def\vartheta{{\Greekmath 0123}}%
\def\varpi{{\Greekmath 0124}}%
\def\varrho{{\Greekmath 0125}}%
\def\varsigma{{\Greekmath 0126}}%
\def\varphi{{\Greekmath 0127}}%
\def\nabla{{\Greekmath 0272}}
\def\FindBoldGroup{%
   {\setbox0=\hbox{$\mathbf{x\global\edef\theboldgroup{\the\mathgroup}}$}}%
}
\def\Greekmath#1#2#3#4{%
    \if@compatibility
        \ifnum\mathgroup=\symbold
           \mathchoice{\mbox{\boldmath$\displaystyle\mathchar"#1#2#3#4$}}%
                      {\mbox{\boldmath$\textstyle\mathchar"#1#2#3#4$}}%
                      {\mbox{\boldmath$\scriptstyle\mathchar"#1#2#3#4$}}%
                      {\mbox{\boldmath$\scriptscriptstyle\mathchar"#1#2#3#4$}}%
        \else
           \mathchar"#1#2#3#4% 
        \fi 
    \else 
        \FindBoldGroup
        \ifnum\mathgroup=\theboldgroup % For 2e
           \mathchoice{\mbox{\boldmath$\displaystyle\mathchar"#1#2#3#4$}}%
                      {\mbox{\boldmath$\textstyle\mathchar"#1#2#3#4$}}%
                      {\mbox{\boldmath$\scriptstyle\mathchar"#1#2#3#4$}}%
                      {\mbox{\boldmath$\scriptscriptstyle\mathchar"#1#2#3#4$}}%
        \else
           \mathchar"#1#2#3#4% 
        \fi     	    
	  \fi}
\newif\ifGreekBold  \GreekBoldfalse
\let\SAVEPBF=\pbf
\def\pbf{\GreekBoldtrue\SAVEPBF}%
  \newcounter{equationnumber}  
  \def\mathletters{%
     \addtocounter{equation}{1}
     \edef\@currentlabel{\theequation}%
     \setcounter{equationnumber}{\c@equation}
     \setcounter{equation}{0}%
     \edef\theequation{\@currentlabel\noexpand\alph{equation}}%
  }
    \def\BibTeX{{\rm B\kern-.05em{\sc i\kern-.025em b}\kern-.08em
                 T\kern-.1667em\lower.7ex\hbox{E}\kern-.125emX}}}{}%
\def\AmS{{\protect\usefont{OMS}{cmsy}{m}{n}%
                A\kern-.1667em\lower.5ex\hbox{M}\kern-.125emS}}}{}%
\def\@@eqncr{\let\@tempa\relax
    \ifcase\@eqcnt \def\@tempa{& & &}\or \def\@tempa{& &}%
      \else \def\@tempa{&}\fi
     \@tempa
     \if@eqnsw
        \iftag@
           \@taggnum
        \else
           \@eqnnum\stepcounter{equation}%
        \fi
     \fi
     \global\tag@false
     \global\@eqnswtrue
     \global\@eqcnt\z@\cr}
\def\TCItag{\@ifnextchar*{\@TCItagstar}{\@TCItag}}
\def\@TCItag#1{%
    \global\tag@true
    \global\def\@taggnum{(#1)}}
\def\@TCItagstar*#1{%
    \global\tag@true
    \global\def\@taggnum{#1}}
\def\ExitTCILatex{\makeatother }
\if@compatibility\message{amsmath already loaded}\fi\aftergroup\ExitTCILatex}
\if@compatibility\message{amstex already loaded}\fi\aftergroup\ExitTCILatex}
\if@compatibility\message{amsgen already loaded}\fi\aftergroup\ExitTCILatex}
\let\DOTSI\relax
\def\RIfM@{\relax\ifmmode}%
\def\FN@{\futurelet\next}%
\def\iint{\DOTSI\intno@\tw@\FN@\ints@}%
\def\iiint{\DOTSI\intno@\thr@@\FN@\ints@}%
\def\iiiint{\DOTSI\intno@4 \FN@\ints@}%
\def\idotsint{\DOTSI\intno@\z@\FN@\ints@}%
\def\ints@{\findlimits@\ints@@}%
\newif\iflimtoken@
\newif\iflimits@
\def\findlimits@{\limtoken@true\ifx\next\limits\limits@true
 \else\ifx\next\nolimits\limits@false\else
 \limtoken@false\ifx\ilimits@\nolimits\limits@false\else
 \ifinner\limits@false\else\limits@true\fi\fi\fi\fi}%
\def\multint@{\int\ifnum\intno@=\z@\intdots@                          %1
 \else\intkern@\fi                                                    %2
 \ifnum\intno@>\tw@\int\intkern@\fi                                   %3
 \ifnum\intno@>\thr@@\int\intkern@\fi                                 %4
 \int}%                                                               %5
\def\multintlimits@{\intop\ifnum\intno@=\z@\intdots@\else\intkern@\fi
 \ifnum\intno@>\tw@\intop\intkern@\fi
 \ifnum\intno@>\thr@@\intop\intkern@\fi\intop}%
\def\intic@{%
    \mathchoice{\hskip.5em}{\hskip.4em}{\hskip.4em}{\hskip.4em}}%
\def\negintic@{\mathchoice
 {\hskip-.5em}{\hskip-.4em}{\hskip-.4em}{\hskip-.4em}}%
\def\ints@@{\iflimtoken@                                              %1
 \def\ints@@@{\iflimits@\negintic@
   \mathop{\intic@\multintlimits@}\limits                             %2
  \else\multint@\nolimits\fi                                          %3
  \eat@}%                                                             %4
 \else                                                                %5
 \def\ints@@@{\iflimits@\negintic@
  \mathop{\intic@\multintlimits@}\limits\else
  \multint@\nolimits\fi}\fi\ints@@@}%
\def\intkern@{\mathchoice{\!\!\!}{\!\!}{\!\!}{\!\!}}%
\def\plaincdots@{\mathinner{\cdotp\cdotp\cdotp}}%
\def\intdots@{\mathchoice{\plaincdots@}%
 {{\cdotp}\mkern1.5mu{\cdotp}\mkern1.5mu{\cdotp}}%
 {{\cdotp}\mkern1mu{\cdotp}\mkern1mu{\cdotp}}%
 {{\cdotp}\mkern1mu{\cdotp}\mkern1mu{\cdotp}}}%
\def\RIfM@{\relax\protect\ifmmode}
\def\text{\RIfM@\expandafter\text@\else\expandafter\mbox\fi}
\let\nfss@text\text
\def\text@#1{\mathchoice
   {\textdef@\displaystyle\f@size{#1}}%
   {\textdef@\textstyle\tf@size{\firstchoice@false #1}}%
   {\textdef@\textstyle\sf@size{\firstchoice@false #1}}%
   {\textdef@\textstyle \ssf@size{\firstchoice@false #1}}%
   \glb@settings}
\def\textdef@#1#2#3{\hbox{{%
                    \everymath{#1}%
                    \let\f@size#2\selectfont
                    #3}}}
\newif\iffirstchoice@
\def\Let@{\relax\iffalse{\fi\let\\=\cr\iffalse}\fi}%
\def\vspace@{\def\vspace##1{\crcr\noalign{\vskip##1\relax}}}%
\def\multilimits@{\bgroup\vspace@\Let@
 \baselineskip\fontdimen10 \scriptfont\tw@
 \advance\baselineskip\fontdimen12 \scriptfont\tw@
 \lineskip\thr@@\fontdimen8 \scriptfont\thr@@
 \lineskiplimit\lineskip
 \vbox\bgroup\ialign\bgroup\hfil$\m@th\scriptstyle{##}$\hfil\crcr}%
\def\Sb{_\multilimits@}%
\def\endSb{\crcr\egroup\egroup\egroup}%
\def\Sp{^\multilimits@}%
\newdimen\ex@
\def\rightarrowfill@#1{$#1\m@th\mathord-\mkern-6mu\cleaders
 \hbox{$#1\mkern-2mu\mathord-\mkern-2mu$}\hfill
 \mkern-6mu\mathord\rightarrow$}%
\def\leftarrowfill@#1{$#1\m@th\mathord\leftarrow\mkern-6mu\cleaders
 \hbox{$#1\mkern-2mu\mathord-\mkern-2mu$}\hfill\mkern-6mu\mathord-$}%
\def\leftrightarrowfill@#1{$#1\m@th\mathord\leftarrow
\mkern-6mu\cleaders
 \hbox{$#1\mkern-2mu\mathord-\mkern-2mu$}\hfill
 \mkern-6mu\mathord\rightarrow$}%
\def\overrightarrow{\mathpalette\overrightarrow@}%
\def\overrightarrow@#1#2{\vbox{\ialign{##\crcr\rightarrowfill@#1\crcr
 \noalign{\kern-\ex@\nointerlineskip}$\m@th\hfil#1#2\hfil$\crcr}}}%
\def\overleftarrow{\mathpalette\overleftarrow@}%
\def\overleftarrow@#1#2{\vbox{\ialign{##\crcr\leftarrowfill@#1\crcr
 \noalign{\kern-\ex@\nointerlineskip}$\m@th\hfil#1#2\hfil$\crcr}}}%
\def\overleftrightarrow{\mathpalette\overleftrightarrow@}%
\def\overleftrightarrow@#1#2{\vbox{\ialign{##\crcr
   \leftrightarrowfill@#1\crcr
 \noalign{\kern-\ex@\nointerlineskip}$\m@th\hfil#1#2\hfil$\crcr}}}%
\def\underrightarrow{\mathpalette\underrightarrow@}%
\def\underrightarrow@#1#2{\vtop{\ialign{##\crcr$\m@th\hfil#1#2\hfil
  $\crcr\noalign{\nointerlineskip}\rightarrowfill@#1\crcr}}}%
\def\underleftarrow{\mathpalette\underleftarrow@}%
\def\underleftarrow@#1#2{\vtop{\ialign{##\crcr$\m@th\hfil#1#2\hfil
  $\crcr\noalign{\nointerlineskip}\leftarrowfill@#1\crcr}}}%
\def\underleftrightarrow{\mathpalette\underleftrightarrow@}%
\def\underleftrightarrow@#1#2{\vtop{\ialign{##\crcr$\m@th
  \hfil#1#2\hfil$\crcr
 \noalign{\nointerlineskip}\leftrightarrowfill@#1\crcr}}}%
\def\qopnamewl@#1{\mathop{\operator@font#1}\nlimits@}
\let\nlimits@\displaylimits
\def\setboxz@h{\setbox\z@\hbox}
\def\varlim@#1#2{\mathop{\vtop{\ialign{##\crcr
 \hfil$#1\m@th\operator@font lim$\hfil\crcr
 \noalign{\nointerlineskip}#2#1\crcr
 \noalign{\nointerlineskip\kern-\ex@}\crcr}}}}
 \def\rightarrowfill@#1{\m@th\setboxz@h{$#1-$}\ht\z@\z@
  $#1\copy\z@\mkern-6mu\cleaders
  \hbox{$#1\mkern-2mu\box\z@\mkern-2mu$}\hfill
  \mkern-6mu\mathord\rightarrow$}
\def\leftarrowfill@#1{\m@th\setboxz@h{$#1-$}\ht\z@\z@
  $#1\mathord\leftarrow\mkern-6mu\cleaders
  \hbox{$#1\mkern-2mu\copy\z@\mkern-2mu$}\hfill
  \mkern-6mu\box\z@$}
\def\projlim{\qopnamewl@{proj\,lim}}
\def\injlim{\qopnamewl@{inj\,lim}}
\def\varinjlim{\mathpalette\varlim@\rightarrowfill@}
\def\varprojlim{\mathpalette\varlim@\leftarrowfill@}
\def\varliminf{\mathpalette\varliminf@{}}
\def\varliminf@#1{\mathop{\underline{\vrule\@depth.2\ex@\@width\z@
   \hbox{$#1\m@th\operator@font lim$}}}}
\def\varlimsup{\mathpalette\varlimsup@{}}
\def\varlimsup@#1{\mathop{\overline
  {\hbox{$#1\m@th\operator@font lim$}}}}
\def\align{\@verbatim \frenchspacing\@vobeyspaces \@alignverbatim
You are using the "align" environment in a style in which it is not defined.}
\let\csname endalign*\endcsname =\endtrivlist
\def\alignat{\@verbatim \frenchspacing\@vobeyspaces \@alignatverbatim
You are using the "alignat" environment in a style in which it is not defined.}
\let\csname endalignat*\endcsname =\endtrivlist
\def\xalignat{\@verbatim \frenchspacing\@vobeyspaces \@xalignatverbatim
You are using the "xalignat" environment in a style in which it is not defined.}
\let\csname endxalignat*\endcsname =\endtrivlist
\def\gather{\@verbatim \frenchspacing\@vobeyspaces \@gatherverbatim
You are using the "gather" environment in a style in which it is not defined.}
\let\csname endgather*\endcsname =\endtrivlist
\def\multiline{\@verbatim \frenchspacing\@vobeyspaces \@multilineverbatim
You are using the "multiline" environment in a style in which it is not defined.}
\let\csname endmultiline*\endcsname =\endtrivlist
\def\arrax{\@verbatim \frenchspacing\@vobeyspaces \@arraxverbatim
You are using a type of "array" construct that is only allowed in AmS-LaTeX.}
\def\tabulax{\@verbatim \frenchspacing\@vobeyspaces \@tabulaxverbatim
You are using a type of "tabular" construct that is only allowed in AmS-LaTeX.}
\let\csname endarrax*\endcsname =\endtrivlist
\let\csname endtabulax*\endcsname =\endtrivlist
 \def\endequation{%
     \ifmmode\ifinner % FLEQN hack
      \iftag@
        \addtocounter{equation}{-1} % undo the increment made in the begin part
        $\hfil
           \displaywidth\linewidth\@taggnum\egroup \endtrivlist
        \global\tag@false
        \global\@ignoretrue   
      \else
        $\hfil
           \displaywidth\linewidth\@eqnnum\egroup \endtrivlist
        \global\tag@false
        \global\@ignoretrue 
      \fi
     \else   
      \iftag@
        \addtocounter{equation}{-1} % undo the increment made in the begin part
        \eqno \hbox{\@taggnum}
        \global\tag@false%
        $$\global\@ignoretrue
      \else
        \eqno \hbox{\@eqnnum}% $$ BRACE MATCHING HACK
        $$\global\@ignoretrue
      \fi
     \fi\fi
 } 
 \newif\iftag@ \tag@false
 \def\TCItag{\@ifnextchar*{\@TCItagstar}{\@TCItag}}
 \def\@TCItag#1{%
     \global\tag@true
     \global\def\@taggnum{(#1)}}
 \def\@TCItagstar*#1{%
     \global\tag@true
     \global\def\@taggnum{#1}}
     \def\tag{\@ifnextchar*{\@tagstar}{\@tag}}
     \def\@tag#1{%
         \global\tag@true
         \global\def\@taggnum{(#1)}}
     \def\@tagstar*#1{%
         \global\tag@true
         \global\def\@taggnum{#1}}
\def\dfrac#1#2{{\displaystyle {#1 \over #2}}}%
\def\ra{\rangle}
\def\la{\langle}
\def\la{\langle}
\def\ra{\rangle}
\begin{document}

\title{Algebraic Farkas Lemma and Strong Duality for Perturbed Conic Linear Programming 
%\thanks{%
%This research was partially supported by MINECO of Spain, Grant
%MTM2011-29064-C03-02 and by the project "A new approach to some classes of
%optimization problems", Vietnam National University - HCMC, Vietnam.}
 }
\author{Pham Duy Khanh\thanks{%
 Group of Analysis and Applied Mathematics, Department of Mathematics, Ho Chi Minh City
 University of Education, Ho Chi Minh City, Vietnam (khanhpd@hcmue.edu.vn). }\quad Vu Vinh Huy Khoa\thanks{%
 Department of Mathematics, Wayne State University, Detroit, Michigan, USA (khoavu@wayne.edu).}   \quad Tran Hong Mo\thanks{%
Ho Chi Minh City Open University,  Vietnam
(mo.th@ou.edu.vn).} }
\maketitle
\date{}

\noindent
{\small{\bf Abstract} This paper addresses the study of algebraic versions of Farkas lemma and strong duality results in the very broad setting of infinite-dimensional conic linear programming in dual pairs of vector spaces. To this end, purely algebraic properties of perturbed optimal value functions of both primal and dual problems and their corresponding hypergraph/epigraph are investigated. The newly developed hypergraphical/epigraphical sets, inspired by Kretschmer's  closedness conditions \cite{Kretschmer61}, together with their novel convex separation-type characterizations, give rise to various perturbed Farkas-type lemmas which allow us to derive complete characterizations of ``zero duality gap''. Principally, when certain structures of algebraic or topological duals are imposed, illuminating implications of the developed condition are also explored. \\[1ex]
{\bf Key words}. conic linear programming, dual pairs of vector spaces, Farkas lemma, strong duality, separation theorems \\[1ex]
{\bf Mathematics Subject Classification (2020)} 90C05, 90C46, 90C48, 46A20}

\section{Introduction}
\textit{Infinite-dimensional conic linear programming} arises in the literature of optimization as a natural extension of linear programming, carrying the existing setting to infinite-dimensional vector spaces while requiring a feasible point to lie within a convex cone - a generalization of the well-known nonnegative orthant in a Euclidean space emanating from linear programming. A variety of intriguing optimization problems can be expressed within this framework, including potential theory \cite{Yamasaki68}, continuous-time linear programming and continuous-time network flow \cite{AndersonNash87}, and approximation theory \cite{Krabs}. Unlike the classical linear programming theory, in the broader context of \textit{infinite-dimensional conic linear programming}, no comprehensive theory has been established, and efforts in duality theory, for example, have not yet been unified to the same degree.

The first attempt to study linear programming in more general settings of linear spaces dates back to Duffin \cite{Duffin56}, in which the primal space $X$ is a locally convex topological space while the dual space coincides with the topological dual of $X$. Kretschmer \cite{Kretschmer61} extends the exploration with his nominal work on duality theory and existence theorems within the broader framework of \textit{dual pair}, which aligns with the formulation presented in Yamasaki \cite{Yamasaki68,Yamasaki79}, and specifically, of our study. 

\textit{Farkas lemma} is a typical example of the very nice collection of \textit{theorems of the alternative} which, generally speaking, characterizes the consistency (or solvability) of systems of linear inequalities and their corresponding duals. The primary significance of \textit{Farkas lemma} in the theory of linear programming lies in its role in establishing the \textit{strong duality theorem}. A comprehensive review of Farkas lemma, duality theory and related topics can be found, e.g., from \cite{Anderson83,AndersonNash87,DinhJeyakumar} and the references therein. In greater detail, Shapiro \cite{Shapiro} employed the theory of convex conjugate duality and convex subdifferential pioneered by Rockafellar to present a novel approach to exploring ``zero duality gap'' and existence results via analyzing the conjugate of the objective functions. The reader is referred to Z\u alinescu \cite{Zalinescu08,Zalinescu23} for manifestation of techniques and results of the same manner, and further, several alternative proofs for some well-known existing results in the subject. Vinh et al. \cite{VinhKimTamYen16} proposes a thorough study on the so-called duality gap function, presenting various ``zero duality gap'' results based on their interior conditions. Further, Luan and Yen \cite{LuanYen23} addresses solution existence for both primal and dual problem, obtaining a Farkas-type result based on a generalized Slater condition in the setting of locally convex topological vector spaces. Quite recently, Khanh et al. \cite{KhanhMoTrinh} establishes necessary and sufficient conditions (weakest conditions) ensuring both the ``zero duality gap'' and solvability of the problems via their closedness-like conditions, stemming from the very classical \textit{closedness condition}; see Anderson and Nash \cite[Theorem~3.10]{AndersonNash87}. Many sufficient conditions guaranteeing the closedness condition can also be found from \cite{AndersonNash87}, e.g., interior point conditions, boundedness/compactness conditions.

Separating from the previously mentioned findings, the present article provides an in-depth analysis of numerous perturbed variants of Farkas lemma and strong duality theorem merely in the very general setting of dual pair of vector spaces without topological structures. We start with a detailed investigation on cost-perturbed versions of the primal and dual problem, whose optimal value functions are equipped with nicely behaved hypergraphical and epigraphical set, respectively. The constructions of these (hyper/epi)graphical sets are motivated by both the Kretschmer's closedness conditions and their links to strong duality theorems (see \cite{Kretschmer61} and \cite{KhanhMoTrinh}). Bonding connections as well as distinguished features between our innovative graphical sets and the well-known auxiliary sets from the theory of strong duality \cite{KhanhMoTrinh} are investigated and illustrated through the celebrated example of Gale. We also succeed in deriving characterizations of the epigraphical/hypergraphical sets in the language of convex separations in convex analysis (i.e., extension theorems in functional analysis). The ingredients collected grant us cutting-edge \textit{verifiable} necessary and sufficient conditions, which are later named conditions {\bf (D)} and {\bf (D$^*$)}, for the characteristic alternative arguments in the Farkas lemma, as well as the ``zero duality gap'', thus offering a unified perspective on the theory of the Farkas lemma and strong duality results. In the particular case of algebraic dual pairs $(X,X^*)$, we employ the notion of \textit{algebraic cores} of sets and basic algebraic separation theorems to provide sufficient conditions ensuring the validation of {\bf (D)} and {\bf (D$^*$)}. Last but not least, the equivalence between the latter and a closedness-type condition similar to that from \cite{KhanhMoTrinh} is assured when topological structures are finally equipped to the spaces in consideration.

The rest of the paper is organized as follows. Preliminaries on dual pairs of vector spaces and the standing formulations of our main problems are given in Section \ref{sec:prelim}. Section \ref{sec:perturbed} studies perturbed conic linear programming problems, motivates these perturbations, develops the associated optimal value functions, and introduces our verifiable necessary and sufficient condition via the former ingredients. Section \ref{sec:strongdual} is devoted to the study of Farkas lemma and strong duality theory under the newly proposed algebraic criterion. Section \ref{sec:algebra} and Section \ref{sec:topo} showcase respectively, an algebraic sufficient condition guaranteeing the mentioned criterion and its full characterization in the setting of topological vector spaces. The concluding Section~\ref{sec:con} summarizes the major contributions of the paper.

\section{Preliminaries}\label{sec:prelim}
The following facts involving dual pairs of vector spaces can be found from Schaefer \cite{Schaefer71}.

Given that $X$ and $Y$ are real vector spaces and $\la \cdot,\cdot\ra$ is a \textit{bilinear form} on $X\times Y$, i.e., $\la x,\cdot\ra$ is linear on $Y$ for any $x\in X$ and $\la \cdot,y\ra$ is linear on $X$ for any $y\in Y$. Then $(X,Y)$ is called a \textit{dual pair}. If in addition $\la \cdot,\cdot\ra$ satisfies the \textit{separation axioms}:

(i) $\la x_0,y\ra =0$ for all $y\in Y$ implies $x_0=0_X$,

(ii) $\la x,y_0\ra =0$ for all $x\in X$ implies $y_0=0_Y$,\\
then $(X,Y)$ is called a \textit{separated dual pair}. As an example, knowing that each vector space possesses a Hamel basis containing a given nonzero vector, one is able to show that $X$ together with its algebraic dual $X^*$ forms a separated dual pair. 

Let $(X, Y)$ and $(Z, W)$ be two separated dual pairs of vector spaces. For simplicity, the bilinear forms on $X$ and $Y$, and on $Z$ and $W$ will both be denoted by $\langle \cdot,\cdot\rangle$. Let $A$ be a linear map from $X$ to $Z$. The \textit{adjoint (transpose)} of $A$ is the linear map
$A^*:W\rightarrow Y$ defined via the condition
\begin{equation}\label{adjoint-pure}
\langle x, A^*w\rangle=\langle Ax,w\rangle\quad \forall x\in X,\;\forall w\in W.
\end{equation}
Let $P\subset X$ be a convex cone, i.e., $P$ is nonempty and $\alpha x + \beta y \in P$ for any $\alpha,\beta \ge 0$ and $x,y\in P$. The \textit{(positive) dual cone} of $P$ is defined as
$$
P^*:=\left\{y\in Y:\langle x,y\rangle\geq 0\quad \forall x\in P\right\}.
$$
Since $\la Y,X\ra$ is also a dual pair given that $\la X,Y\ra$ is, we may define the  \textit{(positive) bidual cone} of $P$ as follows:
\begin{equation}\label{P**}
    P^{**}:=\{x\in X:\la x,y\ra \ge 0 \quad \forall y\in P^*\}.
\end{equation}
We are interested in the following \textit{algebraic inequality constrained conic linear programming problem} and its dual (see \cite{Kretschmer61,Shapiro,Yamasaki68,Yamasaki79,Zalinescu78}): 
\begin{eqnarray}
    (\mathcal{P})&&\qquad \min\{\langle x,c\rangle:Ax-b\in Q, x\in P\}, \label{P}\\
    (\mathcal{D})&&\qquad \max\{\langle b,w\rangle:-A^*w+c\in P^*, w\in Q^*\}, \label{D}
\end{eqnarray}
where $P\subset X$ and $Q\subset Z$ are convex cones, and {$b$ and $c$ are given elements of $Z$ and $Y$, respectively.} 
There is also an \textit{equality constrained} form (see \cite{Anderson83}, \cite[Sect. 3.3]{AndersonNash87}, and \cite{VinhKimTamYen16}) of the problem ($\mathcal{P}$) as follows:
\begin{equation}(\mathcal{EP})\qquad\min\{\langle x,c\rangle:Ax-b=0_Z, x\in P\}, \label{EP}
\end{equation}
and its corresponding dual is  
\begin{equation}(\mathcal{ED})\qquad\max\{\langle b,w\rangle:-A^*w+c\in P^*, w\in W\}.\label{ED}
\end{equation}
{One can readily verify that either $(\mathcal{EP})$ or $(\mathcal{P})$ can be converted into the other. Indeed, problem $(\mathcal{P})$
becomes a program in the form $(\mathcal{EP})$ if $Q$ is the cone consisting only of the origin. Conversely, $(\mathcal{EP})$ can be written in the form $(\mathcal{P})$ by replacing $X$ with $X\times Z$, $P$
with $P \times Q$, and $A$ with the linear map sending $(x, z)$ to $Ax - z$. Thus there is no
loss of generality in dealing entirely with either $(\mathcal{EP})$ or $(\mathcal{P})$.
In what follows, we present results for both formulations.}

Let $F(\mathcal{P}):= \{x\in P: Ax - b \in Q\}$ and $F(\mathcal{D}) := \{w\in Q^*: -A^*w+c\in P^*\}$ denote the sets of feasible solutions of $(\mathcal{P})$  and $(\mathcal{D})$, respectively. Our standing assumption is that ($\mathcal{P}$) and ($\mathcal{D}$) are \textit{consistent}, i.e., $F(\mathcal{P})$ and $F(\mathcal{D})$ are nonempty. We use the notations 
\begin{equation*}
\begin{aligned}
\textup{val}(\mathcal{P})&:=\inf\{\la x,c\ra : Ax-b\in Q,x\in P\} \text{ and}\\
\textup{val}(\mathcal{D})&:=\sup\{\la b,w\ra: -A^*w+c\in P^*,w\in Q^*\}
\end{aligned}
\end{equation*}
for the \textit{optimal values} of ($\mathcal{P}$) and ($\mathcal{D}$), respectively. A result that assures the identity
\begin{equation}\label{strongdual}
   \text{val($\mathcal{P}$)}=\text{val($\mathcal{D}$)}
\end{equation}
is called a \textit{strong duality theorem}. The following \textit{weak duality} result can be found in Kretschmer \cite[Theorem~1]{Kretschmer61}.
\begin{theorem}\label{theo:weak-dual}{\rm {\bf (Weak duality)}} We always have \textup{val($\mathcal{P}$)}$\;\ge\;$\textup{val($\mathcal{D}$)}. If \textup{($\mathcal{P}$)} and \textup{($\mathcal{D}$)} are both consistent, then both values are finite. 
    
\end{theorem}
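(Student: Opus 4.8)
The plan is elementary and purely algebraic: I would first establish the pointwise inequality $\langle x,c\rangle \ge \langle b,w\rangle$ for every feasible pair $(x,w)\in F(\mathcal{P})\times F(\mathcal{D})$, then take the infimum in $x$ and the supremum in $w$ to obtain $\textup{val}(\mathcal{P})\ge\textup{val}(\mathcal{D})$, and finally deduce finiteness from the consistency hypothesis.

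For the pointwise step, fix $x\in F(\mathcal{P})$ and $w\in F(\mathcal{D})$, so that $x\in P$, $Ax-b\in Q$, $w\in Q^*$ and $c-A^*w\in P^*$. Splitting $c=(c-A^*w)+A^*w$ and invoking the defining property \eqref{adjoint-pure} of the adjoint, I would write
\begin{align*}
\langle x,c\rangle &= \langle x,\,c-A^*w\rangle + \langle x,\,A^*w\rangle = \langle x,\,c-A^*w\rangle + \langle Ax,\,w\rangle \\
&= \langle x,\,c-A^*w\rangle + \langle Ax-b,\,w\rangle + \langle b,w\rangle .
\end{align*}
The first summand is $\ge 0$ because $x\in P$ and $c-A^*w\in P^*$, and the second is $\ge 0$ because $Ax-b\in Q$ and $w\in Q^*$, in both cases by the very definition of a (positive) dual cone. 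Hence $\langle x,c\rangle\ge\langle b,w\rangle$.

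Since this inequality holds for all feasible $x$ and $w$, passing to the infimum over $x\in F(\mathcal{P})$ and the supremum over $w\in F(\mathcal{D})$ gives $\textup{val}(\mathcal{P})\ge\textup{val}(\mathcal{D})$ (this stays valid even if one of the feasible sets is empty, by the usual $\pm\infty$ conventions, although the consistent case is already handled). For the finiteness assertion, assuming $(\mathcal{P})$ and $(\mathcal{D})$ are both consistent, pick $x_0\in F(\mathcal{P})$ and $w_0\in F(\mathcal{D})$; then
\[
-\infty < \langle b,w_0\rangle \le \textup{val}(\mathcal{D}) \le \textup{val}(\mathcal{P}) \le \langle x_0,c\rangle < +\infty ,
\]
so both optimal values lie in $\R$. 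There is essentially no obstacle here — the proof is a single chain of inequalities — and the only points demanding care are the sign convention in the constraint $-A^*w+c\in P^*$ of $(\mathcal{D})$ and the fact that $\langle\cdot,\cdot\rangle$ stands for two distinct bilinear forms (one on $X\times Y$, one on $Z\times W$); in particular no topology, completeness, or separation argument is used.
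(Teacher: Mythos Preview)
Your argument is correct and is precisely the standard weak-duality computation; the paper itself does not supply a proof but simply cites Kretschmer \cite[Theorem~1]{Kretschmer61}, whose proof is exactly the chain of inequalities you wrote. Nothing to add.
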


As illustrated via Gale's example in \cite[Section~4.1]{VinhKimTamYen16}, one of the main differences between conic linear programming and linear programming is that even if both the primal problem \eqref{P} and the dual problem \eqref{D} are \textit{solvable} (i.e., each has at least one optimal solution), it may happen that val($\mathcal{P}$)$\;>\;$val($\mathcal{D}$).

To efficiently study the strong duality theory for ($\mathcal{P}$) and ($\mathcal{D}$), we take into account the following auxiliary sets whose equality-constrained-counterparts appeared for the first time in Khanh et al. \cite{KhanhMoTrinh}. Let the sets $\mathcal{H}\subset Z\times\R$ and $\mathcal{K}\subset Y\times\R$ be as follows:
\begin{equation}\label{Definition_H}
\mathcal{H}:=\displaystyle\bigcup_{x\,\in\, P}\big((Ax-b-Q)\times[\langle x,c\rangle,+\infty[\big),
\end{equation}
\begin{equation}\label{Definition_K}
\mathcal{K}:=\displaystyle\bigcup_{w\,\in\, Q^*}\big((-A^*w+c-P^*)\,\times\,]-\infty,\langle b,w\rangle]\big).
\end{equation}
The set $\mathcal{H}$ (resp. $\mathcal{K}$) carries the details of a feasible solution of the primal problem $(\mathcal{P})$ (resp. dual problem $(\mathcal{D})$), including the associated cost function value. Moreover, it is easy to check that 
\begin{equation}\label{valP2}
    \textup{val}(\mathcal{P}) = \inf \{r\in \R: (0_Z,r)\in \mathcal{H}\}
\end{equation}
and 
\begin{equation}\label{valD2}
     \textup{val}(\mathcal{D}) = \sup \{r\in \R: (0_Y,r)\in \mathcal{K}\}.
\end{equation}
Since $(-b-Q)\times\{0\}\subset \mathcal{H}$ and $$\mathcal{H}= B(P \times Q \times \R_+) - \{(b,0)\},$$ where
\begin{equation}\label{H-Pinter}
  B(x,z,\alpha):= \big(Ax-z,\la x,c\ra +\alpha\big), \; (x,z,\alpha)\in X\times Z\ \times \R,
\end{equation}
it follows readily that {$\mathcal{H}$ is nonempty and convex, as it is the image of a convex set under an affine transformation}. The same conclusion also holds for $\mathcal{K}$. In the literature, one is very likely to encounter (see \cite[Section~3]{Kretschmer61} and \cite{Yamasaki68,Yamasaki79}) the sets $\mathcal{H}'\subset Z\times \R$ and $\mathcal{K}'\subset Y\times \R$ defined as
\begin{equation}\label{H'K'}
\begin{aligned}
    \mathcal{H}'&:= \displaystyle\bigcup_{x\,\in\, P}\big((Ax-Q)\times[\langle x,c\rangle,+\infty[\big),\\
    \mathcal{K}'&:= \displaystyle\bigcup_{w\,\in\, Q^*}\big((-A^*w-P^*)\,\times\,]-\infty,\langle b,w\rangle]\big),
\end{aligned}
\end{equation}
which merely turn out to be translations of the sets $\mathcal{H}$ and $\mathcal{K}$ in consideration, respectively, via the relations
\begin{equation*}
\begin{aligned}
    \mathcal{H}'&=\mathcal{H}+\{(b,0)\},\; \text{ and }\; \mathcal{K}'=\mathcal{K}-\{(c,0)\}.
\end{aligned}
\end{equation*}
One of the most well-known sufficient conditions guaranteeing the solvability of ($\mathcal{P}$) as well as the \textit{strong duality result} \eqref{strongdual} is the \textit{closure condition of} $\mathcal{H}'$ from \eqref{H'K'}, see \cite[Corollary~3.1]{Kretschmer61}. In the literature, the \textit{interior point condition} is a more restrictive yet easier-to-verify requirement. When $Q=\{0_Z\}$, two renowned variants of this condition are outlined by Anderson and Nash \cite[Theorem~3.11~and~Theorem~3.12]{AndersonNash87}. The validations of the two stated theorems are intricately tied to the topological properties exhibited by the spaces in question. Notably, \cite[Theorem~3.12]{AndersonNash87} even necessitates the Banach structures of the spaces $X$ and $Z$. A natural question arises: \textit{Is it possible to construct a merely-algebraic sufficient condition for the strong duality \eqref{strongdual}}? We answer this question in the affirmative via condition {\bf (D)} defined in Section \ref{sec:strongdual} below. Roughly speaking, condition {\bf (D)} is shown to be equivalent to the fulfillment of strong duality \eqref{strongdual} for all possible values of $b$ appearing in the objective value function \eqref{D} of $(\mathcal{D})$, the latter of which is referred to as \textit{perturbed strong duality}. 

The next section is devoted to the study of perturbed optimal value functions on which condition {\bf (D)} and its dual counterpart {\bf (D$^*$)} are built.

\section{Algebraic perturbed conic linear programming}\label{sec:perturbed}
Recall from \cite[Theorem~4.4]{KhanhMoTrinh} that, within the framework of dual pairs of topological vector spaces $(X,Y)$ and $(Z,W)$, the condition 
\begin{equation*}
    \mathcal{H}\cap (\{0_Z\}\times \R) = \overline{\mathcal{H}} \cap (\{0_Z\} \times \R)
\end{equation*}
enforces the desired \textit{strong duality} result between $(\mathcal{P})$ and $(\mathcal{D})$. Here, $\overline{\mathcal{H}}$ stands for the closure of $\mathcal{H}$ with respect to the weak topology $\sigma(Z\times \R,W\times \R)$. Inspired by this, we raise the following question: ``\textit{How can we represent $\,\overline{\mathcal{H}}\,$, say, via a suitable set $\mathcal{N}$ within our present general framework of dual pairs of vector spaces, where no topological structure is assumed?''} Importantly, our choice of $\mathcal{N}$--and the ensuing purely algebraic duality results--must be consistent with $\overline{\mathcal{H}}$ and the existing literature once topologies are imposed, for instance with the results in \cite{KhanhMoTrinh}. Building on \cite[Proposition~4.1~and~Theorem~2.2]{KhanhMoTrinh}, it is natural to require 
\begin{equation*}
    \mathcal{N}\cap (\{0_Z\}\times \R) = \{0_Z\}\times \big[\textup{val}(\mathcal{D}),+\infty\big[.
\end{equation*}
Since this identity only encodes information about $\mathcal{N}$ on the slice $\{0_Z\}\times \R$, we henceforth replace $0_Z$ with an arbitrary $z\in Z$ and obtain 
\begin{equation}\label{eq:motivation-N}
    \mathcal{N}\cap (\{z\}\times \R) = \{z\}\times \Big[\sup\big\{\la b+z,w\ra: w\in F(\mathcal{D})\big\},+\infty\Big[.
\end{equation}
The right-hand side of \eqref{eq:motivation-N} is nonempty if and only if 
\begin{equation}\label{eq:z-indom}
\sup\big\{\la b+z,w\ra: w\in F(\mathcal{D})\big\}<\infty.
\end{equation}

Taking the union of \eqref{eq:motivation-N} for all $z$ satisfying \eqref{eq:z-indom}, we arrive at the definition
\begin{equation}\label{N}
    \mathcal{N} := \epi v_{\mathcal{D}},
\end{equation}
where 
\begin{equation}\label{Dy}
v_{\mathcal{D}}(z) := \sup\{\langle b + z, w\rangle : w\in F(\mathcal{D})\}\quad (z\in Z)
\end{equation}
is the optimal value function associated with the \textit{cost-function-perturbed} variant of $(\mathcal{D})$, in which the feasible set is kept fixed. Dually, let us define 
\begin{equation}\label{M}
    \mathcal{M} := \hypo v_{\mathcal{P}},
\end{equation}
where
\begin{equation}\label{Pz}
v_{\mathcal{P}}(y) := \inf\{\langle x, c-y\rangle : x\in F(\mathcal{P})\}\quad (y\in Y).
\end{equation}
The \textit{perturbed} optimal value functions $v_{\mathcal{D}}$ and $v_{\mathcal{P}}$, along with their associated (epi/hyper) --graphical sets $\mathcal{N}$ and $\mathcal{M}$, and the relationships linking 
$\mathcal{N},\mathcal{M}$ to $\mathcal{H},\mathcal{K}$, are the main objects of interest in this section.

First, the concavity of $v_{\mathcal{P}}$ and convexity of $v_{\mathcal{D}}$ are easily recognized. The effective domains $\limfunc{dom}v_{\mathcal{P}} := \{y \in Y : v_{\mathcal{P}}(y) > - \infty\}$ and $\limfunc{dom}v_{\mathcal{D}} := \{z \in Z :  v_{\mathcal{D}}(z) < +\infty\}$ are nonempty convex sets. {Due to the weak duality stated in Theorem \ref{theo:weak-dual}, we have $$v_{\mathcal{P}}(0_Y)=\text{val}(\mathcal{P})\quad \text{ and }\quad v_{\mathcal{D}}(0_Z)=\text{val}(\mathcal{D}),$$ 
these values are finite since $(\mathcal{P})$ and $(\mathcal{D})$ are consistent}. Further, $v_{\mathcal{P}}$ (resp. $v_{\mathcal{D}}$) is \textit{radially upper semicontinuous} (resp. \textit{radially lower semicontinuous}), i.e., upper semicontinuous (resp. lower semicontinuous) with respect to line segments. To be specific for $v_{\mathcal{D}}$, for every $\oz$ and $z\in Z$ it holds that $$v_{\mathcal{D}}(\oz)\le \liminf_{\lambda\rightarrow 0} v_{\mathcal{D}}\big((1-\lambda)\oz + \lambda z\big),$$ 
where the lower limit/limit inferior of an extended real-valued function is defined in \cite[Definition~1.16]{Penot}. This observation is claimed below.
\begin{proposition}\label{radiallycont}
The perturbed optimal value function $v_{\mathcal{P}}$ is radially upper semicontinuous, while $v_{\mathcal{D}}$ is radially lower semicontinuous.
\end{proposition}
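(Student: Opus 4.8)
The plan is to exploit the fact that each perturbed value function is an elementary extremum of a family of \emph{affine} functions, and that along any line segment such an extremum inherits the corresponding one-sided semicontinuity for free.

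First I would record the affine structure. For $v_{\mathcal{P}}$, note that for each fixed $x\in F(\mathcal{P})$ the map $y\mapsto\langle x,c-y\rangle=\langle x,c\rangle-\langle x,y\rangle$ is affine on $Y$; hence $v_{\mathcal{P}}=\inf_{x\in F(\mathcal{P})}\langle x,c-\cdot\,\rangle$ is a pointwise infimum of affine functions (in particular concave, and with values in $[-\infty,+\infty[$, since $F(\mathcal{P})\neq\emptyset$ forces $v_{\mathcal{P}}(y)\le\langle x_0,c-y\rangle<+\infty$ for any fixed $x_0\in F(\mathcal{P})$). Symmetrically, for each $w\in F(\mathcal{D})$ the map $z\mapsto\langle b+z,w\rangle$ is affine on $Z$, so $v_{\mathcal{D}}=\sup_{w\in F(\mathcal{D})}\langle b+\cdot\,,w\rangle$ is a pointwise supremum of affine functions (convex, with values in $]-\infty,+\infty]$).

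Next I would reduce to a one-dimensional statement. Fixing $\oy,y\in Y$, set $\psi(\lambda):=v_{\mathcal{P}}\big((1-\lambda)\oy+\lambda y\big)$ for $\lambda\in\R$. For each $x\in F(\mathcal{P})$ the function $\lambda\mapsto\langle x,c-((1-\lambda)\oy+\lambda y)\rangle$ is affine, hence continuous, on $\R$; therefore $\psi$, being the pointwise infimum of these continuous functions, is upper semicontinuous on $\R$ — for each $\alpha\in\R$ the set $\{\lambda:\psi(\lambda)<\alpha\}$ is a union of open sets. In particular $\psi(0)\ge\limsup_{\lambda\to0}\psi(\lambda)$, which is precisely radial upper semicontinuity of $v_{\mathcal{P}}$ at $\oy$ along the direction $y$. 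Running the dual argument with $\phi(\lambda):=v_{\mathcal{D}}\big((1-\lambda)\oz+\lambda z\big)$ — a pointwise supremum of affine (hence continuous) functions of $\lambda$, therefore lower semicontinuous on $\R$ — yields $\phi(0)\le\liminf_{\lambda\to0}\phi(\lambda)$, i.e. radial lower semicontinuity of $v_{\mathcal{D}}$, which is exactly the inequality displayed just before the proposition.

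I do not expect a genuine obstacle here; the only points needing a little care are the bookkeeping of infinite values — so that the infimum/supremum of affine functions is read in $\overline{\R}$ and the one-sided limits are interpreted as in \cite[Definition~1.16]{Penot} — and the elementary fact that an infimum of a family of real-valued continuous functions is upper semicontinuous (dually, a supremum is lower semicontinuous), which follows from the description of its strict sub-/super-level sets as unions of open sets. The standing consistency of $(\mathcal{P})$ and $(\mathcal{D})$ enters only to ensure $v_{\mathcal{P}}<+\infty$ and $v_{\mathcal{D}}>-\infty$ throughout, so that $\psi$ and $\phi$ are proper along every segment and the semicontinuity inequalities are non-vacuous.
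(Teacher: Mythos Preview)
Your proof is correct and follows essentially the same approach as the paper: both observe that $v_{\mathcal{P}}$ (resp.\ $v_{\mathcal{D}}$) is a pointwise infimum (resp.\ supremum) of affine functions, hence inherits radial upper (resp.\ lower) semicontinuity along every segment. Your write-up is simply a more detailed unpacking of the paper's one-line argument.
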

\begin{proof}
{Since $v_{\mathcal{D}}$ (resp. $v_{\mathcal{P}})$ is the supremum (resp. infimum) of radially continuous linear functionals, it follows that $v_{\mathcal{D}}$ (resp. $v_{\mathcal{P}})$ is itself radially lower (resp. upper) semicontinuous.}
\end{proof}

Next, we employ a parametric version of Gale's example (see \cite[Section~3.4.2]{AndersonNash87}) to illustrate the computation of the epigraphical set $\mathcal{N}$ from \eqref{N}. The celebrated Gale's example serves in the literature as an instance whence ``positive duality gap'' may happen even in the setting of finite-dimensional conic linear programming. Recently, \cite{KhanhMoTrinh} utilized it to examine whether a sufficient condition for strong duality, established in the same work, is necessary, while many perturbed versions of Gale's example are extensively studied in \cite{VinhKimTamYen16,Zalinescu23}.

\begin{example}\label{exam:Gale1}
\rm 
For real numbers $\alpha$ and $\beta$, we consider the parametric linear problem
$$
\left(\mathrm{PG}_{\alpha, \beta}\right) \quad \min \left\{x_0: x_0+\sum_{i=1}^{\infty} i x_i=\alpha, \sum_{i=1}^{\infty} x_i=\beta, x_i \geq 0, i=0,1,2, \ldots\right\} .
$$

The problem $\left(\mathrm{PG}_{\alpha, \beta}\right)$ is an example of an infinite-dimensional conic linear programming \eqref{P}. Indeed, we have $X:=\R^{(\mathbb{N})}$ consisting of real sequences with finitely many nonzero terms. It is well-known that the dual of $X$ is $Y:=\R^{\mathbb{N}}$, i.e., the space of all real sequences. $(X,Y)$ is then a separated dual pair with respect to the bilinear form
$$
\langle x, y\rangle:=\sum_{i=0}^{\infty} x_i y_i, \quad \forall x=\left(x_0, x_1, x_2, \ldots\right) \in X,\; \forall y=\left(y_0, y_1, y_2, \ldots\right) \in Y.
$$
On the other hand, consider the separated dual pair $(Z,W):=(\R^2,\R^2)$ with respect to the bilinear form
$$
\langle z, w\rangle:=z_1 w_1+z_2 w_2, \quad \forall z=\left(z_1, z_2\right) \in Z,\; \forall w=\left(w_1, w_2\right) \in W.
$$
In $X$, define the convex cone $P=\R_{+}^{(\mathbb{N})}:=\R^{(\mathbb{N})} \cap \R_{+}^{\mathbb{N}}$, where
$$
\R_{+}^{\mathbb{N}}:=\left\{x=\left(x_0, x_1, x_2, \ldots\right) \in \R^{\mathbb{N}}: x_i \geq 0, i=0,1,2, \ldots\right\}.
$$
Clearly, $P^* = \R_{+}^{\mathbb{N}}$. Let $Q:=\{0_Z\}$, $b:=(\alpha, \beta) \in Z$, and $c:=(1,0,0, \ldots) \in Y$. Let $A: X \rightarrow Z$ and its adjoint $A^*: W \rightarrow Y$ be the linear maps represented by the infinite matrices
$$
\begin{gathered}
A=\left(\begin{array}{lllll}
1 & 1 & 2 & 3 & \ldots \\
0 & 1 & 1 & 1 & \ldots
\end{array}\right), \\
A^*=A^T=\left(\begin{array}{lllll}
1 & 1 & 2 & 3 & \ldots \\
0 & 1 & 1 & 1 & \ldots
\end{array}\right)^T.
\end{gathered}
$$
{The dual problem to $\left(\mathrm{PG}_{\alpha, \beta}\right)$ admits the set of feasible solutions as
\begin{equation*}
\begin{aligned}
    F^*&=\big\{w=(w_1,w_2)\in \R^2 : -A^* w + c\in P^*,\, w\in Q^*\big\}\\
    &=\big\{(w_1,w_2)\in \R^2 : -w_1 + 1 \ge 0, -iw_1 - w_2 \ge 0,\, i=1,2,\ldots\big\}. 
\end{aligned}
\end{equation*}}

Now we explore the epigraphical set $\mathcal{N}=\bigcup_{z\,\in\, \mathrm{dom}\, v_{\mathcal{D}}}\big(\{z\}\times [v_{\mathcal{D}}(z),+\infty[\big)$, where {\begin{equation}\label{vDz}
\begin{aligned}
    v_{\mathcal{D}}(z) &= \sup\big\{\la b+z,w\ra : w\in F^*\}\\
    &= \sup\big\{\big\la (z_1 + \alpha,z_2+\beta), (w_1,w_2)\big\ra : (w_1,w_2)\in F^*\big\}\\
    &= \sup\Big\{ (z_1+\alpha)w_1 + (z_2+\beta)w_2 :  w_1 \leq 1, i w_1+w_2 \leq 0, i=1,2, \ldots\Big\}.
\end{aligned}
\end{equation}}
As inspired by \cite[Example~3.1]{KhanhMoTrinh}, we observe that $z=(z_1,z_2)\in \dom v_{\mathcal{D}}$ if and only if $z_1+\alpha \ge z_2+\beta\ge 0$. Indeed, suppose that $z=(z_1,z_2)\in \dom v_{\mathcal{D}}$. As $(0,-k)$ and $(-k,k)$ belong to the {feasible set of the perturbed dual problem in \eqref{vDz}} for all $k\in \N$, we have 
\begin{equation*}
    -k (z_2+\beta) \le v_{\mathcal{D}}(z)\quad \text{and}\quad -k(z_1+\alpha) + k(z_2+\beta) \le v_{\mathcal{D}}(z),
\end{equation*}
and thus $z_2+\beta \ge -\dfrac{v_{\mathcal{D}}(z)}{k}$ and $(z_1+\alpha)-(z_2+\beta)\ge -\dfrac{v_{\mathcal{D}}(z)}{k}$ for all $k\in \N$. Letting $k\rightarrow \infty$ in the previous estimates yields $z_1+\alpha \ge z_2+\beta \ge 0$. Conversely, assume that $z_1+\alpha \ge z_2+\beta \ge 0$ and consider any $(w_1,w_2)\in \R^2$ for which 
\begin{equation}\label{w}
w_1\le 1 \ \text{ and }\ iw_1 + w_2\le 0\ \text{ for all }\ i=1,2,3,\ldots
\end{equation}
If $w_1>0$, then letting $i\rightarrow \infty$ in \eqref{w} leads to an absurd assertion, and hence we must have $w_1 \le 0$. Further, by employing \eqref{w} and $z_1+\alpha \ge z_2+\beta \ge 0$ one gets 
\begin{equation}\label{upper-bd}
    (z_1+\alpha)w_1 + (z_2+\beta)w_2 \le (z_2+\beta)(w_1+w_2) \le 0=(z_1+\alpha)\cdot 0 + (z_2+\beta)\cdot 0.
\end{equation}
Since the feasible set in \eqref{vDz} is nonempty and \eqref{upper-bd} holds for any feasible vector $(w_1,w_2)$, we have $v_{\mathcal{D}}(z)$ is finite, i.e., $z\in \dom v_{\mathcal{D}}$. Another observation stemming from the above clarification is that $v_{\mathcal{D}}(z)=0$ for all $z\in \dom v_{\mathcal{D}}$. As a consequence, the explicit representation of $\mathcal{N}$ is given as
\begin{equation*}
\begin{aligned}
    \mathcal{N}=\bigcup_{z\,\in\, \mathrm{dom}\, v_{\mathcal{D}}}\big(\{z\}\times [v_{\mathcal{D}}(z),+\infty[\big) &= \{(z_1,z_2,t_3)\in \R^3:z_1+\alpha \ge z_2+\beta \ge 0,t_3\ge 0\} \\
    &= \big\{(t_1+t_2-\alpha,t_2-\beta,t_3):t_1,t_2,t_3\ge 0\big\}.
\end{aligned}
\end{equation*}
\end{example}

{It is noteworthy that $\mathcal{N}$ and $\mathcal{M}$ are closely linked to $\mathcal{H}$ and $\mathcal{K}$, respectively.}

\begin{proposition}\label{prop:MN}
We have the following assertions:
\begin{enumerate}
    \item[{\rm (i)}] $\mathcal{H}\subset \mathcal{N}$ and $\mathcal{K}\subset \mathcal{M}$,
    \item[{\rm (ii)}] $\mathcal{N}$ and $\mathcal{M}$ are both convex,
    \item[{\rm (iii)}] If $\textup{val}(\mathcal{P})$ is finite, then $\big(0_Z,\textup{val}(\mathcal{P})\big)\in \mathcal{N}$.
    If $\textup{val}(\mathcal{D})$ is finite, then $\big(0_Y,\textup{val}(\mathcal{D})\big)\in \mathcal{M}$.
\end{enumerate}
\end{proposition}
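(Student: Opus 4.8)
The plan is to prove the three items in order, concentrating essentially all the genuine content in part~(i); parts~(ii) and~(iii) will then follow immediately from observations already made before the statement, together with weak duality (Theorem~\ref{theo:weak-dual}).

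For (i) I would argue by a direct membership chase. Take $(z,r)\in\mathcal{H}$, so there are $x\in P$ and $q\in Q$ with $z=Ax-b-q$ and $r\ge\langle x,c\rangle$; the goal is to show $r\ge v_{\mathcal{D}}(z)$, which by definition of the supremum reduces to checking $\langle b+z,w\rangle\le r$ for each $w\in F(\mathcal{D})$. The key step is the inequality chain $\langle b+z,w\rangle=\langle Ax-q,w\rangle=\langle x,A^*w\rangle-\langle q,w\rangle\le\langle x,A^*w\rangle\le\langle x,c\rangle\le r$, where the first equality uses the adjoint relation~\eqref{adjoint-pure}, the middle inequality uses $q\in Q$, $w\in Q^*$, and the next one uses $x\in P$, $c-A^*w\in P^*$. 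Since $F(\mathcal{D})\ne\emptyset$ by consistency, $v_{\mathcal{D}}(z)>-\infty$, so taking the supremum over $w$ gives $v_{\mathcal{D}}(z)\le r$ and hence $(z,r)\in\epi v_{\mathcal{D}}=\mathcal{N}$. The inclusion $\mathcal{K}\subset\mathcal{M}$ is the mirror statement: for $(y,s)\in\mathcal{K}$, writing $c-y=A^*w+p$ with $w\in Q^*$, $p\in P^*$ and $s\le\langle b,w\rangle$, the analogous chain $\langle x,c-y\rangle=\langle Ax,w\rangle+\langle x,p\rangle\ge\langle Ax,w\rangle=\langle b,w\rangle+\langle Ax-b,w\rangle\ge\langle b,w\rangle\ge s$, valid for every $x\in F(\mathcal{P})$ by $x\in P$, $p\in P^*$ and $Ax-b\in Q$, $w\in Q^*$, yields $v_{\mathcal{P}}(y)\ge s$, i.e., $(y,s)\in\hypo v_{\mathcal{P}}=\mathcal{M}$.

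Part (ii) I would deduce from the fact, recorded just before Proposition~\ref{radiallycont}, that $v_{\mathcal{D}}$ is convex and $v_{\mathcal{P}}$ is concave: the epigraph of a convex function and the hypograph of a concave function are convex, so $\mathcal{N}$ and $\mathcal{M}$ are convex. If a self-contained argument is preferred, convexity of $v_{\mathcal{D}}$ is immediate from its being a pointwise supremum of the affine functions $z\mapsto\langle b,w\rangle+\langle z,w\rangle$, and dually for $v_{\mathcal{P}}$. For part (iii) I would use the identities $v_{\mathcal{D}}(0_Z)=\textup{val}(\mathcal{D})$ and $v_{\mathcal{P}}(0_Y)=\textup{val}(\mathcal{P})$ noted earlier, together with weak duality $\textup{val}(\mathcal{D})\le\textup{val}(\mathcal{P})$: if $\textup{val}(\mathcal{P})$ is finite then it is $\ge v_{\mathcal{D}}(0_Z)$, so $(0_Z,\textup{val}(\mathcal{P}))\in\epi v_{\mathcal{D}}=\mathcal{N}$, and symmetrically if $\textup{val}(\mathcal{D})$ is finite then $(0_Y,\textup{val}(\mathcal{D}))\in\hypo v_{\mathcal{P}}=\mathcal{M}$.

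The only place that calls for any care is the inequality chain in (i): one has to invoke~\eqref{adjoint-pure} in the correct direction and feed the cone inequalities to the right pairs of vectors. Beyond that the proposition is entirely routine, and I do not foresee a real obstacle.
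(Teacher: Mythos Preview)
Your proposal is correct and follows essentially the same route as the paper: for (i) the paper observes that $x$ is feasible for the $(b+z)$-perturbed primal and invokes weak duality to get $\alpha\ge\inf\{\langle x',c\rangle:Ax'-(b+z)\in Q,\,x'\in P\}\ge v_{\mathcal{D}}(z)$, which is exactly the inequality chain you unpack inline; for (ii) and (iii) the paper likewise appeals to the convexity/concavity of $v_{\mathcal{D}},v_{\mathcal{P}}$ and to weak duality, just as you do.
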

\begin{proof}
We only need to verify the inclusion $\mathcal{H}\subset \mathcal{N}$ in (i) since the other inclusion is justified in the same manner. To this end, let us fix $(z,\alpha)\in \mathcal{H}$ and then find by the definition of $\mathcal{H}$ some vectors $x\in P,z\in Q$ such that $Ax-(b+z)=0_Z\in Q$ and $\la x,c\ra \le \alpha$. By using $\alpha \ge \la x,c\ra$ and by mimicking the proof of the weak duality result in Theorem \ref{theo:weak-dual}, we get immediately the estimates
\begin{equation*}
\begin{aligned}
    \alpha &\ge \inf\{\la x',c\ra :x'\in P,Ax'-(b+z)\in Q\}\\ &\ge \sup \{\la b+z,w\ra : -A^*w+c\in P^*,w\in Q^*\} = v_{\mathcal{D}}(z),
\end{aligned}
\end{equation*}
which obviously yield $(z,\alpha)\in \epi (v_{\mathcal{D}}) =  \mathcal{N}$.

The convexity claimed in (ii) is deduced directly from the fact that $\mathcal{N}=\epi (v_{\mathcal{D}})$ where $v_{\mathcal{D}}$ is convex, while $\mathcal{M}=\hypo (v_{\mathcal{P}})$ and the function $v_{\mathcal{P}}$ is concave. The assertions in (iii) are direct consequences of \eqref{N}-\eqref{M} and the weak duality in Theorem \ref{theo:weak-dual}. 
\end{proof}

The following example shows that the set $\mathcal{H}$ is strictly contained in $\mathcal{N}$.

\begin{example}\rm 
Let us examine the parametric linear problem considered in Example~\ref{exam:Gale1}.
By \cite[Example~3.1]{KhanhMoTrinh} and our own Example \ref{exam:Gale1}, we know that
\begin{equation*}
    \mathcal{H} = \left\{(t_1-\alpha,-\beta,t_1+t_2): t_1, t_2\ge 0\right\} \bigcup \left\{(t_1+t_2-\alpha,t_2-\beta,t_3):t_1,t_3\ge 0, t_2>0\right\},
\end{equation*}
and 
\begin{equation*}
    \mathcal{N} = \left\{(t_1+t_2-\alpha,t_2-\beta,t_3): t_1,t_2,t_3\ge 0\right\}.
\end{equation*}
Clearly, we have $(m-\alpha,-\beta,m/2) \in \mathcal{N}\setminus \mathcal{H}$ for any $m> 0$.
\end{example}

Below, we give a full characterization of the set $\mathcal{N}$ from \eqref{N} (resp. $\mathcal{M}$ from \eqref{M}) via information from $\mathcal{H}$ \eqref{Definition_H} (resp. $\mathcal{K}$ \eqref{Definition_K}). The obtained characterizations provide us with \textit{separation arguments} for points away from $\mathcal{N}$ (or $\mathcal{M}$), which shed light on the topological intimate bonds between $\mathcal{H}$ and $\mathcal{N}$ (resp. $\mathcal{K}$ and $\mathcal{M}$) presented later in Section \ref{sec:topo}. {We shall need the following lemma in subsequent results.}

{\begin{lemma}\label{auxili}
Let $(X,Y)$ be a dual pair of vector spaces and let $C$ be a nonempty strict cone in $X$, i.e., $\beta c\in C$ for all vector $c\in C$ and scalar $\beta>0$. For all $y\in Y$ and $\alpha \in \R$, we have
\begin{equation*}
    \big[\la x,y\ra \ge \alpha\,\, \forall x\in C\big] \Longrightarrow \big[y\in C^* \text{ and } \alpha \le 0\big].
\end{equation*}
\end{lemma}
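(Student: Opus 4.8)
The plan is to use nothing about $C$ beyond its nonemptiness and its closure under multiplication by positive scalars; in particular neither convexity of $C$ nor $0_X\in C$ will be needed. Fix an arbitrary $x_0\in C$, which exists since $C\neq\emptyset$. By the hypothesis $\la x_0,y\ra\ge\alpha$, and since $C$ is a strict cone we also have $\beta x_0\in C$ for every $\beta>0$; applying the hypothesis to $\beta x_0$ and using bilinearity gives
$$\beta\,\la x_0,y\ra=\la\beta x_0,y\ra\ge\alpha\qquad\text{for all }\beta>0.$$

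The first step I would carry out is to deduce $y\in C^*$. Suppose, toward a contradiction, that $\la x_0,y\ra<0$ for some $x_0\in C$. Letting $\beta\to+\infty$ in the displayed inequality forces the left-hand side to tend to $-\infty$, which is impossible. Hence $\la x,y\ra\ge 0$ for every $x\in C$, i.e.\ $y\in C^*$ by the definition of the dual cone.

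The second step is to conclude $\alpha\le 0$. Having established $\la x_0,y\ra\ge 0$, I would now let $\beta\downarrow 0^{+}$ in the same displayed inequality: the left-hand side $\beta\la x_0,y\ra$ tends to $0$, whence $\alpha\le 0$. (Equivalently, $\alpha\le\inf_{\beta>0}\beta\la x_0,y\ra=0$ because the infimum of a nonnegative linear function of $\beta$ over $\beta>0$ is $0$.) This completes the argument.

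\textbf{Main obstacle.} There is essentially no difficulty here; the only point worth flagging is the one already used implicitly above, namely that one must not appeal to any property of $C$ other than nonemptiness and positive homogeneity — in particular the value of $\la\cdot,\cdot\ra$ at $0_X$ never enters the proof, so the statement is valid verbatim for strict cones that do not contain the origin.
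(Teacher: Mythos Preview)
Your argument is correct and is essentially the same as the paper's: both scale an arbitrary $x\in C$ by $\beta>0$, send $\beta\to\infty$ to rule out $\la x,y\ra<0$ (hence $y\in C^*$), and send $\beta\downarrow 0$ to conclude $\alpha\le 0$. The only cosmetic difference is the order in which the two conclusions are drawn.
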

\begin{proof}
Fix $y\in Y$ and $\alpha\in \R$ such that
\begin{equation}\label{22}
\la x,y\ra \ge \alpha \quad \forall x\in C.
\end{equation}
Pick any $x\in C$. Since $C$ is a strict cone, we get from \eqref{22} that $$\la \varepsilon x,y\ra \ge \alpha  \quad \text{for any} \quad \varepsilon>0.$$ 
By letting $\varepsilon \downarrow 0$, it follows that $\alpha \le 0$ as claimed. Regarding the other assertion, it suffices to show that $\la x,y\ra \ge 0$. Indeed, if $\la x,y\ra <0$, then one finds $M>0$ sufficiently large such that $Mx\in C$ and $\la Mx,y\ra < \alpha$, which clearly contradicts \eqref{22}. The proof is complete.
\end{proof}}

\begin{theorem}\label{contra} Consider the problems \eqref{P}-\eqref{D} and let $(z,r)\in Z\times \R$. The following assertions are equivalent:
\begin{enumerate}
\item[{\rm (i)}] $(z,r)\notin \mathcal{N}$,
\item[{\rm (ii)}] There exist $(w,\beta)\in W\times \R$ and $\gamma \in \R$ such that 
\begin{equation}\label{sepa-N}
    \big\la (z',r'),(w,\beta)\big\ra \le \gamma <\la (z,r),(w,\beta)\ra  \quad \forall (z',r')\in \mathcal{H},
\end{equation}
\item[{\rm (iii)}] There exists $(w,\beta)\in W\times \R$ such that 
\begin{equation}\label{equalemmaKM2'}
    \big\la (z',r')-(z,r),(w,\beta)\big\ra <0 \quad \forall (z',r')\in \mathcal{H}.
\end{equation}
\end{enumerate}
\end{theorem}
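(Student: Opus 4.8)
The plan is to establish the cycle (i)$\Rightarrow$(ii)$\Rightarrow$(iii)$\Rightarrow$(i). The implication (ii)$\Rightarrow$(iii) is immediate: subtracting the scalar $\langle(z,r),(w,\beta)\rangle$ throughout \eqref{sepa-N} and using $\gamma-\langle(z,r),(w,\beta)\rangle<0$ turns \eqref{sepa-N} into \eqref{equalemmaKM2'} with the very same pair $(w,\beta)$.

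For (i)$\Rightarrow$(ii) the crucial observation is that $\mathcal{N}$ is closed in the weak topology $\sigma(Z\times\R,W\times\R)$. Indeed, unwinding \eqref{N}--\eqref{Dy} gives
\[
\mathcal{N}=\epi v_{\mathcal{D}}=\bigcap_{w\in F(\mathcal{D})}\big\{(z',r')\in Z\times\R:\langle z',w\rangle-r'\le-\langle b,w\rangle\big\},
\]
an intersection of $\sigma(Z\times\R,W\times\R)$-closed half-spaces, the index set being nonempty by the standing consistency assumption. Since $\mathcal{N}$ is convex by Proposition~\ref{prop:MN}(ii) and, by hypothesis, $(z,r)\notin\mathcal{N}$, the strong separation theorem in the separated dual pair $(Z\times\R,W\times\R)$ (Schaefer~\cite{Schaefer71}) produces $(w,\beta)\in W\times\R$ and $\gamma\in\R$ with $\langle(z',r'),(w,\beta)\rangle\le\gamma<\langle(z,r),(w,\beta)\rangle$ for all $(z',r')\in\mathcal{N}$; restricting to $\mathcal{H}\subset\mathcal{N}$ (Proposition~\ref{prop:MN}(i)) yields exactly \eqref{sepa-N}.

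For (iii)$\Rightarrow$(i), suppose \eqref{equalemmaKM2'} holds for some $(w,\beta)$. Testing it on $(z_0,r_0+t)\in\mathcal{H}$ for $t\ge0$, with $(z_0,r_0)\in\mathcal{H}$ fixed, forces $\beta\le0$. Substituting the generators $(Ax-b-q,\langle x,c\rangle)\in\mathcal{H}$, $x\in P$, $q\in Q$, into \eqref{equalemmaKM2'} and rearranging gives, after scaling to $\beta=-1$ when $\beta<0$,
\[
\big\langle(x,q),(c-A^*w,w)\big\rangle>-\big(\langle b+z,w\rangle-r\big)\qquad\forall(x,q)\in P\times Q,
\]
and, when $\beta=0$,
\[
\big\langle(x,q),(-A^*w,w)\big\rangle>-\langle b+z,w\rangle\qquad\forall(x,q)\in P\times Q.
\]
Applying Lemma~\ref{auxili} to the strict convex cone $P\times Q$ in the dual pair $(X\times Z,Y\times W)$: in the first case one obtains $c-A^*w\in P^*$ and $w\in Q^*$, hence $w\in F(\mathcal{D})$, while the choice $x=q=0$ in \eqref{equalemmaKM2'} (with $(-b,0)\in\mathcal{H}$) gives $r<\langle b+z,w\rangle\le v_{\mathcal{D}}(z)$, so $(z,r)\notin\epi v_{\mathcal{D}}=\mathcal{N}$; in the second case one gets $-A^*w\in P^*$, $w\in Q^*$ and $\langle b+z,w\rangle>0$, and then for any $w_0\in F(\mathcal{D})$ the points $w_0+tw$ ($t\ge0$) again lie in $F(\mathcal{D})$ since $P^*$ and $Q^*$ are convex cones, whence $v_{\mathcal{D}}(z)\ge\langle b+z,w_0\rangle+t\langle b+z,w\rangle\to+\infty$ and again $(z,r)\notin\mathcal{N}$. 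I expect the main obstacle to be the closedness of $\mathcal{N}$ used in (i)$\Rightarrow$(ii) — the statement that makes separation-type duality available with no topology on $X,Y,Z$; the remaining steps are routine manipulations with cone properties and Lemma~\ref{auxili}.
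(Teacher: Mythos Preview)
Your argument is correct, but both nontrivial implications are handled differently from the paper.

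For (i)$\Rightarrow$(ii) the paper gives a direct, purely algebraic construction: from $r<v_{\mathcal{D}}(z)$ it picks $w\in F(\mathcal{D})$ with $\langle b+z,w\rangle>r$ and checks by hand that $(w,-1)$ together with $\gamma:=-\langle b,w\rangle$ satisfies \eqref{sepa-N}. You instead equip $Z\times\R$ with the weak topology of the dual pair, observe that $\mathcal{N}$ is weakly closed as an intersection of half-spaces, and invoke the strict separation theorem. This is valid, but note that your own half-space representation of $\mathcal{N}$ already hands you the separating functional without any appeal to Hahn--Banach: if $(z,r)\notin\mathcal{N}$ then one of the half-spaces $\{\langle\cdot,w\rangle-\cdot\le-\langle b,w\rangle\}$ fails to contain $(z,r)$, and that $w$ (paired with $\beta=-1$) does the job. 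The paper's route keeps the section free of topological machinery, which is its stated aim; your route is shorter once one accepts the weak topology, and in fact foreshadows Theorem~\ref{lemmathesetHN}.

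For (iii)$\Rightarrow$(i) the paper argues by contradiction: assuming $(z,r)\in\mathcal{N}$, it extracts $\beta\le0$, $w\in Q^*$, $-A^*w-\beta c\in P^*$ via Lemma~\ref{auxili}, then for arbitrary $\widehat{w}\in F(\mathcal{D})$ and $\gamma>0$ forms the convex combination $\dfrac{w+\gamma\widehat{w}}{-\beta+\gamma}\in F(\mathcal{D})$, uses $r\ge v_{\mathcal{D}}(z)$, and lets $\gamma\downarrow0$ to reach a contradiction. Your direct case split on $\beta<0$ versus $\beta=0$ is arguably cleaner: in the first case you land immediately on a feasible $w$ witnessing $r<\langle b+z,w\rangle\le v_{\mathcal{D}}(z)$; in the second you produce a recession direction of $F(\mathcal{D})$ along which the dual objective blows up. The paper's single convex-combination trick handles both cases uniformly, at the cost of a less transparent limiting argument.
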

\begin{proof}
First we verify the implication [(i)$\Longrightarrow$(ii)] by assuming that $(z,r)\notin \mathcal{N}$. The latter implies that $r<v_{\mathcal{D}}(z)=\sup\{\la b+z,w\ra : -A^*w+c\in P^*,w\in Q^*\}$, which deduces the existence of $w\in Q^*$ satisfying 
\begin{equation}\label{prop-sepa-1}
-A^*w+c\in P^* \ \text{ and }\ r - \la b+z,w\ra<0. 
\end{equation}
Letting $\gamma:= -\la b,w\ra\in \R$, we see from \eqref{prop-sepa-1} that $ \gamma <\la (z,r),(w,-1)\ra$. On the other hand, fix $(z',r')\in \mathcal{H}$ and find by \eqref{Definition_H} vectors $x\in P,q\in Q$ such that $z'=Ax-b-q$ and $r'\ge \la x,c\ra$. Since $x\in P$, we deduce from \eqref{prop-sepa-1} and the definition of dual cones the inequality $\la x, -A^*w+c\ra \ge 0$, i.e., $\la Ax,-w\ra + \la x,c\ra \ge 0$. It follows that 
$$
\begin{aligned}
    \big\la (z',r'),(w,-1)\big\ra &= \la Ax-b-q,w\ra -r' \le -\la Ax,-w\ra -\la x,c\ra - \la q,w\ra +\gamma \\
    &\le -\la q,w\ra + \gamma \le \gamma
\end{aligned}
$$
due to $q\in Q$ and $w\in Q^*$. Hence, (ii) is justified and we know that [(i)$\Longrightarrow$(ii)] is true. The implication [(ii)$\Longrightarrow$(iii)] is trivial.

Lastly, we verify the implication [(iii)$\Longrightarrow$(i)] by supposing \eqref{equalemmaKM2'} holds for some $(w,\beta)\in W\times \R$ and supposing that $(z,r)\in \mathcal{N}=\epi (v_{\mathcal{D}})$. The latter means $r\ge v_{\mathcal{D}}(z)$ which amounts to 
\begin{equation}\label{equalemmaKM1'}
  r\ge \la b+z,w'\ra \quad \forall w'\in F(\mathcal{D}).  
\end{equation}
{We claim that \eqref{equalemmaKM2'} deduces
\begin{equation}\label{equalemmaKM2'''}
    \la x,-A^*w-\beta c\ra + \la q,w\ra +\alpha(-\beta) > -\la b+z,w\ra - \beta r
    \quad \forall (x,q,\alpha)\in P\times Q\times [0,+\infty[.
\end{equation}
Indeed, pick $(x,q,\alpha)\in P\times Q\times [0,+\infty[$ and see via \eqref{Definition_H} that $$\big(Ax-b-q,\la x,c\ra + \alpha\big)\in \mathcal{H}.$$
\eqref{equalemmaKM2'} then deduces
\begin{equation*}
    \big\la (Ax-b-q)-z,w\big\ra + \big( (\la x,c\ra +\alpha) -r\big)\beta <0,  
\end{equation*}
i.e., 
$$
\la x,-A^*w-\beta c\ra + \la q,w\ra +\alpha(-\beta) > -\la b+z,w\ra - \beta r.
$$
The assertion \eqref{equalemmaKM2'''} is thus justified.}

When $(x,q)=(0_X,0_Z)$, we may rewrite \eqref{equalemmaKM2'''} as
\begin{equation}\label{alla}
  \alpha (-\beta) > -\la b+z,w\ra - \beta r
    \quad \forall \alpha\in [0,+\infty[.
\end{equation}
In the case $(x,\alpha)=(0_X,0)$, \eqref{equalemmaKM2'''} becomes
\begin{equation}\label{allq}
    \la q,w\ra > -\la b+z,w\ra - \beta r \quad \forall q\in Q.
\end{equation}
Observe also that when $(q,\alpha)=(0_Z,0)$, \eqref{equalemmaKM2'''} can be rewritten as
\begin{equation}\label{allx}
\begin{aligned}
    \la x,-A^*w-\beta c\ra > -\la b+z,w\ra - \beta r
    \quad \forall x\in P.
\end{aligned}
\end{equation}
By employing consecutively Lemma \ref{auxili} for the cones $[0,+\infty[\,\subset \R$ in \eqref{alla} and $Q\subset Z$ in \eqref{allq} and $P\subset X$ in \eqref{allx}, one obtains that $\beta \le 0$ and $w\in Q^*$ and $-A^*w-\beta c\in P^*$.

{Take arbitrary $\gamma>0$ and $\widehat{w}\in F(\mathcal{D})$. Since $\big\{-A^*w -\beta c, -A^* \widehat{w}+c\big\}\subset P^*$, $\gamma>0$, and $\beta\le 0$, we have
\begin{equation*}
    -A^*\left(\dfrac{w+\widehat{w}\gamma}{-\beta+\gamma}\right)+c = \dfrac{1}{-\beta + \gamma} \big[(-A^*w-\beta c) + \gamma (-A^*\widehat{w}+c)\big] \in P^*.
\end{equation*}
Also, $\dfrac{w+\widehat{w}\gamma}{-\beta+\gamma}\in Q^*$ as $w,\widehat{w}\in Q^*$. Having $\dfrac{w+\widehat{w}\gamma}{-\beta+\gamma}\in F(\mathcal{D})$, we get from \eqref{equalemmaKM1'} the estimate
\begin{equation*}
    r\ge \Big\la b+z, \dfrac{w+\widehat{w}\gamma}{-\beta+\gamma}\Big\ra,
\end{equation*}
i.e., $r(-\beta + \gamma) \ge \la b+z, w + \widehat{w}\gamma\ra$. Letting $\gamma \downarrow 0$, one arrives at 
\begin{equation*}
    -r\beta - \la b+z,w\ra \ge 0
\end{equation*}
which clearly contradicts \eqref{allq} at $q:=0_Z$.} As a consequence, we have $(z,r)\notin \mathcal{N}$ as claimed.
\end{proof}

Similarly in $Y\times \R$, one obtains the following ``separation-type" characterization for $\mathcal{M}$.

\begin{theorem}\label{contra-KM} Consider the problems \eqref{P}-\eqref{D} and let $(y,r)\in Y\times \R$. Consider the following assertions:
\begin{enumerate}
\item[{\rm (i)}] $(y,r)\notin \mathcal{M}$,
\item[{\rm (ii)}] There exist $(x,\beta)\in X\times \R$ and $\gamma\in \R$ such that 
\begin{equation*}
    \big\la (x,\beta),(y',r')\big\ra \le  \gamma <\la (x,\beta),(y,r)\ra \quad \forall (y',r')\in \mathcal{K}.
\end{equation*}
\item[{\rm (iii)}] There exists $(x,\beta)\in X\times \R$ such that 
\begin{equation*}
    \big\la (x,\beta),(y',r')-(y,r)\big\ra <0 \quad \forall (y',r')\in \mathcal{K}.
\end{equation*}
\end{enumerate}
Then {\rm [(i)$\Longrightarrow$(ii)]} and {\rm [(ii)$\Longrightarrow$(iii)]} hold in general, while the implication {\rm [(iii)$\Longrightarrow$(i)]} is true when $P^{**}=P$, where $P^{**}$ is given in \eqref{P**}. 
\end{theorem}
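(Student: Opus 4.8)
The plan is to transcribe the proof of Theorem~\ref{contra}, interchanging the two dual pairs together with the roles of the triples $(\mathcal{H},\mathcal{N},v_{\mathcal{D}})$ and $(\mathcal{K},\mathcal{M},v_{\mathcal{P}})$, and to pin down exactly where the self-duality of $P$ enters. The three implications split as follows.

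For [(i)$\Longrightarrow$(ii)] I would argue directly. If $(y,r)\notin\mathcal{M}=\hypo v_{\mathcal{P}}$, then $r>v_{\mathcal{P}}(y)$, so there is $x\in F(\mathcal{P})$ with $\la x,c-y\ra<r$. Take the separating pair $(x,1)\in X\times\R$ (i.e.\ $\beta=1$) and the constant $\gamma:=\la x,c\ra$. Then $\la(x,1),(y,r)\ra=\la x,y\ra+r>\la x,c\ra=\gamma$ by the choice of $x$; and for any $(y',r')\in\mathcal{K}$, writing $y'=-A^*w+c-p^*$ with $w\in Q^*$, $p^*\in P^*$ and $r'\le\la b,w\ra$, the inequalities $\la x,p^*\ra\ge0$ (since $x\in P$), $\la Ax-b,w\ra\ge0$ (since $Ax-b\in Q$) and $r'\le\la b,w\ra$ combine, via the adjoint identity, to give $\la(x,1),(y',r')\ra=-\la Ax,w\ra+\la x,c\ra-\la x,p^*\ra+r'\le\la x,c\ra=\gamma$. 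That is exactly the strict separation in (ii). The implication [(ii)$\Longrightarrow$(iii)] is immediate: subtract $(y,r)$ inside the pairing.

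For [(iii)$\Longrightarrow$(i)] under $P^{**}=P$ I would argue by contradiction, mimicking the proof of Theorem~\ref{contra}: assume the pair $(x_0,\beta)\in X\times\R$ from (iii) exists but $(y,r)\in\mathcal{M}$, i.e.\ $r\le\la x',c-y\ra$ for all $x'\in F(\mathcal{P})$. Parametrising $\mathcal{K}$ by $(w,p^*,t)\in Q^*\times P^*\times[0,+\infty[$ through $(y',r')=(-A^*w+c-p^*,\la b,w\ra-t)$, the inequality of (iii) turns — after the adjoint identity — into a single family of strict inequalities in $w$, $p^*$, $t$ whose right-hand side is the fixed scalar $D:=\beta r-\la x_0,c-y\ra$, the exact analogue of \eqref{equalemmaKM2'''}. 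Zeroing out pairs of the three variables and applying Lemma~\ref{auxili} to the strict cones $[0,+\infty[\subset\R$, $P^*$ and $Q^*$ (reading $(P^*)^*=P^{**}$ and $(Q^*)^*=Q^{**}$) produces $\beta\ge0$, $x_0\in P^{**}$, $Ax_0-\beta b\in Q^{**}$, and $D\le0$; testing the separation on the point $(c,0)\in\mathcal{K}$ sharpens this to $D>0$. The hypothesis $P^{**}=P$ now upgrades $x_0\in P^{**}$ to $x_0\in P$. Fixing $\hat x\in F(\mathcal{P})$ (nonempty by our standing consistency assumption) and $\gamma>0$, the homotopy point $x_\gamma:=(x_0+\gamma\hat x)/(\beta+\gamma)$ lies in $F(\mathcal{P})$, so $(y,r)\in\mathcal{M}$ gives $r(\beta+\gamma)\le\la x_0+\gamma\hat x,c-y\ra$; letting $\gamma\downarrow0$ yields $\beta r\le\la x_0,c-y\ra$, i.e.\ $D\le0$ — contradicting $D>0$. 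Hence $(y,r)\notin\mathcal{M}$.

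The delicate step — and the reason one can only expect an implication, and only under a self-duality hypothesis — is the verification that $x_\gamma\in F(\mathcal{P})$. Membership $x_\gamma\in P$ is exactly what $P^{**}=P$ buys us, via $x_0\in P^{**}\Rightarrow x_0\in P$ followed by convexity of the cone $P$; this is the genuine use of the hypothesis, and it mirrors the fact that in Theorem~\ref{contra} the analogous separation already delivered \emph{first-level} dual-cone data ($w\in Q^*$, $-A^*w-\beta c\in P^*$) matching $F(\mathcal{D})$ without any bidual, whereas here Lemma~\ref{auxili} is fed the \emph{dual} cones and returns bidual information. The remaining constraint $Ax_\gamma-b\in Q$ must be handled with the same care: the separation only yields $Ax_0-\beta b\in Q^{**}$, so the homotopy returns to $Q$ without loss precisely when $Q$ is $\sigma(Z,W)$-closed — in particular in the equality-constrained setting $Q=\{0_Z\}$, where $Q^{**}=Q$ automatically. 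Apart from this point the proof is a routine line-by-line dualisation of that of Theorem~\ref{contra}.
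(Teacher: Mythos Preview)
Your approach is exactly the line-by-line dualisation of Theorem~\ref{contra} that the paper intends (it gives no separate proof and simply writes ``Similarly in $Y\times\R$\ldots''). The arguments for [(i)$\Rightarrow$(ii)] and [(ii)$\Rightarrow$(iii)] are correct as written; one harmless slip is that Lemma~\ref{auxili} applied to $\beta t>-D$ yields $-D\le 0$, i.e.\ $D\ge 0$, not ``$D\le 0$'' as you wrote --- but you only use the strict inequality $D>0$ obtained by evaluating at $(c,0)\in\mathcal{K}$, so the logic survives.

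Your observation about $Q$ is substantive and correct, not merely a caveat. Dualising the proof of Theorem~\ref{contra} feeds Lemma~\ref{auxili} the cones $[0,+\infty[$, $P^*$, $Q^*$ and therefore returns $\beta\ge 0$, $x_0\in P^{**}$ and $Ax_0-\beta b\in Q^{**}$. The homotopy step then reads
\[
Ax_\gamma - b \;=\; \frac{(Ax_0-\beta b)+\gamma(A\hat x - b)}{\beta+\gamma},
\]
and to place this in $Q$ one genuinely needs $Ax_0-\beta b\in Q$, for which $Q^{**}=Q$ (not just $P^{**}=P$) is required. This is perfectly parallel to Theorem~\ref{lemmathesetHN}(ii), where the identity $\overline{\mathcal{K}}=\mathcal{M}$ is asserted only under closedness of \emph{both} $P$ and $Q$. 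So you have in fact uncovered a missing hypothesis in the paper's statement of Theorem~\ref{contra-KM}: the implication [(iii)$\Rightarrow$(i)] should be asserted under $P^{**}=P$ \emph{and} $Q^{**}=Q$. With that amendment your proof is complete and matches the paper's intended argument.
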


Next, we establish some remarkable relations between $\mathcal{N}$ from \eqref{N} and the algebraic closure of $\mathcal{H}$ in $Z\times \R$. First, let us recall from Holmes \cite[p.~9]{Holmes} the set of \textit{linearly accessible points} of $\Omega \subset X$ as 
\begin{equation*}
    \lina \Omega :=\{x\in X: \exists\, \ox\in \Omega \setminus \{x\} \text{ such that } [\ox,x[\, \subset\, \Omega \},
\end{equation*}
where $[\ox,x[:=\big\{(1-\lambda)\ox + \lambda x: \lambda \in [0,1[\big\}$. Then, the \textit{algebraic closure} of $\Omega$ is 
\begin{equation*}
    \acl \Omega := \Omega \cup \lina \Omega.
\end{equation*}
Notably, the algebraic closure of a convex set $\Omega\subset X$ coincides with its (topological) closure provided that either $X$ is finite-dimensional \cite[Section~2D,~p.~9]{Holmes} or $X$ is merely a topological vector space and the interior of $\Omega$ is nonempty \cite[Lemma,~p.~59]{Holmes}.

Surprisingly, Proposition \ref{radiallycont} and Proposition \ref{prop:MN} ensure that $\mathcal{N}$ contains the algebraic closure $\acl \mathcal{H}$ in $Z\times \R$. 
\begin{proposition}\label{aclHandM}
For $\mathcal{H}$ and $\mathcal{N}$ given, respectively, by \eqref{Definition_H} and \eqref{N}, we have $$\acl \mathcal{H} \subset  \mathcal{N}.$$   
\end{proposition}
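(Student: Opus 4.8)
The plan is to exploit the decomposition $\acl\mathcal{H}=\mathcal{H}\cup\lina\mathcal{H}$ together with the inclusion $\mathcal{H}\subset\mathcal{N}$ already recorded in Proposition \ref{prop:MN}(i). Thus the whole statement reduces to proving $\lina\mathcal{H}\subset\mathcal{N}=\epi v_{\mathcal{D}}$, i.e.\ that every point linearly accessible from $\mathcal{H}$ already sits in $\mathcal{N}$. The driving mechanism is the radial lower semicontinuity of $v_{\mathcal{D}}$ from Proposition \ref{radiallycont}.

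Concretely, I would fix $(z,r)\in\lina\mathcal{H}$ and invoke the definition to obtain a point $(\oz,\ovr)\in\mathcal{H}\setminus\{(z,r)\}$ with $[(\oz,\ovr),(z,r)[\,\subset\mathcal{H}$. Reparametrizing this half-open segment as $(1-\lambda)(z,r)+\lambda(\oz,\ovr)$ with $\lambda\in\,]0,1]$, membership in $\mathcal{H}\subset\mathcal{N}=\epi v_{\mathcal{D}}$ gives, for every such $\lambda$,
\[
v_{\mathcal{D}}\big((1-\lambda)z+\lambda\oz\big)\;\le\;(1-\lambda)r+\lambda\ovr .
\]
Now apply the radial lower semicontinuity inequality for $v_{\mathcal{D}}$ (the displayed inequality preceding Proposition \ref{radiallycont}) with base point $z$ and the point $\oz$:
\[
v_{\mathcal{D}}(z)\;\le\;\liminf_{\lambda\to 0}\,v_{\mathcal{D}}\big((1-\lambda)z+\lambda\oz\big)\;\le\;\liminf_{\lambda\to 0}\big[(1-\lambda)r+\lambda\ovr\big]\;=\;r ,
\]
the final equality because $r,\ovr\in\R$. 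Hence $(z,r)\in\epi v_{\mathcal{D}}=\mathcal{N}$, and since $(z,r)$ was an arbitrary linearly accessible point, $\lina\mathcal{H}\subset\mathcal{N}$, which combined with $\mathcal{H}\subset\mathcal{N}$ yields $\acl\mathcal{H}\subset\mathcal{N}$.

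There is no substantial obstacle here; the proposition is really a packaging of Proposition \ref{prop:MN} and Proposition \ref{radiallycont}. The only point demanding mild care is bookkeeping: one must orient the segment so that the parameter tending to $0$ drives the point toward $z$ rather than toward $\oz$, so that it matches the form of the radial semicontinuity inequality, and one must note that $\ovr$ — the second coordinate of a point of $\mathcal{H}\subset Z\times\R$ — is a finite real number, which is what makes the right-hand $\liminf$ collapse to $r$.
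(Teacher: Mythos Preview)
Your proof is correct and essentially identical to the paper's own argument: both reduce to showing $\lina\mathcal{H}\subset\mathcal{N}$ via Proposition~\ref{prop:MN}(i), parametrize the half-open segment as $(1-\lambda)(z,r)+\lambda(\oz,\ovr)$ for $\lambda\in\,]0,1]$, read off the epigraph inequality, and pass to the $\liminf$ using the radial lower semicontinuity of $v_{\mathcal{D}}$ from Proposition~\ref{radiallycont}.
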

\begin{proof}
As $\mathcal{H}\subset \mathcal{N}$ by Proposition \ref{prop:MN}, showing that $\lina \mathcal{H}\subset \mathcal{N}$ is sufficient. To this end, suppose there is $(z,r)\in \lina \mathcal{H}$, i.e., there exists $(\oz,\overline{r})\in \mathcal{H} \setminus \{(z,r)\}$ satisfying $[(\oz,\ovr),(z,r)[ \subset \mathcal{H} \subset \mathcal{N}$, and thus 
\begin{equation}\label{sub-1}
v_{\mathcal{D}}\big((1-\lambda)z + \lambda \oz\big) \le (1-\lambda)r + \lambda \ovr \ \text{ for all }\ \lambda\in\, ]0,1].
\end{equation}
Besides, recall from Proposition \ref{radiallycont} that $v_{\mathcal{D}}$ is radially lower semicontinuous, which together with \eqref{sub-1} deduces the relationships
\begin{equation*}
v_{\mathcal{D}}(z) \le \liminf_{\lambda\downarrow 0} v_{\mathcal{D}}\big((1-\lambda)z + \lambda \oz\big) \le \liminf_{\lambda\downarrow 0} \big( (1-\lambda)r + \lambda \ovr\big) = r,
\end{equation*}
which thus confirm $(z,r)\in \mathcal{N}$. The inclusion $\acl \mathcal{H}\subset \mathcal{N}$ is hence justified.
\end{proof}

Motivated by the \textit{closedness conditions} (e.g., \cite{Kretschmer61,KhanhMoTrinh}), we state the following conditions which play a central role in our subsequent study.
\begin{equation*}
\begin{split}
&\text{{\bf Condition (D)}: $\quad\mathcal{H}\, =\,\mathcal{N}$,}\\
&\text{{\bf Condition (D$^*$)}: $\quad\mathcal{K}\,=\,\mathcal{M}$.}
\end{split}
\end{equation*}

Loosely speaking, we shall elaborate in Subsection \ref{subsub:strongdual} that under a feasibility condition, condition {\bf (D)} amounts to a \textit{perturbed strong duality-type} result. In Section \ref{sec:algebra}, we provide an algebraic sufficient condition for {\bf (D)} via the \textit{algebraic core} of the cone $P^*$ in question. On the other hand, in Section \ref{sec:topo}, we confirm that condition {\bf (D)} reduces exactly to the closedness of $\mathcal{H}$ from \eqref{Definition_H}, when topological structures are incorporated. The analogous results involving {\bf (D$^*$)} follow by the same reasoning.

To establish the desired perturbed strong duality results, first we need to construct some corresponding versions of perturbed Farkas lemmas as follows. 
\section{Farkas lemma and strong duality}\label{sec:strongdual}
\subsection{Algebraic perturbed Farkas lemma}\label{Farkas}
This subsection is devoted to the study of two versions of perturbed Farkas lemma for infinite-dimensional conic linear programming. First, we have the following perturbed form of Farkas lemma. To the best of our knowledge, results in this manner appeared for the first time in \cite[Theorem~3]{Zalinescu78}. 

\begin{theorem}[Perturbed Farkas lemma 1]\label{theo:fixz}
Consider the problems \eqref{P}-\eqref{D} and fix $z\in Z$. The following statements are equivalent:
\begin{enumerate}
    \item[{\rm (i)}] $\mathcal{H}\cap (\{z\}\times \R) = \mathcal{N}\cap (\{z\}\times \R)$,
    \item[{\rm (ii)}] For any $\alpha \in \R$, the following assertions are equivalent:
    \begin{enumerate}
    \item[{\rm (a)}] $\forall w\in Q^* \text{ satisfying } -A^*w+c\in P^*$, we have $\langle b+z, w\rangle \leq \alpha$. \item[{\rm (b)}] $\exists\, x\in P\ \text{such\ that}\ Ax-b-z\in Q\text{ and }\la x,c\ra \le \alpha$.%
    \end{enumerate}
\end{enumerate}
\end{theorem}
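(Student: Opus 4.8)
The plan is to show that both (i) and (ii) are simply two different encodings of one and the same statement, namely
\[
\forall\, r\in\R:\qquad (z,r)\in\mathcal{N}\ \Longrightarrow\ (z,r)\in\mathcal{H},
\]
so that once the relevant definitions are unwound on the slice $\{z\}\times\R$, the equivalence [(i)$\Leftrightarrow$(ii)] becomes a matter of bookkeeping. First I would record two elementary identifications. From \eqref{Definition_H}, a pair $(z,\alpha)$ lies in $\mathcal{H}$ precisely when there is $x\in P$ with $Ax-b-z\in Q$ (take $q:=Ax-b-z$ in the Minkowski difference $Ax-b-Q$) and $\alpha\ge\la x,c\ra$; that is, for fixed $\alpha\in\R$ the membership $(z,\alpha)\in\mathcal{H}$ is literally assertion (b). Dually, since $\mathcal{N}=\epi v_{\mathcal{D}}$ with $v_{\mathcal{D}}(z)=\sup\{\la b+z,w\ra: w\in F(\mathcal{D})\}$ and $F(\mathcal{D})\ne\emptyset$ by the standing consistency assumption, the membership $(z,\alpha)\in\mathcal{N}$ is equivalent to $v_{\mathcal{D}}(z)\le\alpha$, i.e.\ to $\la b+z,w\ra\le\alpha$ for every $w\in Q^*$ with $-A^*w+c\in P^*$, which is exactly assertion (a).

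With these two identifications in hand, the implication [(b)$\Rightarrow$(a)] for a given $\alpha$ and the slice inclusion $\mathcal{H}\cap(\{z\}\times\R)\subset\mathcal{N}\cap(\{z\}\times\R)$ are one and the same fact, and it is already available: it is Proposition~\ref{prop:MN}(i) (equivalently, the weak-duality computation carried out in the proof of Theorem~\ref{theo:weak-dual}). Hence condition (i) is equivalent to the reverse slice inclusion $\mathcal{N}\cap(\{z\}\times\R)\subset\mathcal{H}\cap(\{z\}\times\R)$, i.e.\ to ``$(z,\alpha)\in\mathcal{N}\Rightarrow(z,\alpha)\in\mathcal{H}$ for all $\alpha\in\R$'', i.e.\ to ``(a)$\Rightarrow$(b) for all $\alpha\in\R$''. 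Since the converse implication (b)$\Rightarrow$(a) always holds, this last statement is precisely (ii), and the loop closes. In writing this up I would present it as a single chain of equivalences rather than as two separate implications.

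I do not expect a genuine obstacle here; the theorem is a restatement of the slice identity in ``theorem of the alternative'' language. The only points requiring real care are: (1) the faithful translation of the existential quantifier ``$\exists q\in Q$'' buried in $Ax-b-Q$ into the condition $Ax-b-z\in Q$, and (2) the degenerate cases, which I would dispose of explicitly. If $F(\mathcal{D})=\emptyset$ (excluded by our hypothesis) or $v_{\mathcal{D}}(z)=+\infty$, then (a) fails for every $\alpha$ and $\mathcal{N}\cap(\{z\}\times\R)=\emptyset$; since $\mathcal{H}\subset\mathcal{N}$, the set $\mathcal{H}\cap(\{z\}\times\R)$ is then empty as well, so (b) also fails for every $\alpha$ and both (i) and the ``for all $\alpha$'' form of (ii) hold vacuously. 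In the remaining case $v_{\mathcal{D}}(z)\in\R$ the argument of the previous paragraph applies verbatim.
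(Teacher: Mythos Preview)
Your proposal is correct and follows essentially the same approach as the paper: both arguments rest on the identifications $(z,\alpha)\in\mathcal{H}\Leftrightarrow\text{(b)}$ and $(z,\alpha)\in\mathcal{N}\Leftrightarrow\text{(a)}$, together with the already-known inclusion $\mathcal{H}\subset\mathcal{N}$ (i.e., (b)$\Rightarrow$(a)). The only difference is presentational---you package everything as a single chain of equivalences and invoke Proposition~\ref{prop:MN}(i) for (b)$\Rightarrow$(a), whereas the paper treats the two implications [(i)$\Rightarrow$(ii)] and [(ii)$\Rightarrow$(i)] separately and redoes the weak-duality computation explicitly.
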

\begin{proof}
[(i)$\Longrightarrow$(ii)] Assume that (i) holds. Fix $\alpha\in \R$. We first prove that [(a)$\Longrightarrow$(b)] in (ii) is true. 
Indeed, one has 
\[\alpha \ge \langle b+z,w\rangle, \ \forall w \in Q^* \text{ satisfying }-A^*w+c\in P^*\]
is equivalent to $\alpha \geq v_{\mathcal{D}}(z)$, which means that 
$$(z,\alpha) \in \epi ( v_{\mathcal{D}})\cap (\{z\}\times \R) = \mathcal{N}\cap (\{z\}\times \R)=\mathcal{H}\cap (\{z\}\times \R)$$ 
by \eqref{N} and (i). By the definition of $\mathcal{H}$,  the inclusion ensures the existence of $x\in P$ such that $Ax-b-z\in Q$ and $\la x,c\ra \le \alpha$ which verifies [(a)$\Longrightarrow$(b)] in (ii). We then show that the implication [(b)$\Longrightarrow$(a)] of (ii) holds by assuming the existence of $x\in P$ such that 
\begin{equation}\label{eq:up-1}
Ax-b-z\in Q\text{ and }\la x,c\ra \le \alpha.
\end{equation}
Pick any $w\in Q^*$ satisfying $-A^*w + c \in P^*$. Then by $x\in P$, $Ax-b-z\in Q$, and the definition of positive dual cones, we have $\la x,-A^*w+c\ra \ge 0$ and $\la Ax-b-z,w\ra\ge 0$. Combining this with \eqref{eq:up-1} yields
\begin{equation*}
    \la b+z,w\ra \le \la Ax,w\ra = \la x,A^*w\ra \le \la x,c\ra \le \alpha.
\end{equation*}
Observe that the implication [(b)$\Longrightarrow$(a)] remains valid even without assumption (i).

[(ii)$\Longrightarrow$(i)] Assume that (ii) holds. We only need to show that $\mathcal{N}\cap (\{z\}\times \R) \subset \mathcal{H}\cap (\{z\}\times \R)$. To do this, take $\alpha \in \R$ for which $(z,\alpha) \in \mathcal{N}$. Therefore, $$(z,\alpha) \in \bigcup_{z\,\in\, \limfunc{dom}v_{\mathcal{D}}}\left(\{z\}\times[ v_{\mathcal{D}}(z), +\infty[ \right)$$ by \eqref{N}, and thus $\la b+z,w\ra \le \alpha$ for all $w\in F(\mathcal{D})$. As (ii) holds, it follows that there exists $x\in P$ satisfying
\begin{equation*}
    Ax-b-z\in Q\ \text{ and }\ \la x,c\ra \le \alpha,
\end{equation*}
i.e., $(z,\alpha)\in \mathcal{H}$. The proof is complete.
\end{proof}

\begin{remark}
Comparing to \cite[Theorem~3]{Zalinescu78}, Theorem \ref{theo:fixz} provides not only a sufficient but also necessary condition for the equivalence \textup{[(a)$\Longleftrightarrow$(b)]} via the identity $\mathcal{H}=\mathcal{N}$ which is shown later in Section \ref{sec:topo} to be equivalent to the weak closedness of $\mathcal{H}$ in $Z\times \R$.
\end{remark}

Letting $z:=0_Z$ in Theorem \ref{theo:fixz}, we obtain a generalization of a well-known form of Farkas lemma for finite-dimensional linear programming.

\begin{theorem} [Farkas lemma 1]\label{Farkas1ILP}  Regarding the problems \eqref{P}-\eqref{D}, the following statements are equivalent:
\begin{enumerate}
    \item[{\rm (i)}] $\mathcal{H}\cap (\{0_Z\}\times \R) = \mathcal{N}\cap (\{0_Z\}\times \R)$,
    \item[{\rm (ii)}] For any $\alpha \in \R$, the following assertions are equivalent:
    \begin{enumerate}
    \item[{\rm (a)}] $\forall w\in Q^* \text{ and }\ -A^*w+c\in P^*$, we have $\langle b, w\rangle \leq \alpha$. \item[{\rm (b)}] $\exists\, x\in P\ \text{such\ that}\ Ax-b\in Q\text{ and }\la x,c\ra \le \alpha$.%
    \end{enumerate}
\end{enumerate}
\end{theorem}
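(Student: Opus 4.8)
The plan is to obtain this statement as the special case $z := 0_Z$ of Theorem~\ref{theo:fixz} (Perturbed Farkas lemma~1), which has already been established. First I would substitute $z = 0_Z$ into that theorem. On the geometric side, the slice $\{z\}\times\R$ becomes $\{0_Z\}\times\R$, so assertion~(i) of Theorem~\ref{theo:fixz} turns verbatim into assertion~(i) above. On the side of the alternative, the perturbed right-hand side $b+z$ collapses to $b$: condition~(a) of Theorem~\ref{theo:fixz} becomes ``for every $w\in Q^*$ with $-A^*w+c\in P^*$ we have $\langle b,w\rangle\le\alpha$'', and condition~(b) becomes ``there exists $x\in P$ with $Ax-b\in Q$ and $\langle x,c\rangle\le\alpha$''. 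These coincide with (a) and (b) in the present statement, so the desired equivalence (i)$\Longleftrightarrow$(ii) is immediate.

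I do not expect any genuine obstacle here, since the result is a pure specialization; the only point to check is that Theorem~\ref{theo:fixz} indeed permits the choice $z=0_Z$, which it does, as $z\in Z$ there is arbitrary. If instead one preferred a self-contained derivation, one would simply repeat the proof of Theorem~\ref{theo:fixz} with $z=0_Z$ throughout, invoking $v_{\mathcal{D}}(0_Z)=\textup{val}(\mathcal{D})$ together with the identity \eqref{valP2} for $\textup{val}(\mathcal{P})$; but that merely reproduces the earlier argument, so the one-line reduction is the preferable route.
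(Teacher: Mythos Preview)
Your proposal is correct and matches the paper's own treatment exactly: the paper simply states that Theorem~\ref{Farkas1ILP} is obtained by letting $z:=0_Z$ in Theorem~\ref{theo:fixz}, with no further argument given. Your additional remark about the self-contained alternative is accurate but, as you note, unnecessary.
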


We now apply these arguments again, with $\mathcal{H}$ replaced by $\mathcal{K}$, to obtain the dual counterparts to above.

\begin{theorem}[Perturbed Farkas lemma 2]\label{theo:fixy}
Consider the problems \eqref{P}-\eqref{D} and fix $y\in Y$. The following statements are equivalent:
\begin{enumerate}
    \item[{\rm (i)}] $\mathcal{K}\cap (\{y\}\times \R) = \mathcal{M}\cap (\{y\}\times \R)$,
    \item[{\rm (ii)}] For any $\alpha \in \R$, the following assertions are equivalent:
    \begin{enumerate}
    \item[{\rm (a)}] $\forall x\in P,\ Ax-b\in Q$, we have $\langle x,c-y \rangle \geq \alpha$. 
    \item[{\rm (b)}] $\exists\, w\in Q^*$ such that $-A^*w+c-y\in P^*$ and $\la b,w\ra \ge \alpha$.%
    \end{enumerate}
\end{enumerate}
\end{theorem}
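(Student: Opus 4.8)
The plan is to mirror the proof of Theorem~\ref{theo:fixz} almost verbatim, now with $\mathcal{K}$, $\mathcal{M}$ and $v_{\mathcal{P}}$ playing the roles of $\mathcal{H}$, $\mathcal{N}$ and $v_{\mathcal{D}}$. The only systematic change is that several inequalities flip direction, because $v_{\mathcal{P}}$ is an infimum of affine functionals (hence concave) and $\mathcal{M}=\hypo v_{\mathcal{P}}$, whereas $v_{\mathcal{D}}$ is a supremum and $\mathcal{N}=\epi v_{\mathcal{D}}$.

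For the implication [(i)$\Longrightarrow$(ii)], fix $\alpha\in\R$. I would first dispose of [(b)$\Longrightarrow$(a)], which does not use (i): given $w\in Q^*$ with $-A^*w+c-y\in P^*$ and $\la b,w\ra\ge\alpha$, take any $x\in P$ with $Ax-b\in Q$; then $\la x,-A^*w+c-y\ra\ge0$ (since $x\in P$) and $\la Ax-b,w\ra\ge0$ (since $w\in Q^*$), so chaining $\la x,c-y\ra\ge\la x,A^*w\ra=\la Ax,w\ra\ge\la b,w\ra\ge\alpha$ (using \eqref{adjoint-pure}) yields (a); this is exactly the weak-duality computation behind Theorem~\ref{theo:weak-dual}. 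For [(a)$\Longrightarrow$(b)], note that (a) says precisely $v_{\mathcal{P}}(y)=\inf\{\la x,c-y\ra:x\in F(\mathcal{P})\}\ge\alpha$; since $F(\mathcal{P})\ne\emptyset$ by the standing consistency assumption this value is $>-\infty$, so $(y,\alpha)\in\hypo v_{\mathcal{P}}=\mathcal{M}$. By (i), $(y,\alpha)\in\mathcal{M}\cap(\{y\}\times\R)=\mathcal{K}\cap(\{y\}\times\R)$, and unwinding the definition \eqref{Definition_K} of $\mathcal{K}$ produces $w\in Q^*$ with $-A^*w+c-y\in P^*$ and $\alpha\le\la b,w\ra$, i.e.\ (b).

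For [(ii)$\Longrightarrow$(i)], by Proposition~\ref{prop:MN}(i) the inclusion $\mathcal{K}\cap(\{y\}\times\R)\subset\mathcal{M}\cap(\{y\}\times\R)$ is automatic, so it suffices to prove the reverse one. Take $(y,\alpha)\in\mathcal{M}$; then $\alpha\le v_{\mathcal{P}}(y)$, i.e.\ $\la x,c-y\ra\ge\alpha$ for every $x\in F(\mathcal{P})$, which is exactly statement (a). Applying (ii) gives (b), namely a $w\in Q^*$ with $-A^*w+c-y\in P^*$ and $\la b,w\ra\ge\alpha$, and this says precisely $(y,\alpha)\in\mathcal{K}$.

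I do not expect a genuine obstacle here: the argument is elementary, using only the definitions of the dual cones, the adjoint identity, and the weak-duality inequality, not any separation theorem. The one thing to keep straight is the bookkeeping of $\inf$/$\hypo$ versus $\sup$/$\epi$ and the corresponding directions of the inequalities; and it is worth remarking that, in contrast with the separation-type characterization in Theorem~\ref{contra-KM}, no extra hypothesis such as $P^{**}=P$ is needed, precisely because no appeal to an algebraic separation theorem is made.
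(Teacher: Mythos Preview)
Your proposal is correct and matches the paper's approach exactly: the paper does not spell out a separate proof for Theorem~\ref{theo:fixy} but simply states ``we now apply these arguments again, with $\mathcal{H}$ replaced by $\mathcal{K}$,'' and your write-up is precisely that dualization of the proof of Theorem~\ref{theo:fixz}, with the inequalities reversed as required by $\mathcal{M}=\hypo v_{\mathcal{P}}$ in place of $\mathcal{N}=\epi v_{\mathcal{D}}$. Your remark that no $P^{**}=P$ hypothesis is needed (in contrast to Theorem~\ref{contra-KM}) is also apt.
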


Specifically when $y:=0_Y$, we have

\begin{theorem} [Farkas lemma 2]\label{Farkas2ILP} Regarding the problems \eqref{P}-\eqref{D}, the following statements are equivalent:
\begin{enumerate}
    \item[{\rm (i)}] $\mathcal{K}\cap (\{0_Y\}\times \R) = \mathcal{M}\cap (\{0_Y\}\times \R)$,
    \item[{\rm (ii)}] For any $\alpha \in \R$, the following assertions are equivalent:
    \begin{enumerate}
    \item[{\rm (a)}] $\forall x\in P \text{ such that } Ax-b\in Q$, we have $\langle x,c \rangle \geq \alpha$. 
    \item[{\rm (b)}] $\exists\, w\in Q^*$ such that $-A^*w+c\in P^*$ and $\la b,w\ra \ge \alpha$.%
    \end{enumerate}
\end{enumerate}
\end{theorem}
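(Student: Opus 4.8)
The plan is to derive Theorem~\ref{Farkas2ILP} directly from the perturbed statement of Theorem~\ref{theo:fixy} by specializing the perturbation parameter to $y:=0_Y$, exactly as Theorem~\ref{Farkas1ILP} is obtained from Theorem~\ref{theo:fixz} by taking $z:=0_Z$. Concretely, setting $y=0_Y$ in item~(i) of Theorem~\ref{theo:fixy} reproduces verbatim the equality $\mathcal{K}\cap(\{0_Y\}\times\R)=\mathcal{M}\cap(\{0_Y\}\times\R)$ of item~(i) above. In assertion~(a) of Theorem~\ref{theo:fixy}(ii) the pairing $\la x,c-y\ra$ collapses to $\la x,c\ra$, turning~(a) into the present~(a); in assertion~(b) the membership $-A^{*}w+c-y\in P^{*}$ becomes $-A^{*}w+c\in P^{*}$, turning~(b) into the present~(b). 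Hence the equivalence [(i)$\Longleftrightarrow$(ii)] provided by Theorem~\ref{theo:fixy}, read at $y=0_Y$, is precisely the asserted equivalence, and nothing further is needed.

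For self-containedness I would also spell out the argument that mirrors the proof of Theorem~\ref{theo:fixz}, with $(\mathcal{H},\mathcal{N},v_{\mathcal{D}},z)$ replaced throughout by $(\mathcal{K},\mathcal{M},v_{\mathcal{P}},0_Y)$. The two bookkeeping facts to record are, first, that by \eqref{M} and \eqref{Pz} one has $(0_Y,\alpha)\in\mathcal{M}$ if and only if $\alpha\le v_{\mathcal{P}}(0_Y)=\inf\{\la x,c\ra:x\in F(\mathcal{P})\}$, i.e.\ if and only if assertion~(a) holds; and second, that by \eqref{Definition_K} one has $(0_Y,\alpha)\in\mathcal{K}$ if and only if there is $w\in Q^{*}$ with $-A^{*}w+c\in P^{*}$ and $\la b,w\ra\ge\alpha$, i.e.\ if and only if assertion~(b) holds. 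Granting~(i), the sets $\mathcal{K}$ and $\mathcal{M}$ agree on the slice $\{0_Y\}\times\R$, so (a) and (b) become equivalent, which is [(i)$\Longrightarrow$(ii)]. For [(ii)$\Longrightarrow$(i)] one combines the inclusion $\mathcal{K}\subset\mathcal{M}$ from Proposition~\ref{prop:MN}(i) with the observation that any $(0_Y,\alpha)\in\mathcal{M}$ satisfies~(a), hence~(b) by~(ii), hence lies in $\mathcal{K}$.

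I do not expect a genuine obstacle: the whole statement is a transcription of Theorem~\ref{theo:fixy}. The one step that deserves care --- and which is already handled inside the proof of Theorem~\ref{theo:fixz} and inherited here --- is that the implication [(b)$\Longrightarrow$(a)] must be established \emph{without} using hypothesis~(i); it is the one-line weak-duality estimate $\la x,c\ra-\alpha\ge\la x,c\ra-\la b,w\ra=\la x,-A^{*}w+c\ra+\la Ax-b,w\ra\ge0$, valid for every feasible $x$ of $(\mathcal{P})$ and every $w$ witnessing~(b), using $x\in P$, $-A^{*}w+c\in P^{*}$, $Ax-b\in Q$, $w\in Q^{*}$ and the adjoint identity \eqref{adjoint-pure}. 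It is worth noting that, in contrast with the separation-type characterization of $\mathcal{M}$ in Theorem~\ref{contra-KM}, the present Farkas lemma needs no extra assumption such as $P^{**}=P$.
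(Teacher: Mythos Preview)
Your proposal is correct and follows exactly the paper's approach: the paper obtains Theorem~\ref{Farkas2ILP} by specializing Theorem~\ref{theo:fixy} at $y:=0_Y$, just as Theorem~\ref{Farkas1ILP} is obtained from Theorem~\ref{theo:fixz} at $z:=0_Z$. Your additional self-contained argument mirroring the proof of Theorem~\ref{theo:fixz} is also correct and matches the paper's implicit reasoning (``apply these arguments again, with $\mathcal{H}$ replaced by $\mathcal{K}$'').
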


\subsection{Algebraic perturbed strong duality}\label{subsub:strongdual}

Here in this subsection we show that the Farkas lemmas for conic linear programming in Section \ref{Farkas} can be used to establish necessary and sufficient conditions for the  \textit{perturbed strong duality}. Next, we present to the readers our first perturbed form of strong duality.

\begin{theorem}[Perturbed strong duality 1]\label{strongduality1-z} Consider the problems \eqref{P}-\eqref{D} and fix $z\in Z$. The following statements are equivalent:
	\begin{enumerate}
		\item[{\rm (i)}]$\emptyset\neq \mathcal{H}\cap (\{z\}\times \R)=\mathcal{N}\cap(\{z\}\times\R)$,
		\item[{\rm (ii)}] $\min\{\langle x,c \rangle:Ax-b-z\in Q, x\in P\} = \sup\{\langle b+z, w\rangle:-A^*w+c \in P^*, w\in Q^*\}.$
	\end{enumerate} 
\end{theorem}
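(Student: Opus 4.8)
The plan is to reduce everything to the perturbed Farkas lemma of Theorem~\ref{theo:fixz}, then upgrade its alternative-type conclusion to a genuine strong-duality identity by treating attainment separately. Write $\mu:=\inf\{\langle x,c\rangle:Ax-b-z\in Q,\ x\in P\}$ for the value of the cost-perturbed primal, and recall $v_{\mathcal{D}}(z)=\sup\{\langle b+z,w\rangle:-A^*w+c\in P^*,\ w\in Q^*\}$. Two observations will be used repeatedly. First, mimicking the weak duality proof of Theorem~\ref{theo:weak-dual} yields $\mu\ge v_{\mathcal{D}}(z)$ as soon as the perturbed primal is feasible, i.e. $\mathcal{H}\cap(\{z\}\times\R)\neq\emptyset$. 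Second, since $F(\mathcal{D})\neq\emptyset$ by the standing assumption, $v_{\mathcal{D}}(z)>-\infty$ always, so $\mathcal{N}\cap(\{z\}\times\R)\neq\emptyset$ is equivalent to $v_{\mathcal{D}}(z)\in\R$, in which case $(z,v_{\mathcal{D}}(z))$ is the lowest point of that slice and belongs to $\mathcal{N}=\epi v_{\mathcal{D}}$.

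For the implication [(i)$\Rightarrow$(ii)]: hypothesis (i) supplies both the Farkas equivalence (a)$\Leftrightarrow$(b) of Theorem~\ref{theo:fixz}(ii) and, via the nonemptiness clause, the finiteness of $v_{\mathcal{D}}(z)$. Taking $\alpha:=v_{\mathcal{D}}(z)$, statement (a) holds trivially (it is just $v_{\mathcal{D}}(z)\le\alpha$), so (b) produces $x^*\in P$ with $Ax^*-b-z\in Q$ and $\langle x^*,c\rangle\le v_{\mathcal{D}}(z)$; equivalently $(z,v_{\mathcal{D}}(z))\in\mathcal{H}$, which one could also read off directly from (i) together with the second observation above. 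Feasibility of $x^*$ and the weak-duality inequality $\mu\ge v_{\mathcal{D}}(z)$ then force $\langle x^*,c\rangle=\mu=v_{\mathcal{D}}(z)$, so the infimum in (ii) is attained at $x^*$ and equals $v_{\mathcal{D}}(z)$, which is precisely (ii).

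For the implication [(ii)$\Rightarrow$(i)]: the symbol ``$\min$'' in (ii) is to be read literally, so there is $x^*\in P$ with $Ax^*-b-z\in Q$ and $\langle x^*,c\rangle=\mu=v_{\mathcal{D}}(z)$; in particular $\mathcal{H}\cap(\{z\}\times\R)\neq\emptyset$. To obtain the equality in (i) it suffices, by Theorem~\ref{theo:fixz}, to verify the equivalence (a)$\Leftrightarrow$(b) for every $\alpha\in\R$: the implication (b)$\Rightarrow$(a) always holds (as noted within the proof of Theorem~\ref{theo:fixz}), and if (a) holds then $v_{\mathcal{D}}(z)\le\alpha$, whence $\langle x^*,c\rangle=v_{\mathcal{D}}(z)\le\alpha$ gives (b). Alternatively, bypassing Theorem~\ref{theo:fixz}: the inclusion $\mathcal{H}\cap(\{z\}\times\R)\subseteq\mathcal{N}\cap(\{z\}\times\R)$ is already known from Proposition~\ref{prop:MN}(i), and for the reverse inclusion any $(z,r)\in\mathcal{N}$ satisfies $r\ge v_{\mathcal{D}}(z)=\langle x^*,c\rangle$, so $z\in Ax^*-b-Q$ and $r\in[\langle x^*,c\rangle,+\infty[$ place $(z,r)$ in $\mathcal{H}$ by \eqref{Definition_H}.

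The step I expect to be the crux is the bookkeeping around attainment. The clause ``$\emptyset\neq$'' in (i) is exactly what turns a mere zero-gap identity into the attained ``$\min=\sup$'', and, conversely, it is the attainment built into the word ``$\min$'' in (ii) — not just equality of the two optimal values — that allows one to place the single boundary point $(z,v_{\mathcal{D}}(z))$ of the slice $\mathcal{N}\cap(\{z\}\times\R)$ inside $\mathcal{H}$. Once this is isolated, the remainder is a routine unwinding of the definitions of $\mathcal{H}$ and $\mathcal{N}=\epi v_{\mathcal{D}}$ together with the weak-duality inequality of Theorem~\ref{theo:weak-dual}.
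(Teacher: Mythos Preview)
Your proof is correct and follows essentially the same route as the paper: both directions are reduced to the perturbed Farkas lemma (Theorem~\ref{theo:fixz}) together with the weak-duality inequality, with attainment handled explicitly. Your presentation is somewhat more economical---you apply Farkas once with $\alpha=v_{\mathcal{D}}(z)$ to get attainment and zero gap simultaneously, whereas the paper first argues attainment via $(z,\inf)\in\mathcal{N}=\mathcal{H}$ and then applies Farkas separately---and your optional direct argument for [(ii)$\Rightarrow$(i)] bypassing Theorem~\ref{theo:fixz} is a clean addition.
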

\begin{proof}
First we justify the implication [(i)$\Longrightarrow$(ii)] by assuming the validation of (i). Above all, we show that the infimum value $\inf\{\la x,c\ra:Ax-b-z\in Q,x\in P\}$ is finite and is attained. By the definition of the dual cones $P^*,Q^*$ and the asssumption $F(\mathcal{D})\neq \emptyset$, it is obvious that
\begin{equation}\label{perturbeddualityequa1}
	\inf\{\langle x,c \rangle: Ax-b-z\in Q, x\in P\} \geq \sup\{\langle b+z,w\rangle:-A^*w+c\in P^*, w\in Q^*\}>-\infty.
\end{equation}
On the other hand, the nonemptiness of $\mathcal{H}\cap (\{z\}\times \R)$ lends us a primal feasible solution $x\in P$ at which $Ax-b-z\in Q$. Combining this with \eqref{perturbeddualityequa1} ensures that the infimum in \eqref{perturbeddualityequa1} is finite. Employing now \eqref{N}, \eqref{perturbeddualityequa1} and assumption (i) yields $${\Big(z,\inf\{\la x,c\ra : Ax-b-z\in Q,x\in P\}\Big)\in \mathcal{N}\cap (\{z\}\times \R) =\mathcal{H}\cap (\{z\}\times \R).}$$
Consequently, by \eqref{Definition_H} there exists $\ox\in P$ such that $A\ox-b-z\in Q$ and $$\la \ox,c\ra \le \inf\{\la x,c\ra : Ax-b-z\in Q,x\in P\},$$ i.e., the infimum in \eqref{perturbeddualityequa1} is attained at $\ox$.

Now we verify the identity in (ii). By \eqref{perturbeddualityequa1} and the above reasoning, we have $${\sup\{\langle b+z,w\rangle:-A^*w+c\in P^*, w\in Q^*\} = \alpha \in \R.}$$
By the supremum representation of $\alpha$, we can write $\langle b+z,w \rangle \leq \alpha$ for all $w \in F(\mathcal{D})$. Combining the latter with assertion (i) and {the perturbed Farkas lemma 1 (Theorem \ref{theo:fixz})}, one finds $\overline{x}\in P$ such that $A\overline{x}-b-z\in Q$ and $\la \overline{x},c\ra \le \alpha$, and thus
\begin{equation*}
    \min\{\langle x,c \rangle: Ax-b-z\in Q, x\in P\} \le \la\overline{x},c\ra \le \alpha
\end{equation*}
which justifies the equality in (ii).

Now we suppose (ii) holds and claim (i). Since we have
\begin{equation*}
    \inf\{\la x,c\ra : Ax-b-z\in Q,x\in P\} = \min \{\la x,c\ra : Ax-b-z\in Q,x\in P\} <\infty,
\end{equation*}
the feasible set $\{x\in P:Ax-b-z\in Q\}$ is nonempty and so is the set $\mathcal{H}\cap (\{z\}\times \R)$. Now observe via Theorem \ref{theo:fixz} that the identity in (i) is equivalent to the validation of [(a)$\Longleftrightarrow$(b)] for each $\alpha \in \R$ in Theorem~\ref{theo:fixz}. This tells us to fix any $\alpha \in \R$, and establish via assumption (ii) the equivalences
\begin{equation*}
\begin{aligned}
    &\la b+z,w \ra \le \alpha \quad \forall w\in Q^*,-A^*w+c\in P^* \\
    \Longleftrightarrow &\;\sup\{\langle b+z, w\rangle:w\in Q^*,-A^*w+c \in P^*\} \le \alpha \\
    \Longleftrightarrow &\;\min\{\langle x,c \rangle:Ax-b-z \in Q, x\in P\}\le \alpha \\
    \Longleftrightarrow &\;\exists\, x\in P,Ax-b-z\in Q:\; \la x,c\ra \le \alpha.
\end{aligned}
\end{equation*}
As [(a)$\Longleftrightarrow$(b)] in Theorem \ref{theo:fixz} is fulfilled for each $\alpha\in \R$, it follows from such theorem that the identity in (i) holds.
\end{proof}

As a consequence of Theorem \ref{strongduality1-z}, we obtain the following strong duality result which serves as an algebraic counterpart to Khanh et al. \cite[Theorem~4.4]{KhanhMoTrinh}. 

\begin{theorem} {\rm {\bf (Strong duality 1)}} \label{strongduality1}
Regarding the problems \eqref{P}-\eqref{D}, the following statements are equivalent:
	\begin{enumerate}
		\item[{\rm (i)}]$\mathcal{H}\cap (\{0_Z\}\times \R)=\mathcal{N}\cap(\{0_Z\}\times\R)$,
		\item[{\rm (ii)}] $\min\{\langle x,c \rangle:Ax-b \in Q, x\in P\} = \sup\{\langle b, w\rangle:-A^*w+c \in P^*, w\in Q^*\}.$
	\end{enumerate} 
\end{theorem}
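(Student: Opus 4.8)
The plan is to obtain Theorem \ref{strongduality1} as the special case $z:=0_Z$ of Theorem \ref{strongduality1-z}. The only formal discrepancy between the two statements is that condition (i) of Theorem \ref{strongduality1-z} carries the nonemptiness clause $\mathcal{H}\cap(\{z\}\times\R)\neq\emptyset$, whereas condition (i) here is phrased merely as the set equality $\mathcal{H}\cap(\{0_Z\}\times\R)=\mathcal{N}\cap(\{0_Z\}\times\R)$. Hence the first step is to show that at $z=0_Z$ this nonemptiness is automatic under the standing consistency assumption, so that the two forms of (i) coincide; after that the result is a direct substitution.

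To carry out that first step, one recalls that $F(\mathcal{P})\neq\emptyset$ by hypothesis, so there is $x_0\in P$ with $Ax_0-b\in Q$; taking in \eqref{Definition_H} the index $x=x_0$ together with $q:=Ax_0-b\in Q$ gives $(Ax_0-b-q,\langle x_0,c\rangle)=(0_Z,\langle x_0,c\rangle)\in\mathcal{H}$, so $\mathcal{H}\cap(\{0_Z\}\times\R)\neq\emptyset$. (By Proposition \ref{prop:MN}(i) the set $\mathcal{N}\cap(\{0_Z\}\times\R)$ is then nonempty as well, a fact also visible from $v_{\mathcal{D}}(0_Z)=\textup{val}(\mathcal{D})$ being finite.) Therefore the equality in (i) of the present theorem holds if and only if $\emptyset\neq\mathcal{H}\cap(\{0_Z\}\times\R)=\mathcal{N}\cap(\{0_Z\}\times\R)$, that is, if and only if condition (i) of Theorem \ref{strongduality1-z} holds for $z=0_Z$.

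It then remains only to note that condition (ii) of Theorem \ref{strongduality1-z} with $z=0_Z$ is verbatim condition (ii) here, since $Ax-b-0_Z=Ax-b$ and $\langle b+0_Z,w\rangle=\langle b,w\rangle$. Applying Theorem \ref{strongduality1-z} at $z=0_Z$ delivers the equivalence of (i) and (ii). I do not anticipate a genuine obstacle; the one point demanding care is exactly that the nonemptiness clause in Theorem \ref{strongduality1-z} is essential for general perturbations $z$ but becomes redundant at $z=0_Z$ precisely because $(\mathcal{P})$ is assumed consistent.
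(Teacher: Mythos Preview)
Your proposal is correct and matches the paper's own approach: the paper states Theorem~\ref{strongduality1} ``as a consequence of Theorem~\ref{strongduality1-z}'' without further detail, and you have correctly identified and handled the one subtlety, namely that the nonemptiness clause in Theorem~\ref{strongduality1-z}(i) is automatically satisfied at $z=0_Z$ by the standing consistency assumption $F(\mathcal{P})\neq\emptyset$.
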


Again, we replace $\mathcal{H}$ by $\mathcal{K}$ to acquire the second perturbed and unperturbed strong duality results. 

\begin{theorem}[Perturbed strong duality 2]\label{strongduality2-y} Consider the problems \eqref{P}-\eqref{D} and fix $y\in Y$.
The following statements are equivalent:
	\begin{enumerate}
		\item[{\rm (i)}]$\emptyset \neq \mathcal{K}\cap (\{y\}\times \R)=\mathcal{M}\cap(\{y\}\times\R)$,
		\item[{\rm (ii)}] $\inf\{\langle x,c - y\rangle:Ax-b\in Q, x\in P\} = \max\{\langle b,w\rangle:-A^*w+c - y\in P^*, w\in Q^*\}$.
	\end{enumerate} 
\end{theorem}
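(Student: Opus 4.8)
The plan is to run the proof of Theorem~\ref{strongduality1-z} with the primal and dual roles interchanged: replace $\mathcal{H}$ by $\mathcal{K}$, $\mathcal{N}$ by $\mathcal{M}$, $v_{\mathcal{D}}$ by $v_{\mathcal{P}}$, and invoke the perturbed Farkas lemma~2 (Theorem~\ref{theo:fixy}) wherever that proof uses Theorem~\ref{theo:fixz}. It is convenient to abbreviate $v_{\mathcal{P}}(y)=\inf\{\la x,c-y\ra:Ax-b\in Q,\ x\in P\}$ and $s(y):=\sup\{\la b,w\ra:-A^*w+c-y\in P^*,\ w\in Q^*\}$; reading off \eqref{Definition_K} one has $s(y)=\sup\{r\in\R:(y,r)\in\mathcal{K}\}$, the $y$-parametrized version of \eqref{valD2}. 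The first ingredient I would record is the weak-duality estimate obtained exactly as in the proof of Proposition~\ref{prop:MN}: for any $x\in P$ with $Ax-b\in Q$ and any $w\in Q^*$ with $-A^*w+c-y\in P^*$ we have $\la b,w\ra\le\la Ax,w\ra=\la x,A^*w\ra\le\la x,c-y\ra$, hence $s(y)\le v_{\mathcal{P}}(y)$.

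For [(i)$\Longrightarrow$(ii)] I would first verify that $v_{\mathcal{P}}(y)\in\R$: the prefix $\mathcal{K}\cap(\{y\}\times\R)\ne\emptyset$ supplies a feasible point of the perturbed dual, so $s(y)>-\infty$; the standing assumption $F(\mathcal{P})\ne\emptyset$ gives $v_{\mathcal{P}}(y)<+\infty$; combined with $s(y)\le v_{\mathcal{P}}(y)$ this forces $v_{\mathcal{P}}(y)\in\R$. Hence $(y,v_{\mathcal{P}}(y))\in\mathcal{M}\cap(\{y\}\times\R)$, which by (i) equals $\mathcal{K}\cap(\{y\}\times\R)$; unravelling \eqref{Definition_K} yields $\hat w\in Q^*$ with $-A^*\hat w+c-y\in P^*$ and $\la b,\hat w\ra\ge v_{\mathcal{P}}(y)$. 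Since also $\la b,\hat w\ra\le s(y)\le v_{\mathcal{P}}(y)$, all three quantities coincide, so the supremum defining the perturbed dual is attained at $\hat w$ and equals the perturbed primal infimum, which is precisely (ii). (Alternatively, once $v_{\mathcal{P}}(y)\in\R$ one may set $\alpha:=v_{\mathcal{P}}(y)$ in Theorem~\ref{theo:fixy}: assertion~(a) there holds by the definition of the infimum, hence by (i) assertion~(b) holds and supplies $\hat w$ directly.)

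For [(ii)$\Longrightarrow$(i)] I would read the ``$\max$'' on the right of (ii) as providing an optimal $\hat w\in Q^*$ with $-A^*\hat w+c-y\in P^*$; then $(y,\la b,\hat w\ra)\in\mathcal{K}$, so $\mathcal{K}\cap(\{y\}\times\R)\ne\emptyset$ and the common value in (ii) is finite. By Theorem~\ref{theo:fixy}, the identity $\mathcal{K}\cap(\{y\}\times\R)=\mathcal{M}\cap(\{y\}\times\R)$ is equivalent to the equivalence [(a)$\Longleftrightarrow$(b)] there holding for every $\alpha\in\R$, so I would fix $\alpha\in\R$ and chain
\[
\text{(a)}\ \Longleftrightarrow\ v_{\mathcal{P}}(y)\ge\alpha\ \overset{\text{(ii)}}{\Longleftrightarrow}\ s(y)\ge\alpha\ \Longleftrightarrow\ \text{(b)},
\]
where the first equivalence uses $F(\mathcal{P})\ne\emptyset$ and the last uses that $s(y)$ is attained. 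This gives $\mathcal{K}\cap(\{y\}\times\R)=\mathcal{M}\cap(\{y\}\times\R)$, and combined with its nonemptiness it yields (i).

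The only point requiring care — and the real obstacle — is guaranteeing that the perturbed value $v_{\mathcal{P}}(y)$ (equivalently $s(y)$) is an honest real number before Theorem~\ref{theo:fixy} is applied, since that result is stated only for $\alpha\in\R$. This is exactly where the nonemptiness prefix ``$\emptyset\ne$'' in (i), respectively the attainment implicit in ``$\max$'' in (ii), is used; without them the perturbed optimal value could be $\pm\infty$ and the Farkas-lemma bridge would not apply. Everything else is the routine weak-duality bookkeeping already carried out in the primal case of Theorem~\ref{strongduality1-z}.
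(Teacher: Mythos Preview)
Your proposal is correct and follows exactly the route the paper intends: the paper does not spell out a separate proof for Theorem~\ref{strongduality2-y} but simply states that one ``replace[s] $\mathcal{H}$ by $\mathcal{K}$'' and reruns the argument of Theorem~\ref{strongduality1-z}, invoking Theorem~\ref{theo:fixy} in place of Theorem~\ref{theo:fixz}. Your write-up is in fact slightly more streamlined in the (i)$\Rightarrow$(ii) direction---you read off both attainment and equality from the single membership $(y,v_{\mathcal{P}}(y))\in\mathcal{K}$, whereas the paper's primal proof derives attainment and the strong-duality equality in two separate passes---but the underlying mechanism is identical.
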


\begin{theorem} {\rm {\bf (Strong duality 2, cf. \cite[Corollary~4.2]{KhanhMoTrinh})}} \label{strongduality2}Regarding the problems \eqref{P}-\eqref{D}, the following statements are equivalent:
	\begin{enumerate}
		\item[{\rm (i)}]$\mathcal{K}\cap (\{0_Y\}\times \R)=\mathcal{M}\cap(\{0_Y\}\times\R)$,
		\item[{\rm (ii)}]$\inf\{\langle x,c \rangle:Ax-b\in Q, x\in P\} = \max\{\langle b,w\rangle:-A^*w+c \in P^*, w\in Q^*\}.$
	\end{enumerate} 
\end{theorem}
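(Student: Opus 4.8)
The plan is to obtain Theorem~\ref{strongduality2} as the unperturbed instance $y := 0_Y$ of the perturbed strong duality result Theorem~\ref{strongduality2-y}, exactly in the way Theorem~\ref{strongduality1} is deduced from Theorem~\ref{strongduality1-z}. The only subtlety is that condition~(i) of Theorem~\ref{strongduality2-y} carries an extra nonemptiness clause, which must be shown to be automatically satisfied at the base point $0_Y$.

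First I would record the purely notational observation that substituting $y = 0_Y$ into condition~(ii) of Theorem~\ref{strongduality2-y} reproduces condition~(ii) of the present statement verbatim, since $\langle x, c - 0_Y\rangle = \langle x, c\rangle$ and $-A^*w + c - 0_Y = -A^*w + c$; in particular the ``$\max$'' on the dual side (that is, attainment of the supremum) is inherited with no change. It therefore remains only to reconcile the two forms of condition~(i): the one in Theorem~\ref{strongduality2-y} reads $\emptyset \neq \mathcal{K}\cap(\{0_Y\}\times\R) = \mathcal{M}\cap(\{0_Y\}\times\R)$, whereas here the nonemptiness is dropped.

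Next I would verify that, under our standing assumption that $(\mathcal{D})$ is consistent, the slice $\mathcal{K}\cap(\{0_Y\}\times\R)$ is never empty: for any $w_0\in F(\mathcal{D})$ we have $-A^*w_0+c\in P^*$, hence $0_Y\in -A^*w_0+c-P^*$, and since $\langle b,w_0\rangle\le\langle b,w_0\rangle$ the definition \eqref{Definition_K} of $\mathcal{K}$ yields $(0_Y,\langle b,w_0\rangle)\in\mathcal{K}$. (Equivalently, \eqref{valD2} together with Theorem~\ref{theo:weak-dual} shows that $(0_Y,\textup{val}(\mathcal{D}))\in\mathcal{K}$ with $\textup{val}(\mathcal{D})$ finite.) Consequently condition~(i) of the present theorem is equivalent to condition~(i) of Theorem~\ref{strongduality2-y} taken at $y=0_Y$, and the claimed equivalence (i)$\Longleftrightarrow$(ii) follows immediately from that theorem.

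There is essentially no genuine obstacle here, since all of the substantive content --- the passage through the perturbed Farkas lemma (Theorem~\ref{theo:fixy}) and the proof that the dual supremum is attained --- is already carried out inside Theorem~\ref{strongduality2-y}. The only point requiring a moment's care is the bookkeeping described above: checking that the feasibility hypothesis of the perturbed theorem becomes vacuous at the unperturbed point $0_Y$ thanks to consistency of $(\mathcal{D})$, and confirming that the attainment (``$\max$'') assertion transfers unchanged to the unperturbed statement.
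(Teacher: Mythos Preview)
Your proposal is correct and follows exactly the approach the paper intends: the unperturbed Theorem~\ref{strongduality2} is obtained from Theorem~\ref{strongduality2-y} by setting $y:=0_Y$, with the nonemptiness clause in (i) being automatic at $0_Y$ thanks to the standing consistency assumption on $(\mathcal{D})$. The paper gives no explicit proof of Theorem~\ref{strongduality2}, treating it as an immediate specialization in parallel with the passage from Theorem~\ref{strongduality1-z} to Theorem~\ref{strongduality1}, and your write-up supplies precisely the missing bookkeeping.
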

 
\section{Algebraic sufficient conditions for (D) and (D$^*$)}\label{sec:algebra}
In this section, let $X$ and $Z$ denote real vector spaces.  Let us consider next the \textit{algebraic duals} $$
{X^*}:=\{f:X\rightarrow \R:f\text{ is linear}\}
$$
of $X$ and ${Z^*}$ of $Z$. Note that $(X,X^*)$ and $(Z,Z^*)$ are dual pairs of vector spaces as defined in Section \ref{sec:prelim}. Let $A$ be a linear map acting from $X$ to $Z$. The \textit{adjoint} $A^*:Z^*\rightarrow X^*$ of $A$ is a linear map defined using the relation 
\begin{equation*}
    \langle x, A^*z^*\rangle=\langle Ax,z^*\rangle\quad \forall x\in X,\;\forall z^*\in Z^*.
\end{equation*}
We consider here the problems $(\mathcal{P})$ and $(\mathcal{D})$ from \eqref{P}-\eqref{D}, only replacing $Y$ by $X^*$ and $W$ by $Z^*$ for later technical purposes. 

The \textit{subspace spanned by $S\subset X$}, denoted by $\spa S$, is the smallest subspace of $X$ containing $S$. Given a subset $S\subset X$, the \textit{algebraic core} \cite[p.~7]{Holmes} of $S$ in $X$, denoted by $\cor S$, is defined as
\begin{equation*}
    \cor S:=\{x\in X: \forall y\in X,\; \exists\, t_0> 0 \text{ such that } x+ty\in S \text{ whenever } 0\le t\le t_0\}.
\end{equation*}
We may employ instead an equivalent representation:
\begin{equation}\label{eq:core}
    \cor S:=\{x\in X: \forall y\in X,\; \exists\, t_0> 0 \text{ such that } x+ty\in S \text{ whenever } |t|\le t_0\}.
\end{equation}
One easily checks that
\begin{equation*}
    \cor (-S) = - \cor S.
\end{equation*}

\begin{lemma}\label{lem:core} In a product space $X\times Y$, $\cor (A\times B) =  \cor A \times \cor B$.
\end{lemma}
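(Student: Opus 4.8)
The plan is to establish the two inclusions separately, working directly from the equivalent representation \eqref{eq:core} of the algebraic core, which is convenient because it allows us to test directions symmetrically (i.e., $|t|\le t_0$ rather than $0\le t\le t_0$).

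For the inclusion $\cor(A\times B)\subset \cor A\times\cor B$, I would fix $(a,b)\in\cor(A\times B)$ and show $a\in\cor A$; the verification that $b\in\cor B$ is entirely symmetric. Given an arbitrary $x\in X$, I apply the defining property of $\cor(A\times B)$ to the direction $(x,0_Y)\in X\times Y$: there is $t_0>0$ such that $(a,b)+t(x,0_Y)\in A\times B$ whenever $|t|\le t_0$. Reading off the first coordinate gives $a+tx\in A$ for all $|t|\le t_0$, which is exactly the condition required for $a\in\cor A$.

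For the reverse inclusion $\cor A\times\cor B\subset\cor(A\times B)$, I would fix $a\in\cor A$, $b\in\cor B$, and an arbitrary direction $(x,y)\in X\times Y$. Applying the definition of $\cor A$ to $x$ yields some $t_1>0$ with $a+tx\in A$ for $|t|\le t_1$, and applying the definition of $\cor B$ to $y$ yields some $t_2>0$ with $b+ty\in B$ for $|t|\le t_2$. Setting $t_0:=\min\{t_1,t_2\}>0$, for every $|t|\le t_0$ we have $(a,b)+t(x,y)=(a+tx,b+ty)\in A\times B$, so $(a,b)\in\cor(A\times B)$.

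There is no genuine obstacle here: one inclusion follows by testing the special directions $(x,0_Y)$ and $(0_X,y)$, and the other by intersecting the two ranges of admissible parameters. The only mild subtlety is that the core is defined using \emph{all} directions of the ambient space, so in the first inclusion it is essential to feed in an arbitrary $x$ through the product direction $(x,0_Y)$, rather than only directions confined to a subspace; once this is observed, the proof is immediate.
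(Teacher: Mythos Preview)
Your proof is correct and follows essentially the same route as the paper's. The only cosmetic differences are that for the inclusion $\cor(A\times B)\subset\cor A\times\cor B$ the paper applies the core definition to an arbitrary direction $(x,y)$ rather than your special choice $(x,0_Y)$, and for the reverse inclusion the paper merely says it is ``verified in the same manner'' while you spell out the $\min\{t_1,t_2\}$ argument explicitly.
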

\begin{proof}
We fix $(a,b)\in \cor (A \times B) \subset X\times Y$ and claim that $a\in \cor A$ and $b\in \cor B$. To this end, let $x\in X$ and $y\in Y$ be arbitrarily. As $(x,y)\in X\times Y$ and $(a,b)\in \cor (A\times B)$, by \eqref{eq:core} there exists $t_0>0$ such that 
\begin{equation*}
    (a,b)+t(x,y)\in A\times B \ \text{ whenever }\ |t|\le t_0,
\end{equation*}
i.e., $a+tx\in A$ and $b+ty\in B$ for all $t \in [-t_0,t_0]$. The latter assertion implies that $a\in \cor A$ and $b\in \cor B$ which justify the inclusion $(a,b)\in \cor A \times \cor B$. The opposite inclusion is verified in the same manner.
\end{proof}

According to \cite[p.~15,~\S4]{Holmes}, a linear functional $f\in X^*$ is said to \textit{separate} $A,B\subset X$ if there exists $\alpha \in \R$ such that we have either $A\subset \{x:f(x) \leq \alpha\}$ and $B\subset \{x:f(x) \geq \alpha\}$ or $A\subset \{x:f(x)\geq \alpha\}$ and $B\subset \{x:f(x)\leq \alpha\}$. The following \textit{point-set separation} argument plays an essential role in our main result later.
\begin{theorem}{\rm {\bf (see \cite[Theorem~2.1.3]{BotGradWanka} or \cite[Corollary~4B]{Holmes})}}\label{theo:sepacore} Let $E$ and $F$ be nonempty convex subsets of a vector space $X$ with $ \cor E \neq \emptyset$. Then $E$ and $F$ can be separated by a nonzero linear functional on $X$ if and only if $F \cap \cor E = \emptyset$.	
\end{theorem}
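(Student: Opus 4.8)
The plan is to prove the two implications separately; the direction ``separation $\Rightarrow$ $F\cap\cor E=\emptyset$'' is routine, while the converse carries the real content. For the routine direction, suppose $f\neq 0$ separates $E$ and $F$, say $f(e)\le\alpha\le f(g)$ for all $e\in E$, $g\in F$ (the reversed case is symmetric). If there were some $x\in F\cap\cor E$, pick $u\in X$ with $f(u)\neq0$, which exists since $f\neq0$; since $x\in\cor E$, \eqref{eq:core} gives $x+tu\in E$ for all small $|t|$, so $f(x)+tf(u)\le\alpha$ for $t$ of both signs, forcing $f(x)<\alpha$ and contradicting $f(x)\ge\alpha$. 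Hence $F\cap\cor E=\emptyset$.

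For the converse, assume $F\cap\cor E=\emptyset$ and reduce to a point/set separation. Put $C:=\cor E-F$: this is convex (both $\cor E$ and $F$ are convex) and $0\notin C$. Next I would show $\cor C=C$: the elementary inclusion $\cor A-B\subseteq\cor(A-B)$ (translate a witnessing segment for a point of $\cor A$ by $-b$) combined with the idempotency $\cor(\cor E)=\cor E$, valid for the convex set $E$ with $\cor E\neq\emptyset$, yields $C=\cor(\cor E)-F\subseteq\cor C$; and $\cor C\subseteq C$ is immediate from the definition. Thus $C$ is convex with $\cor C=C\neq\emptyset$ and $0\notin C$, and it suffices to separate $C$ from $\{0\}$ by a nonzero linear functional.

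To do this I would run the algebraic Hahn--Banach argument through a Minkowski gauge. Fix $c_0\in\cor C$ and set $K:=C-c_0$, so $0\in\cor K$ and $-c_0\notin K$. Define $p(x):=\inf\{t>0:t^{-1}x\in K\}$; because $0\in\cor K$, the gauge $p$ is finite-valued, positively homogeneous and subadditive, satisfies $p\le1$ on $K$, and, by convexity of $K$ together with $0\in K$, obeys $p(-c_0)\ge1$. On $\spa\{c_0\}$ define the linear functional $g$ by $g(-\lambda c_0):=\lambda\,p(-c_0)$ and check $g\le p$ there; extend $g$ to a linear functional $f$ on $X$ with $f\le p$ everywhere by the analytic Hahn--Banach theorem, which needs no topology. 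Then $f\le1$ on $K$ and $f(-c_0)\ge1$; unwinding $K=C-c_0$ gives $f(c_0)\le-1$ and $f(c)\le0$ for every $c\in C$, so $f\neq0$ and $f$ separates $0$ from $C$ with constant $0$.

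Finally I would transfer the conclusion to $E$ and $F$. From $f\le0$ on $C=\cor E-F$ we obtain $\sup_{\cor E}f\le\inf_F f$; choose $\alpha$ between these two values. It remains to pass from $\cor E$ to $E$: for $e\in E$ and $e_0\in\cor E$ the half-open segment $[e_0,e[$ is contained in $\cor E$ (the segment principle for algebraic cores), so by linearity $f(e)=\lim_{t\uparrow1}f\big((1-t)e_0+te\big)\le\alpha$, whence $E\subset\{f\le\alpha\}$ and $F\subset\{f\ge\alpha\}$. I expect the genuine obstacle to be this point/set core step, not the bookkeeping: finiteness and sublinearity of $p$ from $0\in\cor K$, the idempotency of $\cor$ on convex sets, and the segment principle are precisely the facts that stand in for the topological notions of interior and closure, and each must be established purely algebraically (or else imported wholesale from Holmes \cite{Holmes}).
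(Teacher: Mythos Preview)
Your proof is correct. The paper does not give its own proof of this theorem: it is stated with references to \cite[Theorem~2.1.3]{BotGradWanka} and \cite[Corollary~4B]{Holmes} and used as a black box, so there is no in-paper argument to compare against. Your route---reduce to separating $0$ from $C=\cor E-F$, build the Minkowski gauge of $K=C-c_0$ from $0\in\cor K$, Hahn--Banach extend, and finish via the segment principle to pass from $\cor E$ to $E$---is exactly the standard algebraic proof found in Holmes, and every step (convexity and idempotency of $\cor$ on convex sets, $\cor A-B\subset\cor(A-B)$, finiteness and sublinearity of $p$, $p(-c_0)\ge1$ from $-c_0\notin K$ and $0\in K$) is justified.
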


Fix $z \in \dom v_{\mathcal{D}}$ and define a perturbation of $\mathcal{K}$ in \eqref{Definition_K} as 
\begin{eqnarray}\label{set-T}
	{\mathcal{K}_z :=\displaystyle\bigcup_{w\,\in\, Q^*}\big((-A^*w+c-P^*)\,\times\, ]-\infty,\langle b+z,w\rangle]\big).}
\end{eqnarray}

\begin{proposition}\label{prop:properties-set-T}
Let $z \in \dom v_{\mathcal{D}}$. Then we have the following assertions:
   \begin{enumerate}
   	\item[{\rm (i)}] $\mathcal{K}_z$ is a nonempty convex subset of $X^*\times \R$ and there exists $\bar{w}\in Q^*$ such that
    \begin{equation}\label{eq:coreKz}
    	{(-\cor P^*)} \times \big ]- \infty, \langle b + z, \bar w \rangle\big[ \subset \cor \K_z,
    \end{equation}
   	\item[{\rm (ii)}] $(0_{X^*}, v_{\mathcal{D}}(z)) \notin \cor \K_z$,
   	\item[{\rm (iii)}] For any $\epsilon>0$, we have  $(0_{X^*}, v_{\mathcal{D}}(z) - \epsilon) \in  \K_z$,
   	\item[{\rm (iv)}] {Suppose that 
\begin{equation}\label{condition-core-Pstar}
\exists\,   \bar w \in Q^* : - A^*\bar w + c \in \cor P^*. 
\end{equation}}
Then for any $y \in X^*$ there exists $\delta_0 >0$  such that  $\left(\delta y, \langle b + z, \bar w \rangle\right) \in  \K_z$ whenever $|\delta|\leq\delta_0$.
   	\end{enumerate}    
\end{proposition}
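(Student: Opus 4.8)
The plan is to establish the four assertions in order, the genuinely substantive one being the core inclusion in~(i); the rest are short unpackings of definitions together with the supremum characterization of $v_{\mathcal{D}}$ and the equivalent form \eqref{eq:core} of the algebraic core.

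\emph{Part (i).} First I would record the representation $\mathcal{K}_z=\{(-A^*w+c-p,\ \langle b+z,w\rangle-s):w\in Q^*,\ p\in P^*,\ s\ge 0\}$, obtained by unfolding \eqref{set-T}. This exhibits $\mathcal{K}_z$ as the image of the convex cone $Q^*\times P^*\times\R_+$ under the affine map $(w,p,s)\mapsto(-A^*w-p+c,\ \langle b+z,w\rangle-s)$, so $\mathcal{K}_z$ is convex, and it is nonempty since $(c,0)$ lies in it (take $w=0$, $p=0$, $s=0$). For the core inclusion I would pick $\bar w\in F(\mathcal{D})$, which is nonempty by the standing assumption, so that $\bar w\in Q^*$ and $-A^*\bar w+c\in P^*$; note the asserted inclusion is vacuous unless $\cor P^*\neq\emptyset$. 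Fixing $\bar p\in\cor P^*$ and $\rho<\langle b+z,\bar w\rangle$, I must show $(-\bar p,\rho)\in\cor\mathcal{K}_z$, and for this it suffices, working only along the slice $w=\bar w$, to produce for each direction $(y,t)\in X^*\times\R$ a radius $\tau_0>0$ such that $-A^*\bar w+c+\bar p-\tau y\in P^*$ and $\rho+\tau t\le\langle b+z,\bar w\rangle$ whenever $|\tau|\le\tau_0$. The second condition holds for $\tau_0\le(\langle b+z,\bar w\rangle-\rho)/|t|$ (any $\tau_0$ if $t=0$). For the first, I would use the elementary fact that for a convex cone $C$ one has $C+\cor C\subset\cor C$: indeed $C+C\subset C$, and adding a fixed element of $C$ preserves the ``all directions'' property of $\cor C$. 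Applied to $C=P^*$ this gives $-A^*\bar w+c+\bar p\in\cor P^*$, and then \eqref{eq:core} applied to this point in the direction $-y$ produces the required radius. Taking the minimum of the two bounds completes~(i). This is the main obstacle: the reduction of core membership to a two-constraint feasibility statement along a single slice, plus the bookkeeping of the two radii and the auxiliary cone fact.

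\emph{Parts (ii) and (iii).} For~(ii) I would argue by contradiction: if $(0_{X^*},v_{\mathcal{D}}(z))\in\cor\mathcal{K}_z$, then moving in the direction $(0_{X^*},1)$ forces $(0_{X^*},v_{\mathcal{D}}(z)+t_0)\in\mathcal{K}_z$ for some $t_0>0$; unpacking membership yields $w\in Q^*$, $p\in P^*$, $s\ge 0$ with $-A^*w+c-p=0_{X^*}$ and $\langle b+z,w\rangle-s=v_{\mathcal{D}}(z)+t_0$. The first equality gives $-A^*w+c=p\in P^*$, hence $w\in F(\mathcal{D})$ and $\langle b+z,w\rangle\le v_{\mathcal{D}}(z)$, and combining with the second equality gives $v_{\mathcal{D}}(z)+t_0\le v_{\mathcal{D}}(z)$, a contradiction. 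For~(iii), since $z\in\dom v_{\mathcal{D}}$ the value $v_{\mathcal{D}}(z)$ is finite, so given $\epsilon>0$ there is $w\in F(\mathcal{D})$ with $\langle b+z,w\rangle>v_{\mathcal{D}}(z)-\epsilon$; then $p:=-A^*w+c\in P^*$ and $s:=\langle b+z,w\rangle-(v_{\mathcal{D}}(z)-\epsilon)\ge 0$ give $(-A^*w+c-p,\langle b+z,w\rangle-s)=(0_{X^*},v_{\mathcal{D}}(z)-\epsilon)$, which therefore lies in $\mathcal{K}_z$.

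\emph{Part (iv).} Here hypothesis \eqref{condition-core-Pstar} supplies $\bar w\in Q^*$ with $-A^*\bar w+c\in\cor P^*$, and in particular $\bar w\in F(\mathcal{D})$. Given $y\in X^*$, I would apply \eqref{eq:core} to the point $-A^*\bar w+c$ in the direction $-y$ to get $\delta_0>0$ with $-A^*\bar w+c-\delta y\in P^*$ for all $|\delta|\le\delta_0$, equivalently $\delta y\in-A^*\bar w+c-P^*$. Since trivially $\langle b+z,\bar w\rangle\le\langle b+z,\bar w\rangle$, the pair $(\delta y,\langle b+z,\bar w\rangle)$ lies in the slice of $\mathcal{K}_z$ indexed by $w=\bar w$, which proves~(iv).
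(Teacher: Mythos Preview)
Your proof is correct and, for parts (ii)--(iv), essentially identical to the paper's. In part~(i) the overall strategy is the same (affine-image representation for convexity; use $\bar w\in F(\mathcal{D})$ to produce the core inclusion), but the execution differs slightly: the paper first establishes the set-level inclusion $-P^*\times\,]-\infty,\langle b+z,\bar w\rangle]\subset\mathcal{K}_z$ and then passes to cores via Lemma~\ref{lem:core} (core of a product equals product of cores), whereas you verify core membership of each $(-\bar p,\rho)$ directly, invoking instead the elementary fact $C+\cor C\subset\cor C$ for convex cones. Both routes are short; the paper's avoids the small case split on the two radii, while yours avoids appealing to the product-core lemma.
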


\begin{proof}
First we show that {\rm (i)} is true. Indeed, since $F(\mathcal{D})$ is nonempty, there exists $ \bar w \in Q^*$ such that $- A^*\bar w + c \in P^*$, i.e., $0_{X^*} \in  -A^*\bar w + c -  P^*$. Hence, $\big(0_{X^*}, \langle b + z, \bar w \rangle\big) \in \K_z$ and $\K_z$ is nonempty. The convexity of $\K_z$ follows from the fact that 
\begin{equation*}
    \K_z = L_z (Q^*\times P^*\times \R_+) + \{(c,0)\},
\end{equation*}
in which $L_z: Z^*\times X^* \times \R \to X^* \times \R$ is given by
\begin{equation*}
    L_z (w,p,\alpha):= (-A^*w -p, \la b+z,w\ra - \alpha),\quad \forall (w,p,\alpha)\in Z^*\times X^* \times \R,
\end{equation*}
is a linear map.

We next verify the inclusion \eqref{eq:coreKz}. Take arbitrarily $y \in P^*$. Since $P^*$ is a convex cone and $0_{X^*} \in  -A^*\bar w + c - P^*$, we have  
\begin{equation}\label{prop:properties-set-T-equa3}
-y \in  -A^* \bar w + c - y -  P^* \subset -A^* \bar w +  c -  P^*.
\end{equation}
This gives rise to $(-y, \alpha) \in \K_z$ for all $\alpha \leq \langle b + z, \bar w \rangle$. 
Further, this inclusion holds for any $y \in P^*$.
Hence, 
\begin{equation*}
	-P^* \times \big]- \infty, \langle b + z, \bar w \rangle\big] \subset \K_z.
\end{equation*}
According to this and Lemma \ref{lem:core}, one has
\begin{equation*}
	(-\cor P^*) \times \big] - \infty, \langle b + z, \bar w \rangle\big[ = \cor \big(-P^* \, \times\, ] - \infty, \langle b + z, \bar w \rangle\big]\big) \subset \cor \K_z.
\end{equation*} 

To verify {\rm (ii)}, suppose contrary to our claim that $(0_{X^*}, v_{\mathcal{D}}(z)) \in \cor \K_z$. Then by the definition of $\cor \K_z$, we can find $\epsilon > 0$ such that 
\begin{equation*}
(0_{X^*}, v_{\mathcal{D}}(z)) +	\epsilon \big(0_{X^*}, 1\big) \in \K_z.
\end{equation*}   
Then \eqref{set-T} lends us some $\widetilde{w} \in Q^*$ such that $- A^* \widetilde{w} +c \in  P^* $ and $\langle b + z, \widetilde{w} \rangle \geq v_{\mathcal{D}}(z) + \epsilon,$ which clearly contradicts the definition of $v_{\mathcal{D}}(z)$ in \eqref{Dy}.
 
We proceed to show that {\rm (iii)} holds. Indeed, for any $\e>0$, the definition of $v_{\mathcal{D}}$ from \eqref{Dy} lends us some $w\in Q^*$ such that $0_{X^*}\in -A^*w + c - P^*$ and $\la b+z,w\ra > v_{\mathcal{D}}(z)-\e$. And thus, 
$(0_{X^*}, v_{\mathcal{D}}(z) - \epsilon) \in  \K_z$.

Lastly, we verify {\rm (iv)} by taking an arbitrary $y \in X^*$. Using \eqref{condition-core-Pstar}, we find $\delta_0>0$ such that whenever $|\delta|\leq\delta_0$, there holds the inclusion  
$$
	 - A^*\bar w + c - \delta y\in  P^*,
$$
i.e.,
$$
	\delta y \in  -A^* \bar w +  c -  P^*.
$$
Consequently, {\rm (iv)} is true. 
\end{proof}

The \textit{natural embedding} from a vector space $X$ to its algebraic bidual $X^{**}$ is defined as $i_X:X\to X^{**}$, where  
\begin{equation*}
    \la x^*,i_X(x) \ra:= \la x,x^*\ra \quad \forall x^* \in X^*
\end{equation*}
for each $x\in X$.

\begin{theorem}\label{theo:main-algebra}
If condition (\ref{condition-core-Pstar}) holds and $(P^{*})^*=i_X (P)$, then $\mathcal{H}=\mathcal{N}$.
\end{theorem}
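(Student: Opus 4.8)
The plan is to prove the inclusion $\mathcal{N}\subseteq\mathcal{H}$, the reverse being Proposition~\ref{prop:MN}(i). So I would fix $(z,r)\in\mathcal{N}=\epi v_{\mathcal{D}}$, i.e. $z\in\dom v_{\mathcal{D}}$ and $r\ge v_{\mathcal{D}}(z)$, and aim to produce \emph{one} $\bar x\in P$ with $A\bar x-b-z\in Q$ and $\la\bar x,c\ra\le v_{\mathcal{D}}(z)$. Such an $\bar x$ settles the matter: with $q:=A\bar x-b-z\in Q$ one has $z\in A\bar x-b-Q$ and, since $r\ge v_{\mathcal{D}}(z)\ge\la\bar x,c\ra$, the pair $(z,r)$ lies in $(A\bar x-b-Q)\times[\la\bar x,c\ra,+\infty[\,\subseteq\mathcal{H}$. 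This existence statement is exactly implication [(a)$\Longrightarrow$(b)] of Theorem~\ref{theo:fixz} at $\alpha=v_{\mathcal{D}}(z)$, but I cannot just invoke that theorem, since its hypothesis is the conclusion I am chasing; the vector $\bar x$ has to be exhibited by hand.

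To exhibit it, the idea is to separate the singleton $\{(0_{X^*},v_{\mathcal{D}}(z))\}$ from the convex set $\mathcal{K}_z$ of \eqref{set-T} by means of Theorem~\ref{theo:sepacore}. Proposition~\ref{prop:properties-set-T}(i) gives that $\mathcal{K}_z$ is convex and, because \eqref{condition-core-Pstar} forces $\cor P^*\neq\emptyset$, has nonempty algebraic core, while Proposition~\ref{prop:properties-set-T}(ii) gives $(0_{X^*},v_{\mathcal{D}}(z))\notin\cor\mathcal{K}_z$. Theorem~\ref{theo:sepacore} then yields a nonzero linear functional on $X^*\times\R$, i.e. a pair $(\xi,\beta)\in X^{**}\times\R$ (the algebraic dual of $X^*\times\R$) acting by $(u,t)\mapsto\la u,\xi\ra+\beta t$, together with $\gamma\in\R$ such that $\la u,\xi\ra+\beta t\le\gamma\le\beta v_{\mathcal{D}}(z)$ for all $(u,t)\in\mathcal{K}_z$. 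Inserting the points $(0_{X^*},v_{\mathcal{D}}(z)-\epsilon)\in\mathcal{K}_z$ furnished by Proposition~\ref{prop:properties-set-T}(iii) and letting $\epsilon\downarrow0$ forces $\gamma=\beta v_{\mathcal{D}}(z)$ and $\beta\ge0$.

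The step where condition \eqref{condition-core-Pstar} is genuinely used is ruling out $\beta=0$. If $\beta=0$ then $\gamma=0$ and $\la u,\xi\ra\le0$ for every first coordinate $u$ occurring in $\mathcal{K}_z$; but Proposition~\ref{prop:properties-set-T}(iv) provides, for each $y\in X^*$, an entire symmetric segment $\{\delta y:|\delta|\le\delta_0\}$ of such first coordinates, which forces $\la y,\xi\ra=0$ for all $y\in X^*$, i.e. $\xi=0$, contradicting $(\xi,\beta)\neq(0,0)$. Hence $\beta>0$; normalizing $\beta=1$ and unwinding \eqref{set-T} gives
\begin{equation*}
\la -A^*w+c-p,\xi\ra+\la b+z,w\ra\le v_{\mathcal{D}}(z)\quad\forall w\in Q^*,\;\forall p\in P^*.
\end{equation*}
Fixing one $w$ with $-A^*w+c\in P^*$ and applying Lemma~\ref{auxili} to the cone $P^*$ (in the dual pair $(X^*,X^{**})$) gives $\la p,\xi\ra\ge0$ for all $p\in P^*$, i.e. $\xi\in(P^*)^*$; by hypothesis $(P^*)^*=i_X(P)$, so $\xi=i_X(\bar x)$ for some $\bar x\in P$. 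Putting $\xi=i_X(\bar x)$ and $p=0_{X^*}$ into the display, and using $\la A^*w,i_X(\bar x)\ra=\la A\bar x,w\ra$, rewrites it as $\la A\bar x-b-z,w\ra\ge\la\bar x,c\ra-v_{\mathcal{D}}(z)$ for all $w\in Q^*$; a second application of Lemma~\ref{auxili}, now to the cone $Q^*$, simultaneously delivers $\la\bar x,c\ra\le v_{\mathcal{D}}(z)$ and $A\bar x-b-z\in(Q^*)^*$.

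I expect the \emph{main obstacle} to be precisely this last clause: the separation machinery only produces ``dual-side'' information about $A\bar x-b-z$, so it naturally lands in the bidual cone $Q^{**}=(Q^*)^*$ rather than in $Q$ itself, and closing the argument needs $Q=Q^{**}$ --- automatic in the equality-constrained case $Q=\{0_Z\}$, to which, as recalled in Section~\ref{sec:prelim}, the general problem reduces without loss of generality, but a genuine extra requirement in general. Granting it, $A\bar x-b-z\in Q$ together with $\la\bar x,c\ra\le v_{\mathcal{D}}(z)$ yields $(z,r)\in\mathcal{H}$. Everything else --- the core computations, the separation, and the sign analysis of $\beta$ --- is a careful but routine unpacking of Proposition~\ref{prop:properties-set-T} and Theorem~\ref{theo:sepacore}.
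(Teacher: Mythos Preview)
Your approach is essentially identical to the paper's own proof: separate $(0_{X^*},v_{\mathcal{D}}(z))$ from $\mathcal{K}_z$ via Theorem~\ref{theo:sepacore}, use Proposition~\ref{prop:properties-set-T}(iii)--(iv) to force $\beta>0$, normalize, and then read off $\bar x\in P$ from the hypothesis $(P^*)^*=i_X(P)$. The obstacle you flag about landing in $Q^{**}$ rather than $Q$ is genuine and the paper glosses over it too --- after obtaining $\la -A\tilde x+b+z,w\ra\le 0$ for all $w\in Q^*$ it simply writes ``Consequently, $A\tilde x-b-z\in Q$'', so your caveat (reduce to $Q=\{0_Z\}$ or assume $Q=Q^{**}$) is exactly the missing clause.
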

\begin{proof}
As the inclusion $\mathcal{H}\subset \mathcal{N}$ follows from Proposition \ref{prop:MN}(i), it is left to show that $\mathcal{N} \subset \mathcal{H}$. We proceed to take arbitrarily $(z,r)\in \mathcal{N}$. By \eqref{N}, one has $z \in \dom v_{\mathcal{D}}$ and $r \geq v_{\mathcal{D}}(z)$. Proposition~\ref{prop:properties-set-T}(i) states that the set $\K_z$ from \eqref{set-T} is nonempty and convex with $\cor \K_z \neq \emptyset$. Moreover, by Proposition~\ref{prop:properties-set-T}(ii), one has $(0_{X^*}, v_{\mathcal{D}}(z)) \notin \cor \K_z$. We now apply the point-set separation argument from Theorem~\ref{theo:sepacore} to obtain
 the existence of a nonzero vector $(\bar x, \bar \alpha) \in X^{**}\times \R$ such that 
\begin{equation}\label{theo:main-algebra-equa1}
	\bar \alpha v_{\mathcal{D}}(z) \geq \langle y,\bar x \rangle + \bar \alpha \alpha \ \text{ for all }\ (y, \alpha) \in \K_z.
\end{equation} 
According to Proposition \ref{prop:properties-set-T}(iii), there exists $\epsilon >0$ such that $(0_{X^*}, v_{\mathcal{D}}(z) - \epsilon) \in  \K_z$. 
By substituting $(0_{X^*}, v_{\mathcal{D}}(z) - \epsilon)\in \K_z$ into    (\ref{theo:main-algebra-equa1}), we have
\begin{equation*}
	\bar \alpha v_{\mathcal{D}}(z) \geq  \bar \alpha \left(v_{\mathcal{D}}(z) - \epsilon\right),
\end{equation*}  
which implies that $\bar \alpha \geq 0$. If $\bar \alpha = 0$, then by substituting $\bar \alpha = 0$ into (\ref{theo:main-algebra-equa1}) we get \begin{equation}\label{theo:main-algebra-equa2}
 	0 \geq \langle y,\bar x \rangle \ \text{ for all }\ (y, \alpha) \in \K_z.
\end{equation}
In fact, \eqref{theo:main-algebra-equa2} leads to $\ox =0_{X^{**}}$. To see this, pick any $y' \in X^*$. Thanks to Proposition~\ref{prop:properties-set-T}(iv), there exists $\delta >0$  such that $$\left(\delta y', \langle b + z, \bar w \rangle\right) \in  \K_z\quad \text{and}\quad \left(- \delta y', \langle b + z, \bar w \rangle\right) \in \K_z.$$ This together with (\ref{theo:main-algebra-equa2}) gives rise to $ \langle y',\bar x \rangle = 0$. Since this relationship is true for any $y' \in X^*$, it follows that $ \bar x = 0_{X^{**}}$, contrary to $(\bar x, \bar \alpha) \neq \left(0_{X^{**}}, 0\right)$. 
Hence, $\bar \alpha > 0$ and we may assume without loss of generality that $\bar \alpha = 1$. Now (\ref{theo:main-algebra-equa1}) is equivalent to
\begin{equation}\label{theo:main-algebra-equa3}
	 v_{\mathcal{D}}(z) \geq  \langle y,\bar x \rangle + \alpha \ \text{ for all }\  (y, \alpha) \in \K_z.
\end{equation}

We proceed to show that $\bar x \in (P^{*})^*$ by fixing $y \in P^*$, $w \in Q^*$. Take arbitrarily $\lambda,\eta>0$. As $\lambda y \in P^*$, we have $$ -A^*(\eta w)  + c - \lambda y\in -A^*(\eta w)  + c - P^*,$$
which deduces $\big(-A^*(\eta w)  + c - \lambda y, \langle b + z,  \eta w \rangle\big) \in \K_z$. Combining this with (\ref{theo:main-algebra-equa3}), we have 
\begin{equation}\label{theo:main-algebra-equa4}
	 v_{\mathcal{D}}(z) \geq   \langle -A^*(\eta w)  + c - \lambda y,\bar x \rangle + \langle b + z,  \eta w \rangle \ \text{ for all }\lambda,\eta>0.	
\end{equation} 
Specifically at $\eta=1$, \eqref{theo:main-algebra-equa4} yields
\begin{equation*}
	\dfrac{1}{\lambda} v_{\mathcal{D}}(z) \geq   \dfrac{1}{\lambda} \langle  -A^*w  + c,\bar x \rangle -\langle y,\bar x \rangle + \dfrac{1}{\lambda} \langle b + z,  w \rangle \ \text{ for all }\ \lambda >0.
\end{equation*} 
Letting $\lambda \longrightarrow \infty$, we get $\langle y,\bar x \rangle \geq 0$, which holds for any $ y \in P^*$. Thus $\bar x \in (P^{*})^* = i_X(P)$. Consequently, $\ox = i_X(\tilde{x})$ for some $\tilde{x}\in P$.

On the other hand, \eqref{theo:main-algebra-equa4} asserts in the case $\lambda=1$ that
\begin{equation*}
	\dfrac{1}{\eta} v_{\mathcal{D}}(z) \geq  \langle - A\tilde{x} + b + z,  w \rangle   + \dfrac{1}{\eta}\langle c - y,\bar x \rangle \ \text{ for all }\ \eta >0.
\end{equation*} 
By letting $\eta \longrightarrow \infty$, one gets $\langle - A\tilde{x} + b + z,  w \rangle \leq 0$ for arbitrary $w\in Q^*$. Consequently, $ A\tilde{x} - b - z\in Q$.
Moreover, by setting $ y = 0_{X^*}$ and $w = 0_{Z^*}$ in (\ref{theo:main-algebra-equa4}) we arrive at 
$r\ge v_{\mathcal{D}}(z) \geq \langle \tilde{x},  c  \rangle$. 
From \eqref{Definition_H}, it is now clear that $(z,r)\in \mathcal{H}$. The proof is complete.
\end{proof}

Similarly to Proposition \ref{prop:properties-set-T}, one easily proves the following assertions.

\begin{proposition}\label{prop:properties-set-U}
Let $y \in \dom v_{\mathcal{P}}$. Define 
\begin{equation}\label{U}
\mathcal{H}_y:=\displaystyle\bigcup_{x\,\in\, P}\big((Ax-b-Q)\times[\langle x,c-y\rangle,+\infty[\big).
\end{equation}
Then we have the following assertions:
\begin{enumerate}
\item[{\rm (i)}] $\mathcal{H}_y$ is a nonempty convex subset of $Z\times \R$ and there exists $\ox\in P$ such that
\begin{equation}\label{QtimesinU}
   -Q \times [\la \ox,c-y \ra , + \infty[\, \subset\, \mathcal{H}_y. 
\end{equation}
Consequently,
\begin{equation}\label{aa}
   (-\cor Q)\, \times\, ]\langle\bar{x}, c-y\rangle,+\infty[ \,\,\subset\, \cor \mathcal{H}_y,
\end{equation}
\item[{\rm (ii)}] $(0_{Z}, v_{\mathcal{P}}(y)) \notin \cor \mathcal{H}_y$,
\item[{\rm (iii)}] For any $\epsilon>0$, we have  $(0_{Z}, v_{\mathcal{P}}(y) + \epsilon) \in  \mathcal{H}_y$,
\item[{\rm (iv)}] Suppose that 
\begin{equation}\label{condition-core-P}
    \exists\, \ox \in P \text{ such that } A\ox - b \in \cor Q.
\end{equation}
Then for any $z \in Z$ there exists $\delta_0 >0$  such that  $\left(\delta z, \langle \ox,c-y \rangle\right) \in  \mathcal{H}_y$ whenever $|\delta|\leq\delta_0$.
\end{enumerate}    
\end{proposition}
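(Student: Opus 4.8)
The plan is to mirror, line for line, the proof of Proposition~\ref{prop:properties-set-T}, the only subtlety being that $\mathcal{H}_y$ is an \emph{epigraphical} perturbation (it carries the half-line $[\langle x,c-y\rangle,+\infty[$) whereas $\mathcal{K}_z$ was hypographical, and that $v_{\mathcal{P}}$ is an \emph{infimum} while $v_{\mathcal{D}}$ was a supremum; consequently the relevant inequalities are reversed and the distinguished direction in $Z\times\R$ points downward rather than upward.

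For {\rm (i)}: the standing consistency assumption supplies $\bar x\in F(\mathcal{P})$, i.e. $\bar x\in P$ with $A\bar x-b\in Q$, whence $0_Z\in A\bar x-b-Q$ and $(0_Z,\langle\bar x,c-y\rangle)\in\mathcal{H}_y$, so $\mathcal{H}_y\neq\emptyset$. Convexity follows exactly as for \eqref{H-Pinter}: $\mathcal{H}_y$ is the image of the convex set $P\times Q\times\R_+$ under the affine map $(x,q,\alpha)\mapsto(Ax-q,\langle x,c-y\rangle+\alpha)-(b,0)$. For the cone inclusion, fix $q\in Q$; since $Q$ is a convex cone, $A\bar x-b+q\in Q$, so $-q=(A\bar x-b)-(A\bar x-b+q)\in A\bar x-b-Q$, giving $(-q,\alpha)\in\mathcal{H}_y$ for every $\alpha\ge\langle\bar x,c-y\rangle$; letting $q$ range over $Q$ yields \eqref{QtimesinU}. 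Applying Lemma~\ref{lem:core} together with the elementary monotonicity of $\cor$ with respect to inclusion then produces \eqref{aa}.

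For {\rm (ii)}: if $(0_Z,v_{\mathcal{P}}(y))\in\cor\mathcal{H}_y$, then testing the direction $(0_Z,-1)$ produces $\epsilon>0$ with $(0_Z,v_{\mathcal{P}}(y)-\epsilon)\in\mathcal{H}_y$, so \eqref{U} yields $\tilde x\in P$ with $A\tilde x-b\in Q$ and $\langle\tilde x,c-y\rangle\le v_{\mathcal{P}}(y)-\epsilon$, contradicting the definition of $v_{\mathcal{P}}(y)$ in \eqref{Pz}. For {\rm (iii)}: the infimum defining $v_{\mathcal{P}}(y)$ gives, for each $\epsilon>0$, some $x\in F(\mathcal{P})$ with $\langle x,c-y\rangle<v_{\mathcal{P}}(y)+\epsilon$, whence $(0_Z,v_{\mathcal{P}}(y)+\epsilon)\in(Ax-b-Q)\times[\langle x,c-y\rangle,+\infty[\subset\mathcal{H}_y$. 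For {\rm (iv)}: under \eqref{condition-core-P} with witness $\bar x$, apply the representation \eqref{eq:core} of $\cor Q$ to the direction $z$ at the point $A\bar x-b$, obtaining $\delta_0>0$ with $A\bar x-b-\delta z\in Q$ for all $|\delta|\le\delta_0$; then $\delta z=(A\bar x-b)-(A\bar x-b-\delta z)\in A\bar x-b-Q$, so $(\delta z,\langle\bar x,c-y\rangle)\in\mathcal{H}_y$ as claimed.

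Since each step is the verbatim dual of the corresponding step in Proposition~\ref{prop:properties-set-T}, there is no genuine obstacle here; the only point demanding attention is the bookkeeping of signs — choosing the downward direction $(0_Z,-1)$ in {\rm (ii)} and the correct sign convention for $\delta z$ in {\rm (iv)} — together with the observation that $\cor$ is monotone under inclusion, which is what licenses passing from \eqref{QtimesinU} to \eqref{aa}.
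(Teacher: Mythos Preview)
Your proposal is correct and follows exactly the route the paper intends: since the paper's own ``proof'' of Proposition~\ref{prop:properties-set-U} is merely the sentence ``Similarly to Proposition~\ref{prop:properties-set-T}, one easily proves the following assertions,'' your line-by-line dualization of that proof---with the sign bookkeeping you flag in parts {\rm (ii)} and {\rm (iv)} and the monotonicity of $\cor$ in part {\rm (i)}---is precisely what is being asked for. There is nothing to add.
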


\begin{theorem}\label{theo:main-algebra*}
If \eqref{condition-core-P} holds, then $\mathcal{K}=\mathcal{M}$.
\end{theorem}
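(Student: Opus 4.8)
**Plan of proof for Theorem \ref{theo:main-algebra*}.**

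The plan is to mirror the proof of Theorem \ref{theo:main-algebra} verbatim, interchanging the roles of primal and dual throughout: $\mathcal{K}$ plays the role of $\mathcal{H}$, $\mathcal{M}=\hypo v_{\mathcal{P}}$ replaces $\mathcal{N}=\epi v_{\mathcal{D}}$, the auxiliary set $\mathcal{H}_y$ from \eqref{U} replaces $\mathcal{K}_z$ from \eqref{set-T}, the core condition \eqref{condition-core-P} replaces \eqref{condition-core-Pstar}, and the separating functional now lives in $W\times\R$ rather than $X^{**}\times\R$. First I would note the easy inclusion $\mathcal{K}\subset\mathcal{M}$ from Proposition \ref{prop:MN}(i), so only $\mathcal{M}\subset\mathcal{K}$ needs work. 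Fix $(y,r)\in\mathcal{M}$, so by \eqref{M} one has $y\in\dom v_{\mathcal{P}}$ and $r\le v_{\mathcal{P}}(y)$. Proposition \ref{prop:properties-set-U}(i) guarantees that $\mathcal{H}_y$ is nonempty, convex, and has nonempty algebraic core (this is exactly where \eqref{condition-core-P} enters, via \eqref{aa}), while Proposition \ref{prop:properties-set-U}(ii) gives $(0_Z,v_{\mathcal{P}}(y))\notin\cor\mathcal{H}_y$. The point-set separation theorem (Theorem \ref{theo:sepacore}) then yields a nonzero $(\bar w,\bar\alpha)\in W\times\R$ with
\begin{equation*}
\bar\alpha\, v_{\mathcal{P}}(y)\le \langle z,\bar w\rangle + \bar\alpha\,\alpha\quad\text{for all }(z,\alpha)\in\mathcal{H}_y,
\end{equation*}
where the sense of the inequality is opposite to \eqref{theo:main-algebra-equa1} because $v_{\mathcal{P}}$ is concave and we are separating from above (hypograph rather than epigraph).

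Next I would pin down the sign of $\bar\alpha$. Using Proposition \ref{prop:properties-set-U}(iii) — for every $\epsilon>0$, $(0_Z,v_{\mathcal{P}}(y)+\epsilon)\in\mathcal{H}_y$ — and substituting into the separation inequality forces $\bar\alpha\le 0$. To exclude $\bar\alpha=0$: if it were zero, the inequality would read $0\le\langle z,\bar w\rangle$ for all $(z,\alpha)\in\mathcal{H}_y$; Proposition \ref{prop:properties-set-U}(iv) provides, for each $z'\in Z$, a $\delta>0$ with both $(\delta z',\langle\bar x,c-y\rangle)$ and $(-\delta z',\langle\bar x,c-y\rangle)$ in $\mathcal{H}_y$, hence $\langle z',\bar w\rangle=0$ for all $z'\in Z$; since $(Z,W)$ is a separated dual pair this gives $\bar w=0_W$, contradicting $(\bar w,\bar\alpha)\ne(0_W,0)$. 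So $\bar\alpha<0$, and after normalizing we may take $\bar\alpha=-1$, turning the inequality into
\begin{equation*}
v_{\mathcal{P}}(y)\ge \langle z,\bar w\rangle + \alpha\quad\text{for all }(z,\alpha)\in\mathcal{H}_y.
\end{equation*}

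Finally I would extract feasibility and optimality of $\bar w$ for the $y$-perturbed dual. Feeding in the generic element $\big(Ax-b-q,\langle x,c-y\rangle\big)\in\mathcal{H}_y$ (for $x\in P$, $q\in Q$) gives
\begin{equation*}
v_{\mathcal{P}}(y)\ge \langle Ax-b-q,\bar w\rangle + \langle x,c-y\rangle\quad\text{for all }x\in P,\ q\in Q;
\end{equation*}
then the usual cone-scaling trick (replace $q$ by $\lambda q$, divide by $\lambda$, let $\lambda\to\infty$) yields $\langle q,\bar w\rangle\ge 0$ for all $q\in Q$, i.e.\ $\bar w\in Q^*$; replacing $x$ by $\lambda x$ and letting $\lambda\to\infty$ yields $\langle x,-A^*\bar w+c-y\rangle\ge 0$ for all $x\in P$, i.e.\ $-A^*\bar w+c-y\in P^*$; and setting $x=0_X$, $q=0_Z$ gives $\langle b,\bar w\rangle\ge v_{\mathcal{P}}(y)\ge r$. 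Since $\bar w\in Q^*$ with $-A^*\bar w+c-y\in P^*$ and $\langle b,\bar w\rangle\ge r$, the definition \eqref{Definition_K} of $\mathcal{K}$ shows $(y,r)\in\mathcal{K}$, completing the proof. The only genuinely delicate point is getting the direction of every inequality right throughout the concave/hypograph dualization — there is no new mathematical obstacle, since all the structural inputs have already been packaged into Proposition \ref{prop:properties-set-U}; note in particular that no analogue of the hypothesis $(P^*)^*=i_X(P)$ is needed here, because the separating functional already lives in the space $W$ paired with $Z$, so the representation step is automatic.
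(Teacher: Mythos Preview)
Your plan is exactly the paper's own argument: separate $(0_Z,v_{\mathcal{P}}(y))$ from $\mathcal{H}_y$ via Theorem~\ref{theo:sepacore}, use Proposition~\ref{prop:properties-set-U}(iii) to fix the sign of $\bar\alpha$, use (iv) together with the separated dual pair $(Z,Z^*)$ to exclude $\bar\alpha=0$, and then run the cone-scaling trick on $x$ and $q$ to extract $\bar w\in Q^*$, $-A^*\bar w+c-y\in P^*$, and $\langle b,\bar w\rangle\ge v_{\mathcal{P}}(y)\ge r$. Your closing remark that no analogue of $(P^*)^*=i_X(P)$ is needed is also correct and matches the paper: the separating functional already lives in $Z^*=W$, so no bidual identification is required.

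That said, the signs in your displayed inequalities are inconsistent. From your separation $\bar\alpha\,v_{\mathcal{P}}(y)\le\langle z,\bar w\rangle+\bar\alpha\,\alpha$, substituting $(0_Z,v_{\mathcal{P}}(y)+\epsilon)$ gives $\bar\alpha\ge 0$, not $\bar\alpha\le 0$; normalizing then yields $v_{\mathcal{P}}(y)\le\langle z,\bar w\rangle+\alpha$, and the scaling arguments produce $-\bar w\in Q^*$, $-A^*(-\bar w)+c-y\in P^*$, $\langle b,-\bar w\rangle\ge v_{\mathcal{P}}(y)$ --- so the dual variable is $-\bar w$, not $\bar w$. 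The paper avoids this by writing the separation as $\bar\alpha\,v_{\mathcal{P}}(y)\ge\langle z,\bar z^*\rangle+\bar\alpha\,\alpha$, which cleanly gives $\bar\alpha\le 0$, normalization $\bar\alpha=-1$, and the inequality $v_{\mathcal{P}}(y)\le -\langle z,\bar z^*\rangle+\alpha$. You anticipated that sign-tracking is the only delicate point, and indeed it is the only thing to fix; once the direction of the separation is chosen consistently, the remainder goes through verbatim.
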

\begin{proof}
The proof of this theorem bases on the arguments similar to that of Theorem \ref{theo:main-algebra}. Let us take arbitrarily $(y, r) \in \mathcal{M}$ and show that $(y,r)\in \mathcal{K}$. By the definition of $\mathcal{M}$, one has $y \in \operatorname{dom} v_{\mathcal{P}}$ and $r \leq v_{\mathcal{P}}(y)$.

Observe by Proposition \ref{prop:properties-set-U}(i) that $\mathcal{H}_y$ from \eqref{U} is a nonempty convex subset in $Z \times \mathbb{R}$. Further, $\cor \mathcal{H}_y \neq \emptyset$ and $\left(0_Z, v_{\mathcal{P}}(y)\right) \notin \operatorname{core} \mathcal{H}_y$. Theorem \ref{theo:sepacore} now shows that there exists $\left(\bar{z}^*, \bar{\alpha}\right) \in Z^* \times \mathbb{R}$ such that $\left(\bar{z}^*, \bar{\alpha}\right) \neq\left(0_{Z^*}, 0\right)$ and
\begin{equation}\label{separate-from-U}
\bar{\alpha} v_{\mathcal{P}}(y) \geq\left\langle z, \bar{z}^*\right\rangle+\bar{\alpha}\alpha \quad \text{ for all }(z, \alpha) \in \mathcal{H}_y.
\end{equation}
Proposition \ref{prop:properties-set-U}(iii) ensures that $\left(0_Z, v_{\mathcal{P}}(y)+\epsilon\right) \in \mathcal{H}_y$ for any $\e>0$. This together with \eqref{separate-from-U} gives
$$
\bar{\alpha} v_{\mathcal{P}}(y) \geq \bar{\alpha}\left(v_{\mathcal{P}}(y)+\epsilon\right),
$$
which implies that $\bar{\alpha} \leq 0$. We proceed to show that $\bar{\alpha}<0$. To obtain a contradiction, suppose that $\bar{\alpha}=0$. Then \eqref{separate-from-U} becomes
$$
0 \geq\left\langle z, \bar{z}^*\right\rangle \quad \forall(z, \alpha) \in \mathcal{H}_y.
$$
By using the same method as in the proof of Theorem \ref{theo:main-algebra} we get that $\bar{z}^*=0_{Z^*}$, contrary to $\left(\bar{z}^*, \bar{\alpha}\right) \neq\left(0_{Z^*}, 0\right)$. Therefore, $\bar{\alpha}<0$. Without loss of generality we can assume $\bar{\alpha}=-1$, and so \eqref{separate-from-U} can be rewritten as
\begin{equation}\label{normalize-a=1}
v_{\mathcal{P}}(y) \leq -\left\langle z, \bar{z}^*\right\rangle+\alpha \quad \text{ for all }(z, \alpha) \in \mathcal{H}_y .
\end{equation}
We next prove that $\bar{z}^* \in Q^*$. To do this, fix $z \in Q$. Taking arbitrarily $\eta>0$. Since $Q$ is a cone, it follows from \eqref{QtimesinU} that $(-\eta z,\langle\bar{x}, c-y\rangle) \in \mathcal{H}_y$. From this and \eqref{normalize-a=1} we have
$$
v_{\mathcal{P}}(y) \leq\left\langle\eta z, \bar{z}^*\right\rangle+\langle\bar{x}, c-y\rangle\quad \text { for all } \eta>0,
$$
which yields
$$
\frac{1}{\eta} v_{\mathcal{P}}(y) \leq\left\langle z, \bar{z}^*\right\rangle+\frac{1}{\eta}\langle\bar{x}, c-y\rangle .
$$
Letting $\eta \longrightarrow \infty$ gives $\left\langle z, \bar{z}^*\right\rangle \geq 0$. As this inequality holds for any $z \in Q$, we have $\bar{z}^* \in Q^*$. 

Our next claim is that $-A^* \bar{z}^*+c-y \in P^*$. Let us fix an arbitrary $x \in P$. It is clear that $\big(A(\lambda x)-b,\langle\lambda x, c-y\rangle\big) \in \mathcal{H}_y$ for any $\lambda>0$ because $P$ is a cone. Combining this with \eqref{normalize-a=1} we obtain
\begin{equation}\label{eq:516}
v_{\mathcal{P}}(y) \leq\left\langle-A(\lambda x)+b, \bar{z}^*\right\rangle+\langle\lambda x, c-y\rangle \text { for all } \lambda>0,
\end{equation}
which implies that
$$
\frac{1}{\lambda} v_{\mathcal{P}}(y) \leq\left\langle x,-A^* \bar{z}^*+c-y\right\rangle+\frac{1}{\lambda}\left\langle b, \bar{z}^*\right\rangle .
$$
By letting $\lambda \longrightarrow \infty$, we have $\left\langle x,-A^* \bar{z}^*+c-y\right\rangle \geq 0$. It is worth noting that this inequality holds for any $x \in P$. Hence, $-A^* \bar{z}^*+c-y \in P^*$, or $y \in-A^* \bar{z}^*+c-P^*$. On the other hand, substituting $x=0_X$ into \eqref{eq:516} we get $v_{\mathcal{P}}(y) \leq\left\langle b, \bar{z}^*\right\rangle$. This completes the proof.
\end{proof}

\section{Topological characterizations of (D) and (D$^*$)}\label{sec:topo}
So far, we've limited our exploration to a purely algebraic context, aiming to minimize the incorporation of additional structure. One consequence we face is the necessity to formulate our dual problem within the algebraic dual of the primal space, which seems to be a difficult space to work with. To facilitate this process and, in addition, to employ powerful tools from \textit{functional analysis} such as separation theorems and continuous extension theorems, we are inclined to endow the spaces in consideration with topological structures.

The following concepts and related facts can be found from any book focusing on topological vector spaces, e.g., Robertson and Robertson \cite{Robertson64} and Schaefer \cite{Schaefer71}. Given a \textit{separated dual pair} of vector spaces $(X,Y)$, the notion of which is defined in Section \ref{sec:prelim}. The collection of sets
\begin{equation*}
    \left\{x\in X: \left| \la x,y\ra \right|\le 1 \quad \forall y\in S\right\}, \text{ where }S \text{ is a } \text{finite subset of }Y,
\end{equation*}
forms a base of neighborhoods of the origin in $X$. The resulting topology is then called the \textit{weak topology} on $X$, denoted by $\sigma (X,Y)$. A topological vector space is \textit{locally convex} if it has a base of convex sets. A locally convex topology on $X$ is \textit{consistent} with the dual pair $(X,Y)$ if $Y$ is exactly the dual of $X$ with respect to this topology. Among such topologies on $X$, $\sigma (X,Y)$ is the coarsest one while the \textit{Mackey topology}, denoted by $\tau (X,Y)$, is the finest one. One may find the construction of the Mackey topology from \cite[p.~131]{Schaefer71}. When $X$ is a normed space and $Y$ coincides with its topological dual, the weak topology $\sigma (X,Y)$ and Mackey topology $\tau (X,Y)$ are exactly the usual weak topology $\sigma (X,X^*)$ and norm/strong topology on $X$, respectively.

Let $(Z, W)$ be another separated dual pair of vector spaces. Let $\sigma(X,Y)$ and $\sigma(Z,W)$ be the weak topologies on $X$ and $Z$, respectively, and let $A$ be a $\sigma(X, Y)-\sigma(Z, W)$ continuous linear map from $X$ to $Z$. Using \cite[Proposition~4,~p.~37]{AndersonNash87}, we know that the \textit{adjoint (transpose)} of $A$ is the linear map
$A^*:W\rightarrow Y$ defined via the condition
\begin{equation}\label{adjoint}
\langle x, A^*w\rangle=\langle Ax,w\rangle\quad \forall x\in X,\;\forall w\in W.
\end{equation}
It is well known \cite[Chapter~IV,~Section~2]{Schaefer71} that $A^*$ is $\sigma(W, Z)-\sigma(Y, X)$ continuous, where $\sigma(W, Z)$ and $\sigma(Y, X)$ are weak topologies on $W$ and $Y$, respectively. We adopt the convention that every topological notion being used in the sequel is understood with respect to the appropriate weak topology.

As $\mathcal{H}$ (resp. $\mathcal{K}$) is convex, recall by \cite[3.1,~p.~130]{Schaefer71} that the closure of $\mathcal{H}$ (resp. $\mathcal{K}$) is the same for any locally convex topology on $Z\times \R$ (resp. $Y\times \R$) consistent with the pair $(Z\times \R,W\times \R)$ (resp. $(Y\times \R,X\times \R)$). In particular, the closures of $\mathcal{H}$ and $\mathcal{K}$ with respect to the weak topology and Mackey topology are the same. Surprisingly, when $Z$ and $Y$ are endowed with their weak topologies, the weak closure $\overline{\mathcal{H}}$ becomes exactly $\mathcal{N}$, and $\overline{\mathcal{K}}$ becomes $\mathcal{M}$. 

Observe also that in our current topological setting, the function $v_{\mathcal{P}}$ is \textit{upper semicontinuous}, i.e., each upper level set $\{y\in Y: v_{\mathcal{P}}(y)\ge \alpha\}$ is closed for each $\alpha \in \R$; and the function $v_{\mathcal{D}}$ is \textit{lower semicontinuous}, i.e., each lower level set $\{z\in Z: v_{\mathcal{D}}(z)\le \alpha\}$ is closed for each $\alpha \in \R$. These observations later shed light on the closedness of $\mathcal{N}$ and $\mathcal{M}$.  As promised, we now show that $\mathcal{N}$ (resp. $\mathcal{M}$) is exactly the closure of $\mathcal{H}$ (resp. $\mathcal{K}$), which thus implies that condition {\bf (D)} (resp. {\bf (D$^*$)}) is fulfilled if and only if $\mathcal{H}$ (resp. $\mathcal{K}$) is closed with respect to consistent topologies.
\begin{theorem}
\label{lemmathesetHN}
	The following assertions hold true:
	\begin{description}
		\item[{\rm (i)}] $\overline{\mathcal{H}} = \mathcal{N}$,		
		\item[{\rm (ii)}] 	If $P,Q$ are closed, then $\overline{\mathcal{K}} = \mathcal{M}$.
	\end{description}
\end{theorem}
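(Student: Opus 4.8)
The plan is to prove each equality by establishing the two inclusions separately: the inclusions $\overline{\mathcal{H}}\subseteq\mathcal{N}$ and $\overline{\mathcal{K}}\subseteq\mathcal{M}$ come from closedness of $\mathcal{N}$ and $\mathcal{M}$, while the reverse inclusions are read off from the separation characterizations already supplied by Theorem~\ref{contra} and Theorem~\ref{contra-KM}.

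For (i) I would first note that $\mathcal{N}=\epi v_{\mathcal{D}}$ is closed in $Z\times\R$, since in the present topological setting $v_{\mathcal{D}}$ is lower semicontinuous with respect to $\sigma(Z,W)$ --- which is precisely the assertion that its epigraph is closed. Combined with $\mathcal{H}\subseteq\mathcal{N}$ from Proposition~\ref{prop:MN}(i), this yields $\overline{\mathcal{H}}\subseteq\overline{\mathcal{N}}=\mathcal{N}$. For the reverse inclusion I would argue by contraposition. Let $(z,r)\in(Z\times\R)\setminus\overline{\mathcal{H}}$. The set $\overline{\mathcal{H}}$ is closed and convex, and $Z\times\R$ equipped with $\sigma(Z\times\R,W\times\R)$ is a locally convex space whose topological dual is $W\times\R$; hence the Hahn--Banach strong separation theorem produces $(w,\beta)\in W\times\R$ and $\gamma\in\R$ with
\[
  \big\langle(z',r'),(w,\beta)\big\rangle\le\gamma<\big\langle(z,r),(w,\beta)\big\rangle\qquad\text{for all }(z',r')\in\mathcal{H}.
\]
This is exactly statement~(ii) of Theorem~\ref{contra}, so the equivalence $\mathrm{(i)}\Leftrightarrow\mathrm{(ii)}$ there gives $(z,r)\notin\mathcal{N}$; hence $\mathcal{N}\subseteq\overline{\mathcal{H}}$, proving (i).

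Part (ii) follows the same scheme with $v_{\mathcal{P}},\mathcal{K},\mathcal{M}$ and Theorem~\ref{contra-KM} in place of $v_{\mathcal{D}},\mathcal{H},\mathcal{N}$ and Theorem~\ref{contra}. Upper semicontinuity of $v_{\mathcal{P}}$ makes $\mathcal{M}=\hypo v_{\mathcal{P}}$ closed, so $\mathcal{K}\subseteq\mathcal{M}$ gives $\overline{\mathcal{K}}\subseteq\mathcal{M}$ without further hypotheses. For $\mathcal{M}\subseteq\overline{\mathcal{K}}$ the closedness of $P$ and $Q$ enters: being a closed convex cone, $P$ satisfies $P^{**}=\overline{P}=P$ by the bipolar theorem for the pair $(X,Y)$, and likewise $Q^{**}=Q$ for $(Z,W)$ --- in particular the hypothesis $P^{**}=P$ required for $\mathrm{(iii)}\Rightarrow\mathrm{(i)}$ in Theorem~\ref{contra-KM} holds. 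Given $(y,r)\notin\overline{\mathcal{K}}$, strong separation in $Y\times\R$ (whose dual under $\sigma(Y\times\R,X\times\R)$ is $X\times\R$) yields $(x,\beta)\in X\times\R$ and $\gamma\in\R$ realizing statement~(ii) of Theorem~\ref{contra-KM}, and the chain $\mathrm{(ii)}\Rightarrow\mathrm{(iii)}\Rightarrow\mathrm{(i)}$ forces $(y,r)\notin\mathcal{M}$.

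The mechanical parts --- closedness of $\mathcal{N},\mathcal{M}$ from semicontinuity of $v_{\mathcal{D}},v_{\mathcal{P}}$, the identification of the weak duals of $Z\times\R$ and $Y\times\R$, and the bipolar identities --- are standard. The substantive content is the reverse inclusion, and there the essential work is already done: Theorem~\ref{contra} and Theorem~\ref{contra-KM} encode the complements of $\mathcal{N}$ and $\mathcal{M}$ as exactly the sets of points that can be separated from $\mathcal{H}$ and $\mathcal{K}$ by a linear functional, so the only role of a topology is to guarantee, via Hahn--Banach, that a weakly closed convex set can be strictly separated from each exterior point. I expect the reliance on the bipolar theorem (equivalently, on $P$ and $Q$ being closed) in part (ii) to be the point worth isolating, since it is precisely the ingredient the purely algebraic Theorem~\ref{contra-KM} cannot furnish on its own.
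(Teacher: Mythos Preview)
Your proposal is correct and follows essentially the same route as the paper: closedness of $\mathcal{N}$ (resp.\ $\mathcal{M}$) from lower (resp.\ upper) semicontinuity gives one inclusion, and Hahn--Banach separation combined with Theorem~\ref{contra} (resp.\ Theorem~\ref{contra-KM}) gives the other. You are in fact more explicit than the paper in part~(ii), correctly isolating that the closedness of $P$ enters via the bipolar theorem to secure the hypothesis $P^{**}=P$ of Theorem~\ref{contra-KM}, whereas the paper simply says the proof ``follows the same pattern''.
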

\begin{proof}
(i) First note that $\mathcal{N}=\epi\, (v_{\mathcal{D}})$ is closed with respect to the $\sigma (Z\times \R,W\times \R)$ topology since $v_{\mathcal{D}}$ is lower semicontinuous. Then, the inclusion $\overline{\mathcal{H}} \subset \mathcal{N}$ follows from Proposition \ref{prop:MN}(i) and the closedness of $\mathcal{N}$.

For the converse inclusion, let $(z,r)\in \mathcal{N}$ such that $(z,r)\notin \overline{\mathcal{H}}$. Then by the separation theorem \cite[Proposition~6,~p.~37]{AndersonNash87} there exists $(w,\beta)\in W \times \R$ such that 
\begin{equation*}
 \la (z,r),(w,\beta)\ra > \la (z',r'),(w,\beta)\ra \quad \forall (z',r')\in \mathcal{H},
\end{equation*}
which, by Theorem \ref{contra}, contradicts $(z,r)\in \mathcal{N}$. Thus we conclude that $(z,r)\in \overline{\mathcal{H}}$. Therefore, $\overline{\mathcal{H}} = \mathcal{N}$. 

The proof of (ii) follows the same pattern as that of (i) by interchanging the roles of primal and dual settings.
\end{proof}	

The following example serves as an illustration for the identity $\overline{\mathcal{H}}=\mathcal{N}$ which is shown previously.

\begin{example}\label{ex:6.1}
\rm 
We consider again the parametric version of Gale's example presented in Example \ref{exam:Gale1}. Therein, it is explicitly shown that
\begin{equation*}
\begin{aligned}
    \mathcal{N}=\bigcup_{z\,\in\, \mathrm{dom}\, v_{\mathcal{D}}}(\{z\}\times [v_{\mathcal{D}}(z),+\infty[)= \big\{(t_1+t_2-\alpha,t_2-\beta,t_3):t_1,t_2,t_3\ge 0\big\}.
\end{aligned}
\end{equation*}
On the other hand, the closure $\overline{\mathcal{H}}$ of $\mathcal{H}=\bigcup_{x\,\in\, P}\big((Ax-b-\{0_{\mathbb{R}^2}\})\times [\la x,c\ra ,+\infty[\big)$ is calculated in \cite[Example~3.1]{KhanhMoTrinh} as 
\begin{equation}\label{Hbar}
    \overline{\mathcal{H}}=\big\{(t_1+t_2-\alpha,t_2-\beta,t_3):t_1,t_2,t_3\ge 0\big\}
\end{equation}
which is exactly $\mathcal{N}$. This is in accordance with the result of Theorem \ref{lemmathesetHN}.
\end{example}

\section{Concluding Remarks}\label{sec:con}
In this paper, we have established algebraic versions of Farkas lemma and strong duality results in the general framework of infinite-dimensional conic linear programming. By analyzing the algebraic properties of perturbed optimal value functions and their associated hypergraphical and epigraphical sets, we have developed novel perturbed Farkas-type lemmas, which, in turn, provided complete characterizations of the ``zero duality gap'' in a very broad setting. By means of Section \ref{sec:algebra} and Section \ref{sec:topo}, our work has shown to provide a cohesive framework for understanding the theory of Farkas lemma and strong duality results.

\section*{Acknowledgments}
This research is funded by Ho Chi Minh City Open University under the grant number E2024.08.2.

\end{document}